\newtheorem{thm}{Theorem}
\newtheorem*{thm*}{Theorem}
\theoremstyle{plain}
\newtheorem{Theorem}{\bf Theorem}[section]
\newtheorem{Lemma}{\bf Lemma}[section]
\newtheorem{Proposition}{\bf Proposition}[section]
\newtheorem{Corollary}{\bf Corollary}[section]
\newtheorem{Remark}{\bf Remark}[section]
\newtheorem{Example}{\bf Example}[section]
\newtheorem{Definition}{\bf Definition}[section]
\newenvironment{lemma}{\begin{Lemma}$\!\!\!$}{\end{Lemma}}
\newenvironment{proposition}{\begin{Proposition}$\!\!\!$}{\end{Proposition}}
\newenvironment{corollary}{\begin{Corollary}$\!\!\!$}{\end{Corollary}}
\newenvironment{remark}{\begin{Remark}$\!\!\!$}{\end{Remark}}
\newenvironment{definition}{\begin{Definition}$\!\!\!$}{\end{Definition}}
\numberwithin{equation}{section}
\newcommand{\lesi}{\lesssim}
\newcommand{\supp}{\operatorname{supp}}
\newcommand{\f}{\frac}
\newcommand{\vc}{\infty}
\newcommand{\di}{\mathop{}\!\mathrm{d}}
\newcommand{\HH}{\mathbb H}
\newcommand{\dist}{\mathsf{d}}
\DeclareMathOperator{\Dist}{dist}
\begin{document}

\title[semilinear parabolic equations  in metric spaces]
  {Existence of solutions semilinear parabolic equations with singular initial data in the Heisenberg group}

\authors

\author{The Anh Bui}
\address[The Anh Bui]
	{Department of Mathematics of Statistics, Macquarie University, NSW 2109,
	Australia.}
\email{the.bui@mq.edu.au}

	\author{Kotaro Hisa}
	\address[Kotaro Hisa]{Graduate School of Mathematical Sciences, The University of Tokyo, 3-8-1 Komaba,
Meguro-ku, Tokyo 153-8914, Japan.}
	\email{hisak@ms.u-tokyo.ac.jp}


	\subjclass[2020]{Primary 35A01, Secondary 35K15, 35R03, 35R11}
	
	\keywords{Semilinear heat equation, Heisenberg group, Global existence, Lifespan estimates, Optimal singularities,  fractional Laplacian}

\arraycolsep=1pt

\begin{abstract}
In this paper we obtain necessary conditions and sufficient conditions on the initial data for 
the solvability of fractional semilinear heat equations with power nonlinearities in the Heisenberg group $\mathbb{H}^N$.
Using these conditions, 
we can prove that  $1+2/Q$ separates the ranges of exponents of nonlinearities for the global-in-time solvability of the Cauchy problem (so-called the Fujita-exponent), where $Q=2N+2$ is the homogeneous dimension of $\mathbb{H}^N$,
and identify the optimal strength of the singularity of the initial data for the local-in-time solvability.
Furthermore, our conditions lead sharp estimates of the life span of  solutions with nonnegative initial data having a polynomial decay at the space infinity.
\end{abstract}

\maketitle

\tableofcontents

\section{Introduction.}

\subsection{Heisenberg group $\mathbb{H}^N$.}

This paper is concerned with nonnegative solutions of the fractional semilinear heat equation in the Heisenberg group 
\begin{equation}
\label{eq:Fujita}
\partial_t u +( -\Delta_{\mathbb{H}})^{\frac{\alpha}{2}} u =  u^p,\quad  \eta\in \mathbb{H}^N,  \,\,t>0,
\end{equation}
with the initial data
\begin{equation}
\label{eq:Fujitaini}
u(\cdot,0) =  \mu   \quad \mbox{in} \quad \mathbb{H}^N,
\end{equation}
%
where  $N\geq1$, $\alpha\in(0,2]$, $p>1$, and $\mu$ is a nonnegative Radon measure on $\mathbb{H}^{N}$.
Here, $\Delta_\mathbb{H}$ is the sub-Laplacian on the Heisenberg group $\mathbb{H}^N$ and 
 $(-\Delta_\mathbb{H})^{\alpha/2}$ denotes the fractional power of $-\Delta_\mathbb{H}$ on $\HH^N$.
For the exact definition of $(-\Delta_{\HH})^{\alpha/2}$,
see \eqref{eq:defofFSL} below.
 
In this paper we show that every nonnegative solution of \eqref{eq:Fujita} has a unique  nonnegative Radon
measure in $\mathbb{H}^N$ as the initial trace and study qualitative properties of it. Furthermore, we give
sufficient conditions for the solvability of  problem \eqref{eq:Fujita} with \eqref{eq:Fujitaini} and obtain sharp estimates
of the life span of  solutions with small initial data.
Throughout of this paper, for $d\ge1$ and $\alpha\in (0,2]$  denote
\[
p_{\alpha, d} := 1 + \frac{\alpha}{d}.
\]

The Heisenberg group is the Lie group $\mathbb{H}^N = \mathbb{R}^{2N+1}$ equipped with the group law
\[
\eta \circ \eta' = (x+x', y+y', \tau + \tau' + 2(x\cdot y'- x' \cdot y)),
\]
and with the Haar measure on $\mathbb{H}^N$, 
where $\eta = (x,y,\tau)$, $\eta'=(x',y',\tau') \in \mathbb{R}^{2N+1}$, and $\cdot$ is the scalar product in $\mathbb{R}^N$.
It is well-known that the Haar measure on $\mathbb{H}^N$ coincides with the $2N+1$-dimensional Lebesgue measure $\mathcal{L}^{2N+1}$.
The identity element for $\mathbb{H}^N$ is $0$ and $\eta^{-1} = - \eta$ for all $\eta\in \mathbb{H}^N$.
The homogeneous Heisenberg norm is defined by
\[
|\eta|_{\mathbb{H}^N} = ((|x|^2+|y|^2)^2 +\tau^2)^\frac{1}{4},
\]
where $|\cdot|$ is the Euclidean norm associated to $\mathbb{R}^N$.
Then 
\[
\dist_{\mathbb{H}} (\eta,\zeta) = |\zeta^{-1} \circ \eta|_{\mathbb{H}^N}
\]
is a left-invariant distance on $\mathbb{H}^N$.
The homogeneous dimension of $\mathbb{H}^N$ is $Q= 2N+2$.
The left-invariant vector fields that span the Lie algebra are given by
\[
X_i = \partial_{x_j} -2y_i \partial_\tau, \quad Y_i =\partial_{y_i} + 2x_i \partial_\tau,
\]
for $i=1,\cdots,N$.
The Heisenberg gradient is given by 
\[
\nabla_\mathbb{H} = (X_1,\cdots,X_N,Y_1,\cdots,Y_N),
\]
and the sub-Laplacian is defined as
\begin{equation}
	\label{eq:Lap}
\begin{split}
\Delta_\mathbb{H}
&  := \sum_{i=1}^N (X_i^2 + Y_i^2)\\
& = \Delta_x + \Delta_y +4(|x|^2+|y|^2)\partial_\tau^2 + 4\sum_{i=1}^N (x_i \partial^2_{y_i,\tau} - y_i \partial^2_{x_i,\tau}),
\end{split}
\end{equation}
where $\Delta_x$ and $\Delta_y$ stand for the Laplace operators on $\mathbb{R}^N$.
In particular, the Heisenberg group $\mathbb{H}^N$ is the most typical example of  metric measure spaces with a structure different from that of $\mathbb{R}^N$.

\subsection{Semilinear heat equations.} 

We recall the solvability of \eqref{eq:Fujita} in $\mathbb{R}^N$.
For $x\in\mathbb{R}^N$ and $r>0$, set $B_{\mathbb{R}^N}(x,r) := \{y\in\mathbb{R}^N: |x-y|<r\}$.
Let us consider nonnegative solutions of the fractional semilinear equation
\begin{equation}
\label{eq:fjt}
\partial_t v +( -\Delta)^{\frac{\alpha}{2}} v =  v^q,\quad  x\in \mathbb{R}^N,  \,\,t>0,
\end{equation}
with the initial data
\begin{equation}
\label{eq:fjtini}
v(\cdot,0) =  \nu,
\end{equation}
where $N\ge 1$, $\alpha \in (0,2]$, $q>1$, and $\nu$ is a nonnegative Radon measure on $\mathbb{R}^N$.
Here,  $(-\Delta)^{\alpha/2}$ denotes the fractional power of $-\Delta$ on $\mathbb{R}^N$.

Let us consider the case of $\alpha=2$ first.
The solvability of problem \eqref{eq:fjt} with \eqref{eq:fjtini}  has been studied in many papers since the pioneering work due to Fujita \cite{F} 
(see, e.g., \cite{QS19}, which includes an extensive list of references for \eqref{eq:fjt}).
It is already known from his work and the subsequent works of Hayakawa \cite{H73},  Sugitani \cite{S75}, and Kobayashi--Sirao--Tanaka \cite{KST77} 
that $1<q\le p_{2,N}$ implies the nonexistence of any positive global-in-time solutions, while  
if $q> p_{2,N}$, problem \eqref{eq:fjt} with \eqref{eq:fjtini} possesses a positive global-in-time solution for appropriate initial data $\nu$.
For this reason, such an exponent $p_{2,N}$ is called  the {\it  Fujita-exponent}.
Note that Sugitani \cite{S75} also dealt with the fractional case $\alpha\in(0,2)$.

Among these studies, Baras--Pierre \cite{BP85} obtained necessary conditions for the solvability in the case of $\alpha=2$.
Subsequently, the second author of this paper and Ishige \cite{HI18} obtained a generalization to the fractional case $\alpha \in (0,2)$ and sufficient conditions for the solvability.
Furthermore, they showed that every nonnegative solution of \eqref{eq:fjt} has a unique nonnegatuve Radon measure on $\mathbb{R}^N$ as the initial trace
(see also \cite{IKO20}, which deals with the case where $\alpha$ is a positive even integer as well as the case of $\alpha\in(0,2]$).
More precisely, the following results have already been obtained.

\begin{itemize}
\item[(a)] Let $N\ge1$, $\alpha\in (0,2]$, and $q>1$.
Let $v$ be a nonnegative solution of \eqref{eq:fjt} in $\mathbb{R}^N\times(0,T)$, where $T\in (0,\infty)$.
Then there exists a unique nonnegative Radon measure $\nu$ on $\mathbb{R}^N$ such that
\[
\operatorname*{ess\,lim}_{t\to0^+} \int_{\mathbb{R}^N} v(y,t) \phi(y) \, \di y = \int_{\mathbb{R}^N} \phi(y) \, \di \nu(y)
\]
for all $\phi\in C_c(\mathbb{R}^N)$.
\item[(b)]  Let $\nu$ be as in assertion (a). Then $\nu$ must satisfy the following:
\begin{itemize}
\item If $1<q<p_{\alpha,N}$, then $\displaystyle{\sup_{x\in\mathbb{R}^N} \nu(B_{\mathbb{R}^N}(x,T^\frac{1}{\alpha}))\le\gamma T^{\frac{N}{\alpha}-\frac{1}{q-1}}}$;
\item If $q=p_{\alpha,N}$, then $\displaystyle{\sup_{x\in\mathbb{R}^N} \nu(B_{\mathbb{R}^N}(x,\sigma))\le\gamma \left[\log\left(e+\frac{T^{1/\alpha}}{\sigma}\right)\right]^{-{\frac{N}{\alpha}}}}$ for all $0<\sigma<T^\frac{1}{\alpha}$;
\item If $q>p_{\alpha,N}$, then $\displaystyle{\sup_{x\in\mathbb{R}^N} \nu(B_{\mathbb{R}^N}(x,\sigma))\le\gamma\sigma^{N-\frac{\alpha}{q-1}}}$ for all $0<\sigma<T^\frac{1}{\alpha}$.
\end{itemize}
Here $\gamma>0$ is a constant depending only on $N$, $\alpha$, and $q$.
\end{itemize}
From this necessary condition (b) we can find a large constant $C_*>0$ with the following property:
\begin{itemize}
\item[(c)] Problem \eqref{eq:fjt} with \eqref{eq:fjtini} possesses no local-in-time solutions if $\nu$ is a nonnegative measurable function in $\mathbb{R}^N$ satisfying
$\nu(x)\ge C_*\Phi(x)$
in a neighborhood of the origin, where 
\begin{equation*}
\Phi(x) := 
\left\{
\begin{array}{ll}
\displaystyle{|x|^{-N}\left[\log\left(e+\frac{1}{|x|}\right)\right]^{-\frac{N}{\alpha}-1}} & \mbox{if} \quad q=p_{\alpha,N},\vspace{3pt}\\
\displaystyle{|x|^{-\frac{\alpha}{q-1}}} \quad & \mbox{if} \quad q>p_{\alpha,N}.\vspace{3pt}\\
\end{array}
\right.
\end{equation*}
\end{itemize}
On the other hand, the following sufficient condition for the  solvability  has also been obtained:
\begin{itemize}
\item[(d)] There exists a small constant $c_*>0$ 
such that
if $\nu$ satisfies
$0\le\nu(x)\le c_*\Phi(x)$ in $\mathbb{R}^N$,
then problem~\eqref{eq:fjt} with \eqref{eq:fjtini} possesses a local-in-time solution.
In particular, when $p>p_{\alpha,N}$, this solution is a global-in-time one.
\end{itemize}
Note that these results show that $q=p_{\alpha,N}$ is the Fujita-exponent.
For (d), see also the papers written by Kozono--Yamazaki \cite{KY94} ,
Robinson--Sier\.{z}\k{e}ga \cite{RS13} (the case where $\alpha =2$ and $q>p_{2,N}$),
and
Ishige--Kawakami--Okabe \cite{IKO20} (the case where $\alpha\in(0,2]$ and $q\ge p_{\alpha,N}$).
The conditions (c) and (d) demonstrate that strength of the singularity at the origin of the function $\Phi$ is the critical threshold for the local-in-time solvability of problem \eqref{eq:fjt} with \eqref{eq:fjtini}.
We term such a singularity in the initial data an {\it optimal singularity} for the solvability of  problem \eqref{eq:fjt} with \eqref{eq:fjtini}.
For studies on optimal singularities, see e.g. \cites{FHIL23, FHIL24, H24,HI18, HIT23, HS24, HT21, IKO20}.

\vspace{5pt}
Let us go back to \eqref{eq:Fujita}.
For problem \eqref{eq:Fujita} with \eqref{eq:Fujitaini} in the case of $\alpha=2$, Zhang \cite{Z98} proved that $1<p<p_{2,Q}$ implies the nonexistence of any positive global-in-time solutions, while  
if $p>p_{2,Q}$, problem \eqref{eq:Fujita} with \eqref{eq:Fujitaini} possesses a positive global-in-time solution for appropriate initial data $\mu$. Later, Pohozaev--V\'eron \cite{PV00} considered more general equations in $\mathbb{H}^N$ and proved that $p= p_{2,Q}$ is included in the nonexistence case. 
Namely, we already know that $p=p_{2,Q}$ is the Fujita exponent.
Recently,   Georgiev--Palmieri \cite{GP21} obtained sharp estimates of the life span of solutions of problem \eqref{eq:Fujita} with small initial data in the case of $1<p\le p_{2,Q}$
and also proved the global-in-time solvability in the case of $p>p_{2,Q}$.
Many other studies on the global-in-time solvability in $\mathbb{H}^N$ have been undertaken, see e.g. \cites{A01, AJS15, FRT24, JKS16,P98, P99, RY22}.
We would like to emphasize that,  in the case of $\alpha\in(0,2)$, there were no previous studies on
 problem \eqref{eq:Fujita} with \eqref{eq:Fujitaini}  to our knowledge.


\vspace{5pt}
In this paper we consider \eqref{eq:Fujita} and  obtain analogous results to \cite{HI18} (i.e. assertions (a)--(d)) in the Heisenberg group $\mathbb{H}^N$.
Our conditions are optimal and, as an application, we show that our conditions can lead the sharp estimates of the life span of solutions with $\mu$ having  a polynomial decay at the space infinity.

\subsection{Notation and the definition of solutions.}\label{Semilinear heat equations}

In order to state our main results, we 
prepare some notation and formulate the definition of solutions.
For any measurable set $A\subset \mathbb{H}^{N}$, $|A|$ denotes the Haar measure of $A$.
For any $\eta \in \mathbb{H}^N$ and $r>0$, set $B(\eta,r):= \{\zeta \in \mathbb{H}^N: \dist_{\mathbb{H}} (\eta, \zeta) <r\}$. 
Furthermore, for any $L^1_{\rm loc}$ function $f$, we set 
\[
 \fint_{B(\eta,r)} f(\zeta) \, \di \zeta := \frac{1}{|B(\eta,r)|} \int_{B(\eta,r)} f(\zeta) \, \di \zeta.
\] 
Following \cite{Folland}, for $\alpha\in (0,2)$ we define the fractional sub-Laplacian by
\begin{equation}
\label{eq:defofFSL}
(-\Delta_{\HH})^{\frac{\alpha}{2}} f := \frac{1}{\Gamma(1 - \alpha/2)} \int_{0}^{\infty} t^{-\frac{\alpha}{2}} (-\Delta_{\HH}) e^{t\Delta_\HH} f \, \di t,
\end{equation}
where $\Gamma$ is the Gamma function, $e^{t\Delta_\mathbb{H}}$ is the heat flow, and $f \in L^2(\mathbb{H}^N)$ is any function for which the relevant limit exists in $ L^2$ norm.
For the simplicity of notation, for $\alpha\in(0,2]$, denote
\[
\Lambda_\alpha := -(-\Delta_\mathbb{H})^\frac{\alpha}{2}.
\]
Let $G_\alpha:=G_\alpha(\eta, t)$ be the fundamental solution of 
\begin{equation}
\label{eq:LHE}
\partial_t u- \Lambda_\alpha u = 0 \quad \mbox{in} \quad \mathbb{H}^N\times (0,\infty),
\end{equation}
where $\alpha \in (0,2]$.
When $\alpha=2$, we briefly write $G(\eta,t)$ instead of $G_\alpha(\eta,t)$. For any locally integrable function $f$ in $\mathbb{H}^N$, it is well-known that
\[
[e^{-t(-\Delta_\mathbb{H})^{\alpha/2}}f](\eta) = \int_{\HH^N}G_\alpha( \zeta^{-1}\circ \eta,t) f(\zeta) \, \di \zeta.
\]
See e.g.,  \cite{FS} (for $\alpha=2$) and  \cite{MPS} (for $\alpha \in (0,2)$).

The key to our arguments is the following two sided estimate of the fundamental solution $G_\alpha$ of \eqref{eq:LHE}.
\begin{proposition}
	\label{lem - Gaussian implies A1 A2}
		 $G_\alpha(\eta,t)$ satisfies the following estimate: there exist $C_1, C_2, c_1, c_2>0$ such that \begin{equation}\label{A1}
			\f{C_1 }{t^{Q/\alpha}}g_\alpha\left(\f{|\eta|_{\mathbb H^N}}{c_1t^{1/\alpha}}\right)\le G_\alpha(\eta,t)\le \f{C_2 }{t^{Q/\alpha}}g_\alpha\left(\f{|\eta|_{\mathbb H^N}}{c_2t^{1/\alpha}}\right)
		\end{equation}
		for  $\eta \in \HH$ and $t>0$, where  
		\[
		g_\alpha(s)=
		 \left\{
	\begin{aligned}
			&(1 +s)^{-Q-\alpha} \quad 
			&&\mbox{if} \quad \alpha \in (0,2),\\
			&e^{-s^2} \quad 
			&& \mbox{if} \quad  \alpha=2.
	\end{aligned}
	\right.
		\] 	
\end{proposition}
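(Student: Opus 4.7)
The plan is to treat $\alpha=2$ and $\alpha\in(0,2)$ separately. For $\alpha=2$ the statement \eqref{A1} reduces to the classical two-sided Gaussian heat kernel estimate on $\HH^N$,
\[
G(\eta,t)\asymp t^{-Q/2}\exp\!\bigl(-c|\eta|_{\HH^N}^{2}/t\bigr),
\]
which I would simply invoke from the sources cited just after \eqref{eq:LHE}; nothing new is needed in this regime. The real work is the subelliptic fractional case, where I plan to use Bochner subordination to transfer the Gaussian bound to a polynomial one.

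Specifically, the defining formula \eqref{eq:defofFSL} realizes $-(-\Delta_{\HH})^{\alpha/2}$ as the generator of the semigroup subordinated to $\{e^{t\Delta_{\HH}}\}_{t\ge 0}$ by the $(\alpha/2)$-stable subordinator. At the kernel level this yields
\[
G_\alpha(\eta,t) = \int_0^\infty G(\eta,s)\,\eta_{t}^{(\alpha/2)}(s)\,\di s,
\]
where $\eta_t^{(\alpha/2)}$ is the subordinator density. I would then invoke the classical two-sided bounds of Blumenthal--Getoor type, namely $\eta_t^{(\alpha/2)}(s)\asymp t\,s^{-1-\alpha/2}$ for $s\gtrsim t^{2/\alpha}$ together with a sharp exponential decay in the regime $s\ll t^{2/\alpha}$, and combine them with the $\alpha=2$ Gaussian estimate for $G$.

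The rest is a standard calculation: split the integral at $s\sim t^{2/\alpha}$ and estimate each piece. On $\{s\gtrsim t^{2/\alpha}\}$ the subordinator tail dominates and reduces the contribution to
\[
\int_{t^{2/\alpha}}^\infty s^{-Q/2-1-\alpha/2}\,e^{-c|\eta|_{\HH^N}^2/s}\,\di s \asymp \bigl(t^{2/\alpha}+|\eta|_{\HH^N}^2\bigr)^{-(Q+\alpha)/2}\cdot t,
\]
while on $\{s< t^{2/\alpha}\}$ the exponential decay of $\eta_t^{(\alpha/2)}$ renders that portion comparable or smaller. Pulling everything together yields
\[
G_\alpha(\eta,t)\asymp \frac{t}{\bigl(t^{1/\alpha}+|\eta|_{\HH^N}\bigr)^{Q+\alpha}},
\]
which is exactly \eqref{A1} with $g_\alpha(s)=(1+s)^{-Q-\alpha}$ after factoring out $t^{-Q/\alpha}$. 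The main obstacle I anticipate is the bookkeeping in the lower bound: the lower Gaussian bound for $G$ has a smaller exponential rate than the upper one, so the Gaussian constants and the splitting point must be chosen consistently, which is precisely why the statement allows distinct constants $c_1\ne c_2$ inside $g_\alpha$. No new ingredients beyond doubling of $\HH^N$ with respect to $\dH$ and the Gaussian bound for $G$ enter; the scheme is identical to the one yielding the well-known two-sided bound for the Euclidean fractional heat kernel.
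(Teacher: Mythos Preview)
Your approach is correct and essentially identical to the paper's: both cite the known Gaussian estimate for $\alpha=2$ and, for $\alpha\in(0,2)$, use the subordination formula together with the two-sided Gaussian bound on $G$ and the tail behaviour $\phi_t^\alpha(s)\asymp t\,s^{-1-\alpha/2}$ for $s\gtrsim t^{2/\alpha}$ to produce the polynomial bound. The only cosmetic difference is that, for the upper bound on the region $s\lesssim t^{2/\alpha}$, the paper avoids invoking the exponential decay of the subordinator density and instead uses the simpler moment property $\int_0^\infty s^{-\gamma}\phi_1^\alpha(s)\,\di s<\infty$ combined with the crude bound $G(\eta,s)\le C s^{-Q/2}$.
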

For the proof of Proposition~\ref{lem - Gaussian implies A1 A2}, see Subsection \ref{Some kernel estimates} below.
In the case of $\alpha =2$, \eqref{A1} is well-known (see e.g. \cite{FS}). 
But in the case of $\alpha\in(0,2)$, this is new. To prove the estimates \eqref{A1} we employ a subordination formula in \cite{G}, which allows us to estimate the kernel $G_\alpha(\eta,t)$ via the heat kernel of the sub-Laplacian $-\Delta_{\mathbb H}$.

%

We formulate the definition of solutions.

\begin{definition}
Let $u$ be a nonnegative measurable function in $\HH^N \times (0,T)$, where $T\in (0,\vc]$. 
\begin{itemize}
\item[(i)] We say that $u$ is a solution of \eqref{eq:Fujita} in $\HH^N \times (0,T)$ if $u$ satisfies
\begin{equation}
\label{eq - additional condition solution}
\infty > u(\eta,t) =
[e^{(t-\tau)\Lambda_\alpha}u(\tau)](\eta)  
+\int_{\tau}^{t} [e^{(t-s)\Lambda_\alpha}u(s)^p](\eta) \, \di s
\end{equation}
for almost all (a.a.)  $\eta\in \HH^N$ and $0 < \tau < t < T$.
\item[(ii)] We say that $u$ is a solution of problem \eqref{eq:Fujita} with \eqref{eq:Fujitaini} in $\HH^N \times [0,T)$ if $u$ satisfies
\begin{equation}
\label{eq- defn sols of parabolic eqs}
\infty > u(\eta,t) =[e^{t\Lambda_\alpha}\mu](\eta) + \int_{0}^{t} [e^{(t-s)\Lambda_\alpha}u(s)^p](\eta) \, \di s 
\end{equation}
for a.a.~$\eta\in \HH^N$ and $0 < t < T$. If $u$ satisfies \eqref{eq- defn sols of parabolic eqs}  with $=$ replaced by $\geq$, then $u$ is said to be a supersolution of problem \eqref{eq:Fujita} with \eqref{eq:Fujitaini}  in $\HH^N\times[0,T)$.
\item[(iii)] Let $u$ be a solution of problem \eqref{eq:Fujita} with \eqref{eq:Fujitaini} in $\HH^N\times [0,T)$. We say that $u$ is a minimal solution  if
		\[
		u(\eta,t) \leq v(\eta,t) \quad \text{for a.a.}  \quad \eta\in \HH^N   \,\, \mbox{and} \,\, 0 < t < T
		\]
		for any solution $v$ of problem \eqref{eq:Fujita} with \eqref{eq:Fujitaini} in $\HH^N\times[0,T)$. 
	\end{itemize}
\end{definition}

\subsection{Main results.}

Now we are ready to state the main results of this paper. In the first theorem we show the existence and the uniqueness of the initial trace of  solutions of \eqref{eq:Fujita} and obtain  analogous results to assertions (a) and (b).

\begin{thm}
\label{main thm}
Let  $N\ge1$, $\alpha\in(0,2]$, and $p > 1$. Let $u$ be a solution of \eqref{eq:Fujita} in $\HH^N\times(0,T)$, where $T\in(0, \infty)$. Then there exists a unique Radon measure $\mu$ on $\HH^N$ such that
\begin{equation}
\label{eq- mainthm 1}
\operatorname*{ess\,lim}_{t \to 0^+} \int_{\HH^N} u(\zeta,t) \phi(\zeta) \, \di\zeta  = \int_{\HH^N} \phi(\zeta)   \, \di\mu(\zeta) 
\end{equation}
for all $\phi \in C_c(\HH^N)$.
 Furthermore, there exists $\gamma_A > 0$ depending only on $Q$, $\alpha$, and $p$ such that 
 \begin{itemize}
\item[(i)]
$\displaystyle{\sup_{\eta\in \HH^N} \mu(B(\eta, T^\frac{1}{\alpha})) \le \gamma_A T^{\frac{Q}{\alpha} - \frac{1}{p-1}}}$
\quad if $1<p<p_{\alpha,Q}$;
\item[(ii)]
$\displaystyle{\sup_{\eta\in \HH^N} \mu(B(\eta, \sigma)) \le \gamma_A \left[ \log \left( e + \frac{T^{{1/\alpha}}}{\sigma} \right) \right]^{-\f{Q}{\alpha}}}$
\quad for all $0<\sigma < T^\f{1}{\alpha}$ if $p=p_{\alpha,Q}$;
\item[(iii)]
$\displaystyle{\sup_{\eta\in \HH^N} \mu(B(\eta, \sigma)) \le \gamma_A \sigma^{Q - \frac{\alpha}{p-1}}}$
\quad for all $0<\sigma < T^\f{1}{\alpha}$ if $p>p_{\alpha,Q}$.
\end{itemize}
\end{thm}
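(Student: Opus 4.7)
The plan is to port the scheme of Hisa--Ishige \cite{HI18} to the Heisenberg group, using Proposition~\ref{lem - Gaussian implies A1 A2} as the sole substitute for the explicit Euclidean heat kernel everywhere it appears. The argument splits into two parts: (I) production of the Radon measure $\mu$ satisfying \eqref{eq- mainthm 1}; and (II) the range-dependent growth bounds on $\mu$.

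For part (I), fix $\phi\in C_c(\mathbb{H}^N)$ with $\phi\ge 0$. Using \eqref{eq - additional condition solution} together with the self-adjointness of $e^{t\Lambda_\alpha}$ on $L^2$ (a consequence of the symmetry $G_\alpha(\eta^{-1},t)=G_\alpha(\eta,t)$), for $0<\tau<t<T$ one has
\[
\int_{\mathbb{H}^N}\!\! u(\zeta,t)\phi(\zeta)\,\di\zeta = \int_{\mathbb{H}^N}\!\! u(\zeta,\tau)\,[e^{(t-\tau)\Lambda_\alpha}\phi](\zeta)\,\di\zeta + \int_\tau^t\!\!\int_{\mathbb{H}^N} u(\zeta,s)^p\,[e^{(t-s)\Lambda_\alpha}\phi](\zeta)\,\di\zeta\,\di s.
\]
A standard semigroup subtraction (comparing \eqref{eq - additional condition solution} at two values of $\tau$) shows that $\tau\mapsto[e^{(t-\tau)\Lambda_\alpha}u(\tau)](\eta)$ is monotone non-increasing on $(0,t)$ and nonnegative, so it admits a finite pointwise limit as $\tau\downarrow 0$; combined with monotone convergence on the nonlinear term, this shows $\lim_{t\to 0^+}\int u(\zeta,t)\phi(\zeta)\,\di\zeta$ exists and defines a nonnegative linear functional in $\phi$, which by the Riesz--Markov theorem yields the unique Radon measure $\mu$. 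The $L^1_{\rm loc}$ bound on $u(\cdot,t)$ required to justify the Fubini exchanges follows from applying the upper part of Proposition~\ref{lem - Gaussian implies A1 A2} to \eqref{eq - additional condition solution}.

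For part (II), the pointwise bound $u(\zeta,s)\ge [e^{s\Lambda_\alpha}\mu](\zeta)$ (obtained by letting $\tau\to 0^+$ in \eqref{eq - additional condition solution} and using Fatou together with (I)) inserted into \eqref{eq- defn sols of parabolic eqs} yields
\[
\infty > u(\eta,T) \ge \int_0^T [e^{(T-s)\Lambda_\alpha}([e^{s\Lambda_\alpha}\mu])^p](\eta)\,\di s.
\]
By the lower kernel bound of Proposition~\ref{lem - Gaussian implies A1 A2} and monotonicity of $g_\alpha$, $[e^{s\Lambda_\alpha}\mu](\eta)\ge C_1 g_\alpha(1)\,s^{-Q/\alpha}\,\mu(B(\eta,c_1 s^{1/\alpha}))$. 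Writing $M(\sigma):=\sup_{\eta\in\mathbb{H}^N}\mu(B(\eta,\sigma))$ and testing the inequality against $\chi_{B(\eta_0,\sigma)}$ (or a smooth averaged version) while invoking the upper kernel bound produces a self-improvement inequality for $M$. When $p<p_{\alpha,Q}$ the resulting time integral at $\sigma=T^{1/\alpha}$ is finite without further conditions and gives (i); when $p>p_{\alpha,Q}$ the inequality is dilation invariant in $\sigma$, forcing the scale-invariant power law (iii); at criticality $p=p_{\alpha,Q}$ one bootstraps on a dyadically decreasing sequence of scales, each iteration gaining one unit of $\log(T^{1/\alpha}/\sigma)$, until the bound (ii) is reached.

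The main obstacle is the critical case, where the scale-by-scale bootstrap must be done with care to track all constants and ensure the logarithm accumulates linearly. Non-commutativity of $\mathbb{H}^N$ plays no essential role: only left-invariance of $\dist_\mathbb{H}$ is used (to reduce each estimate to $\eta_0=0$ by translating $\mu$), and only the symmetry $G_\alpha(\eta^{-1},t)=G_\alpha(\eta,t)$ is used (to supply self-adjointness for all Fubini-type exchanges). With these two facts in hand and Proposition~\ref{lem - Gaussian implies A1 A2} available, the subcritical, critical, and supercritical time-integral computations follow the same pattern as in \cite{HI18}, with $Q$ replacing $N$ throughout.
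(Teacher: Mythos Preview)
Your Part~(I) is reasonable and close in spirit to the paper's approach (Lemma~\ref{lem 2}), though the paper argues via weak compactness of Radon measures and then proves uniqueness of the limit, rather than via monotonicity of $\tau\mapsto e^{(t-\tau)\Lambda_\alpha}u(\tau)$.

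The serious gap is in Part~(II) for $\alpha=2$. You assert that with Proposition~\ref{lem - Gaussian implies A1 A2} in hand, ``the subcritical, critical, and supercritical time-integral computations follow the same pattern as in \cite{HI18}''. The paper explicitly identifies this as false. The obstruction is that \eqref{A1} for $\alpha=2$ has $c_1\neq c_2$ in general: because the profile $g_2(s)=e^{-s^2}$ is exponential, the ratio $g_2(r/c_1)/g_2(r/c_2)$ is unbounded in $r$, and so the two-sided bound is \emph{not} an adequate substitute for the explicit Euclidean Gaussian (for which $c_1=c_2$). Any argument that needs to compare $G_2(\,\cdot\,,t_1)$ and $G_2(\,\cdot\,,t_2)$ pointwise at the same spatial argument---as the \cite{HI18}-style iteration does, and as is implicit in your unspecified ``self-improvement inequality'' and ``dyadic bootstrap''---breaks down here. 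For $\alpha\in(0,2)$ the polynomial profile makes $c_1,c_2$ harmless, which is why that case is tractable by kernel bounds alone.

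The paper therefore runs two genuinely different proofs. For $\alpha\in(0,2)$ it applies an ODE comparison (Lemma~\ref{Lemma:XODE}) to the quantity $m(t)=t^{Q/\alpha}\int G_\alpha(\eta^{-1}\circ\xi,t)\,v(\eta,t)\,\di\eta$, where $v$ is a time-shift of $u$; the key kernel inequality $G_\alpha(\,\cdot\,,2t-s)\gtrsim (s/2t)^{Q/\alpha}G_\alpha(\,\cdot\,,s)$ holds precisely because $g_\alpha$ is polynomial. For $\alpha=2$ the paper abandons kernel comparison entirely: it first shows (Lemma~\ref{Lemma:3.1}) that $u$ is a weak solution in the sense of Definition~\ref{Definition:weak}, and then runs the Mitidieri--Pohozaev test-function method with a cutoff $\psi_R$ engineered so that $|(-\partial_t-\Delta_{\mathbb H})\psi_R|\lesssim R^{-1}(\psi_R^*)^{1/p}$, extracting (i)--(iii) via H\"older and an integration in $R$.

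A secondary logical issue: in Part~(II) you invoke \eqref{eq- defn sols of parabolic eqs} and the bound $u\ge e^{s\Lambda_\alpha}\mu$ before these are established. In the paper, the fact that $u$ solves the Cauchy problem with data $\mu$ is Theorem~\ref{mainthm 2}, proved \emph{after} and \emph{using} the conclusions of Theorem~\ref{main thm}. The paper avoids circularity by working with the shift $v(\eta,t)=u(\eta,t+(2\rho)^\alpha)$ and using only \eqref{eq - additional condition solution}.
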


Since $\mu$ in \eqref{eq- mainthm 1} is unique, the following holds:
\begin{remark}
If problem \eqref{eq:Fujita} with \eqref{eq:Fujitaini} possesses a  solution  in $\mathbb{H}^N\times[0,T)$, where $T\in(0,\infty)$, then $\mu$ satisfies assertions {\rm (i)--(iii)} in Theorem {\rm \ref{main thm}}.
\end{remark}
\begin{remark}
Since $Q/\alpha - 1/(p-1)<0$ when $1<p<p_{\alpha,Q}$, assertion (i) in Theorem \ref{main thm} is equivalent to
\[
\sup_{\eta\in \HH^N} \mu(B(\eta, \sigma)) \le \gamma_A \sigma^{Q - \frac{\alpha}{p-1}}
\]
for all $0<\sigma < T^{1/\alpha}$.
\end{remark}

\begin{remark}
\label{Remark:1.2}
 Let $1<p\le p_{\alpha,Q}$ and $u$ be a solution of problem \eqref{eq:Fujita} with \eqref{eq:Fujitaini} in $\mathbb{H}^N\times[0,\infty)$.
It follows from the assertions {\rm (i)} and {\rm (ii)} in Theorem {\rm \ref{main thm}} that $\mu$ must be zero in $\mathbb{H}^N$.
Then, in the case of $\alpha=2$,  Theorem {\rm \ref{main thm}} leads the same conclusion as \cites{A01, AJS15, FRT24, JKS16,P98, P99,PV00, RY22,Z98}.
\end{remark}

As a corollary of Theorem \ref{main thm}, we have:
\begin{corollary}\label{cor 1}
	Let  $N\ge1$, $\alpha\in (0,2]$, and $p > 1$. Let $u$ be a solution of problem \eqref{eq:Fujita} with \eqref{eq:Fujitaini} in $\HH^N\times[0,T)$, where $T\in(0, \infty)$. Then there exists $\gamma'_A> 0$ depending only on $N$, $\alpha$, and $p$ such that
	\begin{equation}\label{eq1-cor1}
	\sup_{\eta\in \HH^N} \fint_{B(\eta,(T-t)^{\frac{1}{\alpha}})} u(\zeta,t) \, \di\zeta  \leq \gamma'_A (T - t)^{-\frac{1}{p-1}}
	\end{equation}
	for a.a.~$0 < t < T$.
	
\end{corollary}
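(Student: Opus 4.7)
The plan is to apply Theorem \ref{main thm} to a time-shifted version of $u$. For fixed $t \in (0,T)$, set $\tilde T := T - t$ and $v(\zeta, s) := u(\zeta, t + s)$ for $s \in (0, \tilde T)$. Translating both $\tau$ and the time variable in \eqref{eq - additional condition solution} by $t$ shows that $v$ itself satisfies \eqref{eq - additional condition solution} on $\HH^N \times (0, \tilde T)$, so $v$ is a solution of \eqref{eq:Fujita} there. Theorem \ref{main thm} applied to $v$ then produces a unique Radon measure $\mu_t$ on $\HH^N$ that is the initial trace of $v$ and that obeys the bounds (i)--(iii) of that theorem with $T$ replaced by $\tilde T$.

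The next step is to identify $\mu_t$ with $u(\cdot, t)\,\di\zeta$ for a.a. $t \in (0,T)$. Indeed, taking $\tau = t$ in \eqref{eq - additional condition solution} gives
\[
v(\eta, s) = [e^{s\Lambda_\alpha} u(\cdot, t)](\eta) + \int_0^s [e^{(s-r)\Lambda_\alpha} v(r)^p](\eta) \, \di r,
\]
which is precisely \eqref{eq- defn sols of parabolic eqs} with initial datum $u(\cdot, t)\,\di\zeta$, regarded as a Radon measure on $\HH^N$. The uniqueness of the initial trace in Theorem \ref{main thm} therefore forces $\mu_t = u(\cdot, t)\,\di\zeta$.

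The last step is to convert the three cases of the measure bound into the single averaged estimate. In case (i) of Theorem \ref{main thm}, the bound applied to $v$ yields $\mu_t(B(\eta, \tilde T^{1/\alpha})) \le \gamma_A \tilde T^{Q/\alpha - 1/(p-1)}$ directly. In case (iii), letting $\sigma \uparrow \tilde T^{1/\alpha}$ in $\mu_t(B(\eta, \sigma)) \le \gamma_A \sigma^{Q - \alpha/(p-1)}$ gives the same estimate by monotone continuity of the measure on nested open balls. In both cases, dividing by $|B(\eta, \tilde T^{1/\alpha})| \asymp \tilde T^{Q/\alpha}$ produces the claimed $\tilde T^{-1/(p-1)}$ bound. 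In the critical case (ii), one has $1/(p-1) = Q/\alpha$, and the same $\sigma \uparrow \tilde T^{1/\alpha}$ limit reduces the logarithmic factor to the constant $[\log(e+1)]^{-Q/\alpha}$, again yielding the desired $\gamma'_A \tilde T^{-1/(p-1)}$ bound.

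The main obstacle is the clean identification $\mu_t = u(\cdot, t)\,\di\zeta$: it rests on the uniqueness of the initial trace provided by Theorem \ref{main thm}, together with the verification that $u(\cdot, t) \in L^1_{\mathrm{loc}}(\HH^N)$ for a.a. $t$, so that the right-hand side of \eqref{eq- defn sols of parabolic eqs} is well-defined and $u(\cdot, t)\,\di\zeta$ is a bona fide Radon measure on $\HH^N$.
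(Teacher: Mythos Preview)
Your proof is correct and follows essentially the same approach as the paper: time-shift $u$ by $t$ to obtain a solution on $(0,T-t)$, identify the initial trace with $u(\cdot,t)$, and read off the bound from Theorem~\ref{main thm}. Your treatment is in fact slightly more careful than the paper's one-line version, since you pass to the limit $\sigma \uparrow (T-t)^{1/\alpha}$ rather than formally setting $\sigma = (T-t)^{1/\alpha}$ (which lies at the boundary of the range in assertions (ii)--(iii)), and you make the identification $\mu_t = u(\cdot,t)\,\di\zeta$ explicit via the uniqueness of the initial trace.
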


Theorem \ref{main thm} can be regarded as a generalization of the result in \cite[Theorem 1.1]{HI18}.
Let $u$ be a solution of problem \eqref{eq:Fujita} with \eqref{eq:Fujitaini} in $\mathbb{H}^N\times[0,T)$, where $T\in (0,\infty)$. 
We first prove the existence and the uniqueness of the initial trace of the solution $u$
and then obtain necessary conditions for the solvability (i.e. assertions (i)--(iii) in Theorem \ref{main thm}).
The proof of our necessary conditions in the case of $\alpha\in (0,2)$ follows the arguments in the proofs of \cites{FHIL23, H24, HIT23, HS24, LS21}, which enable us to more easily obtain the necessary conditions in \cite[Theorem 1.1]{HI18}.
Let $\zeta\in\mathbb{H}^N$.
Following these results, we get an integral inequality related to 
\begin{equation}
\label{eq:FHIL}
t^\frac{Q}{\alpha} \int_{\mathbb{H}^N} G_\alpha(\zeta^{-1}\circ\eta,t) u(\eta,t) \, \di \eta,
\end{equation}
and then obtain the desired estimates by applying the existence theorem for ordinary differential equations to this estimate.
See Lemma~\ref{Lemma:XODE} below.

However, the proof in the case of $\alpha=2$ is completely different from those in \cites{FHIL23, H24, HIT23, HS24, LS21} and \cite{HI18}.
First, we extend solutions to the framework of weak ones
and then employ a suitable cut-off function as a test function.
This method was developed by Mitidieri--Pohozaev \cite{MP01} and can be applied not only to semilinear heat equations but also to a wider class of equations.
For the case of semilinear heat equations, see e.g. \cites{GP21, IKO20, IS19}.
In this paper we follow the arguments in \cite{IKO20}.

The reason why we adopt  two methods  comes from the fact that in Proposition~\ref{lem - Gaussian implies A1 A2}, $c_1=c_2$ does generally not hold in the case of $\alpha=2$.
In order to apply the arguments in \cites{FHIL23, H24, HIT23, HS24, LS21}, we have to get 
\[
G_\alpha(\zeta^{-1}\circ\eta, 2t-s) \ge \left(\frac{s}{2t}\right)^\frac{Q}{\alpha} G_\alpha(\zeta^{-1}\circ\eta, s)
\]
for $\eta,\zeta\in\mathbb{H}^N$ and $0<s<t$.
However, in the case of $\alpha =2$,  we can not get such an estimate from Proposition~\ref{lem - Gaussian implies A1 A2}, since $g_\alpha$ is an exponential function.
On the other hand, in the case of $\alpha \in (0,2)$,  it can be regarded as $c_1=c_2=1$ and we can avoid this problem, since $g_\alpha$ is a polynomial function.
In the previous studies dealing with $\mathbb{R}^N$ \cites{H24, HI18, HIT23, HS24,LS21,FHIL23}, this problem did not arise because they were either dealing with the fractional case  or had an explicit formula of $G_\alpha$.

By Theorem \ref{main thm} we have
\begin{thm}
	\label{mainthm 2}
Let $u$ be a solution of \eqref{eq:Fujita} in $\HH^N\times(0,T)$, where $T\in (0,\infty)$. If $\mu$ is a nonnegative Radon measure  satisfying \eqref{eq- mainthm 1}, then  u is a solution of problem \eqref{eq:Fujita} with \eqref{eq:Fujitaini} in $\HH^N\times[0,T)$.
\end{thm}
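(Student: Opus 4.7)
The plan is to fix $t\in(0,T)$ and pass to the limit $\tau\to 0^+$ in the identity
$$u(\eta,t) = [e^{(t-\tau)\Lambda_\alpha}u(\tau)](\eta) + \int_{\tau}^{t} [e^{(t-s)\Lambda_\alpha}u(s)^p](\eta) \, \di s$$
supplied by \eqref{eq - additional condition solution}, and to identify the two limits with the corresponding terms on the right-hand side of \eqref{eq- defn sols of parabolic eqs}. The nonlinear integral converges monotonically to $\int_0^t [e^{(t-s)\Lambda_\alpha}u(s)^p](\eta)\,\di s$ by the monotone convergence theorem, and this limit is finite for a.a.~$\eta$ because $u(\eta,t)<\vc$.

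For the linear term, write
$$[e^{(t-\tau)\Lambda_\alpha}u(\tau)](\eta) = \int_{\HH^N} G_\alpha(\zeta^{-1}\circ\eta,t-\tau)u(\zeta,\tau)\,\di\zeta$$
and split the integration with a cutoff $\phi_R\in C_c(\HH^N)$ satisfying $\phi_R\equiv 1$ on $B(\eta,R)$ and $\phi_R\equiv 0$ outside $B(\eta,2R)$. On the piece carrying $\phi_R$, the continuity of $s\mapsto G_\alpha(\zeta^{-1}\circ\eta,s)$ away from $s=0$, uniform on the compact support of $\phi_R$, lets one replace $G_\alpha(\zeta^{-1}\circ\eta,t-\tau)$ by $G_\alpha(\zeta^{-1}\circ\eta,t)$ up to an error that vanishes as $\tau\to 0^+$. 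Applying the trace property \eqref{eq- mainthm 1} to the test function $\zeta\mapsto\phi_R(\zeta)G_\alpha(\zeta^{-1}\circ\eta,t)\in C_c(\HH^N)$ then gives convergence to $\int_{\HH^N}\phi_R(\zeta) G_\alpha(\zeta^{-1}\circ\eta,t)\,\di\mu(\zeta)$, after which letting $R\to\infty$ yields $[e^{t\Lambda_\alpha}\mu](\eta)$ by monotone convergence.

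The main obstacle is controlling the tail
$$\int_{\HH^N\setminus B(\eta,R)}G_\alpha(\zeta^{-1}\circ\eta,t-\tau)u(\zeta,\tau)\,\di\zeta$$
uniformly for small $\tau$ and making it vanish as $R\to\infty$. A dyadic annular decomposition of $\HH^N\setminus B(\eta,R)$ combined with the upper bound of Proposition~\ref{lem - Gaussian implies A1 A2} reduces the task to estimating $\|u(\cdot,\tau)\|_{L^1(B(\eta,2^{k+1}R))}$; Corollary~\ref{cor 1} controls the average of $u(\cdot,\tau)$ on balls of radius $(T-\tau)^{1/\alpha}$ uniformly for $\tau\in(0,T/2)$, and a covering by such balls then supplies $\|u(\cdot,\tau)\|_{L^1(B(\eta,2^{k+1}R))}\lesi (2^kR)^Q$. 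In the fractional case $\alpha\in(0,2)$, the polynomial decay $\sim|\zeta|^{-Q-\alpha}$ of $G_\alpha$ produces a tail of order $R^{-\alpha}\to 0$; in the Gaussian case $\alpha=2$, the exponential decay absorbs any polynomial growth, so the tail still vanishes and the argument goes through uniformly in $\alpha\in(0,2]$.
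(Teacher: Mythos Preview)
Your outline is essentially the same as the paper's proof: cut off at radius $R$ (the paper uses $2^n t^{1/\alpha}$), handle the compact piece by uniform continuity of $G_\alpha$ in time combined with the uniform local $L^1$ bound from Lemma~\ref{lem 2} and the trace property~\eqref{eq- mainthm 1}, and kill the tail via a dyadic decomposition, the kernel bound~\eqref{A1}, and a uniform-in-$\tau$ mass estimate on balls.

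One citation issue: invoking Corollary~\ref{cor 1} as stated is circular, since that corollary is formulated for solutions of the initial value problem~\eqref{eq:Fujita}--\eqref{eq:Fujitaini}, which is precisely what you are proving. The estimate you need, namely $\sup_{\xi}\int_{B(\xi,\rho)}u(\zeta,\tau)\,\di\zeta\lesi\rho^{Q-\alpha/(p-1)}$ uniformly for small $\tau$, does hold for solutions of the equation alone---it is the content of Lemma~\ref{lem 5} (and its $\alpha=2$ analogue), and is exactly what the paper invokes under the label ``Theorem~\ref{main thm}'' in \eqref{eq-1st limit proof main thm 2}. Replace the reference to Corollary~\ref{cor 1} by a direct appeal to Theorem~\ref{main thm} applied to the time-shifted solution $u_\tau(\cdot,\cdot)=u(\cdot,\cdot+\tau)$, and the argument goes through.
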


We give sufficient conditions for the solvability of problem \eqref{eq:Fujita} with \eqref{eq:Fujitaini}.
The proofs follow the arguments in \cite{HI18}.
\begin{thm}
	\label{mainthm 3}
 Let  $N\ge1$, $\alpha\in(0,2]$, and $1 < p < p_{\alpha,Q}$. Then there exists $\gamma_C > 0$ such that, if $\mu$ is a nonnegative Radon mesure on $\HH^N$ satisfying
\[
\sup_{\eta\in \HH^N} \mu(B(\eta, T^{\frac{1}{\alpha}})) \le \gamma_C T^{\frac{Q}{\alpha} - \frac{1}{p-1}} \quad \text{for some } T > 0, 
\]
then problem \eqref{eq:Fujita} with \eqref{eq:Fujitaini} possesses a solution in $\HH^N\times[0,T)$.
\end{thm}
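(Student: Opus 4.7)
I follow the iterative scheme of Hisa–Ishige~\cite{HI18}. The strategy splits into three layers: (a) derive a pointwise $L^\infty$-type bound for the linear heat flow $V(\eta,t):=[e^{t\Lambda_\alpha}\mu](\eta)$; (b) use it to build an explicit supersolution $\overline u$ of the integral equation \eqref{eq- defn sols of parabolic eqs}; and (c) obtain the solution as the monotone limit of Picard iterates dominated by $\overline u$.

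For step~(a), observe that $1<p<p_{\alpha,Q}$ forces $Q-\alpha/(p-1)<0$, so monotonicity of $\mu$ in the radius upgrades the hypothesis to the scaled form
\[
\sup_{\eta\in\HH^N}\mu(B(\eta,\sigma))\le \gamma_C\,\sigma^{Q-\alpha/(p-1)},\qquad 0<\sigma\le T^{1/\alpha},
\]
while a standard covering argument gives $\mu(B(\eta,\sigma))\le C\gamma_C \sigma^Q T^{-1/(p-1)}$ for $\sigma>T^{1/\alpha}$. Combined with the upper heat-kernel bound of Proposition~\ref{lem - Gaussian implies A1 A2} and a dyadic decomposition of $\HH^N$ into annuli centered at $\eta$, the resulting geometric series converges (common ratio $2^{-\alpha p/(p-1)}<1$) and produces
\[
V(\eta,t)\le C_\sharp\,\gamma_C\, t^{-1/(p-1)}\qquad\text{for all }(\eta,t)\in\HH^N\times(0,T).
\]

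For steps~(b)--(c), set $u_0\equiv 0$ and define Picard iterates
\[
u_{n+1}(\eta,t):=V(\eta,t)+\int_0^t[e^{(t-s)\Lambda_\alpha}u_n(s)^p](\eta)\,\di s,
\]
which are nondecreasing in $n$ by monotonicity of $z\mapsto z^p$. If we can inductively close a pointwise bound $u_n\le \overline u$ with $\overline u:=KV$ for a suitable numerical constant $K>1$, monotone convergence then furnishes a finite limit $u=\lim_n u_n$ that solves \eqref{eq- defn sols of parabolic eqs} in $\HH^N\times[0,T)$. The induction reduces to the Duhamel estimate
\[
\int_0^t[e^{(t-s)\Lambda_\alpha}V(s)^p](\eta)\,\di s \le \kappa(\gamma_C)\,V(\eta,t),\qquad(\eta,t)\in\HH^N\times(0,T),
\]
with $\kappa(\gamma_C)\to 0$ as $\gamma_C\to 0$; choosing $\gamma_C$ small enough that $K^p\kappa(\gamma_C)\le K-1$ then closes the induction.

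The main obstacle is precisely this Duhamel estimate. A naive use of the $L^\infty$ bound from~(a) gives an integrand of size $\gamma_C^p s^{-p/(p-1)}$, whose time integral diverges at $s=0$ since $p/(p-1)>1$, so one must exploit the full spatial profile of $V(s)$ rather than just its sup-norm. Following~\cite{HI18}, the plan is to factor $V(s)^p=V(s)^{p-1}\cdot V(s)$, bound $V(s)^{p-1}$ pointwise via the measure condition localized at the scale $s^{1/\alpha}$, and exchange the order of integration with the outer kernel $G_\alpha(\cdot,t-s)$, reducing the remaining spatial convolution to explicit heat-kernel manipulations controlled by Proposition~\ref{lem - Gaussian implies A1 A2}. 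The output is the required estimate with $\kappa(\gamma_C)=C\gamma_C^{p-1}$, which completes the proof.
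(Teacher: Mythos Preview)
Your overall architecture---Picard iterates dominated by a multiple of the linear flow $V=e^{t\Lambda_\alpha}\mu$, closed by a Duhamel estimate with small $\gamma_C$---is exactly the paper's approach (see the proof of Theorem~\ref{mainthm 3} and Lemma~\ref{lem - supersolutions imply solution}). However, you have misdiagnosed the obstacle in the Duhamel step, and the ``scaled form'' you extract in step~(a) is a red herring that leads you to the wrong $L^\infty$ bound.

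Concretely: your step~(a) yields $\|V(s)\|_{L^\infty}\le C_\sharp\gamma_C\,s^{-1/(p-1)}$, obtained by plugging the scaled estimate $\mu(B(\eta,\sigma))\le \gamma_C\,\sigma^{Q-\alpha/(p-1)}$ into a dyadic decomposition. But since $Q-\alpha/(p-1)<0$, this scaled bound is \emph{weaker} than the trivial monotonicity bound $\mu(B(\eta,s^{1/\alpha}))\le \mu(B(\eta,T^{1/\alpha}))\le \gamma_C T^{Q/\alpha-1/(p-1)}$ for $s<T$. Feeding the latter into Lemma~\ref{lem 1} gives the sharper estimate (normalize $T=1$)
\[
\|V(s)\|_{L^\infty}\le C\gamma_C\,s^{-Q/\alpha},
\]
and now $\|V(s)\|_{L^\infty}^{p-1}\le C\gamma_C^{p-1}s^{-Q(p-1)/\alpha}$ \emph{is} integrable on $(0,T)$ precisely because $p<p_{\alpha,Q}$ means $Q(p-1)/\alpha<1$. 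So no ``full spatial profile'' of $V(s)$ is needed: one factors $V(s)^p\le \|V(s)\|_{L^\infty}^{p-1}V(s)$, applies the semigroup identity $e^{(t-s)\Lambda_\alpha}V(s)=V(t)$ (this is \eqref{eq:G4}), and pulls $V(t)$ out of the time integral. The paper's proof is literally these three lines; your proposal to ``exchange the order of integration'' and invoke ``explicit heat-kernel manipulations controlled by Proposition~\ref{lem - Gaussian implies A1 A2}'' is unnecessary machinery. The covering argument for $\sigma>T^{1/\alpha}$ in your step~(a) is likewise not needed, since one only works on $(0,T)$.
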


\begin{thm}
	\label{mainthm 4}
 Let $N\ge1$, $\alpha\in(0,2]$, and $\theta>1$. Then there exists $\gamma_D > 0$ such that, if $\mu$ is a nonnegative measurable function in $\HH^N$ satisfying
\begin{equation}
\label{eq:SC2}
\sup_{\zeta\in \HH^N} \left[ \fint_{B(\zeta,\sigma)} \mu(\eta)^\theta \, \di\eta  \right]^{\frac{1}{\theta}} \leq \gamma_D \sigma^{-\frac{\alpha}{p-1}}, \quad 0 < \sigma < T^{\frac{1}{\alpha}} 
\end{equation}
for some $T > 0$, then problem \eqref{eq:Fujita} with \eqref{eq:Fujitaini} possesses a solution in $\HH^N\times[0,T)$.
\end{thm}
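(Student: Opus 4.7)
The plan is to construct the solution of \eqref{eq- defn sols of parabolic eqs} by monotone iteration, adapting the sufficient-condition argument of \cite{HI18} to the Heisenberg setting. Set $u_0\equiv 0$ and, recursively,
\[
u_{n+1}(\eta,t) := [e^{t\Lambda_\alpha}\mu](\eta) + \int_0^t [e^{(t-s)\Lambda_\alpha} u_n(\cdot,s)^p](\eta) \, \di s.
\]
Positivity of the semigroup $e^{t\Lambda_\alpha}$ and of the map $u\mapsto u^p$ makes $\{u_n\}$ nonnegative and monotone increasing, so the task reduces to bounding it uniformly in an appropriate function space; the monotone limit $u=\lim_n u_n$ will then be a (minimal) solution in the sense of \eqref{eq- defn sols of parabolic eqs}.

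The first key step is the pointwise estimate
\[
[e^{t\Lambda_\alpha}\mu](\eta) \leq C\,\gamma_D\, t^{-1/(p-1)}, \qquad \eta\in\HH^N,\ 0<t<T,
\]
obtained from \eqref{eq:SC2} via Proposition~\ref{lem - Gaussian implies A1 A2}. I would decompose $\HH^N$ into $B(\eta,r)$ and the dyadic annuli $A_k:=B(\eta,2^{k+1}r)\setminus B(\eta,2^k r)$, with $r:=c_2 t^{1/\alpha}$, apply H\"older's inequality on each region with exponents $\theta,\theta'$, and use the homogeneity $|B(\cdot,R)|\sim R^Q$ together with \eqref{eq:SC2} to bound $\int_{A_k}\mu \leq C\gamma_D (2^k r)^{Q-\alpha/(p-1)}$. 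Multiplying by the annular upper bound $t^{-Q/\alpha}g_\alpha(2^k)$ from Proposition~\ref{lem - Gaussian implies A1 A2} and summing yields the geometric series $\sum_k g_\alpha(2^k)\,2^{k(Q-\alpha/(p-1))}$, whose terms decay like $2^{-k\alpha p/(p-1)}$ when $\alpha\in(0,2)$ and super-exponentially when $\alpha=2$, so the sum is uniformly finite. The annuli with $2^{k+1}r\geq T^{1/\alpha}$, outside the range where \eqref{eq:SC2} is directly assumed, are handled by covering $B(\eta,2^{k+1}r)$ by balls of radius $T^{1/\alpha}$ on which \eqref{eq:SC2} does hold.

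Because a naive pointwise bound $u\leq A t^{-1/(p-1)}$ is not preserved by Duhamel (the integral $\int_0^t s^{-p/(p-1)}\,\di s$ diverges at $s=0$), the iteration has to be run in the scale-invariant Morrey-type norm
\[
\|u\|_Z := \sup_{0<t<T,\,\zeta\in\HH^N} t^{1/(p-1)}\left[\fint_{B(\zeta,t^{1/\alpha})} u(\eta,t)^\theta \, \di\eta\right]^{1/\theta}.
\]
The argument of Step 1, applied to averages over $B(\zeta,t^{1/\alpha})$ rather than to a point, gives $\|e^{t\Lambda_\alpha}\mu\|_Z\leq C\gamma_D$. The required closure inequality
\[
\Big\|\int_0^t e^{(t-s)\Lambda_\alpha} u(s)^p \, \di s\Big\|_Z \leq C\|u\|_Z^p
\]
should then follow from the same dyadic kernel decomposition applied to the Duhamel integrand, combined with a beta-integral identity $\int_0^t (t-s)^{-a}s^{-b}\,\di s \sim t^{1-a-b}$ with $a+b<1$ chosen to match the $Z$-scaling. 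Together with smallness of $\gamma_D$, a standard contraction-type argument yields $\|u_n\|_Z \leq K\gamma_D$ uniformly in $n$, and the monotone limit solves \eqref{eq- defn sols of parabolic eqs}.

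The main obstacle I anticipate is the closure estimate for the Duhamel term in the $Z$-norm: one must simultaneously control the time-integration singularity at $s=0$ and the spatial Morrey scaling at radius $t^{1/\alpha}$, which requires careful interplay between $L^\theta$-averaging at scale $t^{1/\alpha}$ on the outside and at scale $s^{1/\alpha}$ inside the integral. A secondary complication, already noted after Theorem~\ref{main thm}, is that in the case $\alpha=2$ the constants $c_1,c_2$ of Proposition~\ref{lem - Gaussian implies A1 A2} cannot be taken equal, so certain self-similar manipulations that are transparent for $\alpha\in(0,2)$ (polynomial kernel) have to be replaced by direct Gaussian estimates relying only on the upper bound; this is however a minor bookkeeping issue compared with the Duhamel closure.
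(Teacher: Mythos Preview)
Your plan differs substantially from the paper's proof and leaves the hardest step unresolved. The paper avoids any Morrey-space closure entirely by a choice of supersolution that exploits the semigroup property exactly: after reducing to $T=1$ and (via Jensen) to $1<\theta<p$, it sets
\[
w(\eta,t) := \bigl[e^{t\Lambda_\alpha}\mu^\theta\bigr](\eta)^{1/\theta},
\]
not $e^{t\Lambda_\alpha}\mu$. The point is that $w(s)^p = [e^{s\Lambda_\alpha}\mu^\theta]\cdot[e^{s\Lambda_\alpha}\mu^\theta]^{p/\theta-1}$; pulling out the second factor in $L^\infty$ and using $e^{(t-s)\Lambda_\alpha}e^{s\Lambda_\alpha}\mu^\theta = e^{t\Lambda_\alpha}\mu^\theta$ collapses the Duhamel term to $e^{t\Lambda_\alpha}\mu^\theta$ times a scalar time-integral. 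Lemma~\ref{lem 1} together with \eqref{eq:SC2} gives $\|e^{s\Lambda_\alpha}\mu^\theta\|_{L^\infty(\HH^N)}\le C\gamma_D^\theta s^{-\theta/(p-1)}$, and since $1<\theta<p$ the resulting integral $\int_0^t s^{-(p-\theta)/(p-1)}\,\di s$ converges. One obtains $\mathcal F[w]\le (1+C\gamma_D^{p-1})w$, so for $\gamma_D$ small $2w$ is a supersolution, and Lemma~\ref{lem - supersolutions imply solution} finishes.

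Your route via the norm $\|\cdot\|_Z$ is the Kozono--Yamazaki style alternative, but the closure inequality you need,
\[
\Bigl\|\int_0^t e^{(t-s)\Lambda_\alpha}u(s)^p\,\di s\Bigr\|_Z \le C\|u\|_Z^p,
\]
is not bookkeeping: it requires $L^{\theta/p}\to L^\theta$ smoothing of $e^{t\Lambda_\alpha}$ at the Morrey scale with the correct time decay, followed by a beta-integral argument. You have not established this (you say it ``should follow''), and nothing in the paper's toolkit (Proposition~\ref{lem - Gaussian implies A1 A2}, Lemma~\ref{lem 1}) gives it directly. So as written the proposal has a genuine gap precisely where you anticipated. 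The paper's supersolution trick is both shorter and sidesteps the issue entirely; the $c_1\neq c_2$ concern you raise for $\alpha=2$ is also irrelevant there, since only the upper kernel bound (through Lemma~\ref{lem 1}) is ever used.
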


\begin{thm}
	\label{Theorem:SC3}
	Let $N\ge1$, $\alpha\in(0,2]$, $p=p_{\alpha,Q}$, and $\beta>0$. 
	For $s>0$, set
	\begin{equation}
		\label{eq:SC31}
		\Psi_\beta(s) := s[\log(e+s)]^\beta, \qquad
		\rho(s) := s^{-Q} \left[\log\left(e+\frac{1}{s}\right)\right]^{-\frac{Q}{\alpha}}.
	\end{equation}
	Then there exists $\gamma_E > 0$ such that, if $\mu$ is a nonnegative measurable function in $\HH^N$ satisfying
	\begin{equation}
		\label{eq:SC32}
		\sup_{\zeta \in \mathbb{H}^N}  \Psi_\beta^{-1}\left[ \fint_{B(\zeta,\sigma)} \Psi_\beta(T^\frac{1}{p-1} \mu(\eta)) \, \di\eta \right] \le \gamma_E \rho(\sigma T^{-\frac{1}{\alpha}}), \quad 0 < \sigma < T^\frac{1}{\alpha} 
	\end{equation}
	for some $T > 0$, then problem \eqref{eq:Fujita} with \eqref{eq:Fujitaini} possesses a solution in $\mathbb{H}^N\times[0,T)$.
\end{thm}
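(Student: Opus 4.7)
The plan is to construct a minimal solution by monotone Picard iteration dominated by a spatially constant supersolution of the correct borderline logarithmic shape. The key input is a pointwise bound on the linear evolution $[e^{t\Lambda_\alpha}\mu](\eta)$ extracted from the Orlicz-type condition \eqref{eq:SC32}. Applying the upper bound in Proposition~\ref{lem - Gaussian implies A1 A2}, I would estimate
$$[e^{t\Lambda_\alpha}\mu](\eta) \le \frac{C_2}{t^{Q/\alpha}} \int_{\mathbb{H}^N} g_\alpha\!\left(\frac{\dist_{\mathbb{H}}(\eta,\zeta)}{c_2 t^{1/\alpha}}\right)\,d\mu(\zeta),$$
and decompose $\mathbb{H}^N$ into the dyadic annuli $A_k = B(\eta,2^{k+1}t^{1/\alpha})\setminus B(\eta,2^k t^{1/\alpha})$ around $\eta$. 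On each $A_k$ the radial weight is essentially constant, and an Orlicz-type H\"older inequality paired with \eqref{eq:SC32} converts the resulting $\mu$-mass into a quantity controlled by $\rho(2^k t^{1/\alpha}/T^{1/\alpha})$. Summing the dyadic series---which telescopes precisely because $\rho$ has exactly the log profile built into the statement---yields
$$\sup_{\eta\in\mathbb{H}^N} [e^{t\Lambda_\alpha}\mu](\eta) \le C\gamma_E\, T^{-\frac{1}{p-1}}\left[\log\!\left(e+\frac{T}{t}\right)\right]^{-Q/\alpha},\qquad 0<t<T,$$
with $C$ independent of $\gamma_E$.

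Next, I would introduce the spatially constant majorant
$$V(t) := A\, T^{-\frac{1}{p-1}}\left[\log\!\left(e+\frac{T}{t}\right)\right]^{-Q/\alpha},\qquad 0<t<T.$$
Because the semigroup preserves constants, $e^{(t-s)\Lambda_\alpha}V(s)^p = V(s)^p$, and verifying that $V$ is a supersolution reduces to the scalar inequality $\int_0^t V(s)^p\,ds \le V(t)/2$. At $p = p_{\alpha,Q}$ one has $p/(p-1)=1+Q/\alpha$ and $(p-1)(Q/\alpha)=1$, so after elementary manipulation the scalar inequality amounts to
$$A^{p-1}\cdot \frac{t/T}{\log(e+T/t)}\lesssim 1,$$
which holds uniformly in $t\in(0,T)$ for $A$ fixed sufficiently small, since $x\mapsto x/\log(e+1/x)$ is bounded on $(0,1)$. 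Choosing $\gamma_E$ small enough that Step~1 gives $[e^{t\Lambda_\alpha}\mu]\le V(t)/2$, the Picard scheme $u_0\equiv 0$, $u_{n+1}(\eta,t):=[e^{t\Lambda_\alpha}\mu](\eta)+\int_0^t[e^{(t-s)\Lambda_\alpha}u_n(s)^p](\eta)\,ds$ is monotone and bounded above by $V$ by induction, and monotone convergence produces the desired solution $u=\lim u_n$ of problem \eqref{eq:Fujita} with \eqref{eq:Fujitaini} in $\mathbb{H}^N\times[0,T)$.

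The main obstacle is the linear estimate in Step~1: converting the Orlicz-type hypothesis \eqref{eq:SC32} into a pointwise bound on $e^{t\Lambda_\alpha}\mu$ with \emph{exactly} the borderline exponent $Q/\alpha$ on the logarithm. The dyadic decomposition must be matched against the growth of $\Psi_\beta^{-1}$ and the asymptotics of $\rho$, and the series of log-terms has to telescope to the correct power of $\log(e+T/t)$; anything weaker would fail to close against the scalar inequality in Step~2. In the Euclidean case this was carried out in \cite{HI18} using the explicit form of the fractional heat kernel, whereas here, since $G_\alpha$ is not known explicitly on $\mathbb{H}^N$, one must proceed entirely through the two-sided estimate of Proposition~\ref{lem - Gaussian implies A1 A2}. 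The case $\alpha=2$ demands additional care because $g_\alpha$ is Gaussian rather than polynomial, so the far-field annuli contribute through exponential rather than algebraic decay and the dyadic summation has to be reorganized accordingly.
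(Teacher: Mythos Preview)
Your Step~1 estimate is wrong, and the error propagates fatally. You claim
\[
\sup_{\eta}[e^{t\Lambda_\alpha}\mu](\eta)\le C\gamma_E\,T^{-\frac{1}{p-1}}\Bigl[\log\!\Bigl(e+\tfrac{T}{t}\Bigr)\Bigr]^{-Q/\alpha},
\]
but the right-hand side is bounded (in fact vanishes) as $t\to0^+$, while the theorem is designed precisely for data such as $\mu(\eta)\sim|\eta|_{\mathbb H^N}^{-Q}[\log(1/|\eta|_{\mathbb H^N})]^{-Q/\alpha-1}$, for which $e^{t\Lambda_\alpha}\mu$ blows up. Applying Jensen to \eqref{eq:SC32} and then Lemma~\ref{lem 1} gives the correct bound
\[
\|e^{t\Lambda_\alpha}\mu\|_{L^\infty}\lesssim \gamma_E\,t^{-Q/\alpha}\Bigl[\log\!\Bigl(e+\tfrac{T}{t}\Bigr)\Bigr]^{-Q/\alpha},
\]
with $t^{-Q/\alpha}$, not $T^{-1/(p-1)}=T^{-Q/\alpha}$. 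Once the prefactor is corrected, your spatially constant majorant must be $V(t)=At^{-Q/\alpha}[\log(e+T/t)]^{-Q/\alpha}$; but then $V(s)^p\sim s^{-pQ/\alpha}=s^{-1-Q/\alpha}$ is \emph{not} integrable near $s=0$, so $\int_0^tV(s)^p\,ds=\infty$ and the scalar inequality never holds. A spatially constant barrier cannot close the iteration at the Fujita exponent; this is exactly where the parameter $\beta>0$ in the hypothesis has to enter, and your scheme never uses it.

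The paper's proof takes a genuinely different route. It sets $w(\eta,t):=\Psi_{\beta,L}^{-1}\bigl[e^{t\Lambda_\alpha}\Psi_{\beta,L}(\mu)\bigr](\eta)$, where $\Psi_{\beta,L}(s)=s[\log(L+s)]^\beta$ with $L$ chosen so that $\Psi_{\beta,L}$ is convex and $s^p/\Psi_{\beta,L}(s)$, $\Psi_{\beta,L}(s)/s$ are increasing. Jensen gives $e^{t\Lambda_\alpha}\mu\le w(t)$, while the nonlinear term is factored as
\[
\int_0^t e^{(t-s)\Lambda_\alpha}\!\Bigl[\tfrac{w(s)^p}{\Psi_{\beta,L}(w(s))}\,e^{s\Lambda_\alpha}\Psi_{\beta,L}(\mu)\Bigr]\,\di s
\le\Bigl[\int_0^t\Bigl\|\tfrac{w(s)^p}{\Psi_{\beta,L}(w(s))}\Bigr\|_{L^\infty}\di s\Bigr]\,\Psi_{\beta,L}(w(t)).
\]
The extra logarithm shaves off a factor $[\log(e+1/s)]^{-\beta}$ in the integrand, yielding $\|w(s)^p/\Psi_{\beta,L}(w(s))\|_{L^\infty}\lesssim s^{-1}[\log(e+1/s)]^{-1-\beta}$, which \emph{is} integrable; a compensating factor $\|\Psi_{\beta,L}(w(t))/w(t)\|_{L^\infty}\lesssim[\log(e+1/t)]^\beta$ then closes $\mathcal F[w]\le 2w$ for $\gamma_E$ small. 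The Orlicz structure of $w$, not a constant-in-space barrier, is what makes the critical time integral converge.
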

As a corollary of Theorems \ref{main thm}, \ref{mainthm 3}, \ref{mainthm 4}, and \ref{Theorem:SC3}, we have
\begin{corollary}
\label{Cor:1.2}
Let $N\ge1$, $\alpha\in(0,2]$, and $p\ge p_{\alpha,Q}$.
Define
\begin{equation*}
\Phi_\alpha(\eta) := 
\left\{
\begin{array}{ll}
\displaystyle{|\eta|_{\mathbb{H}^N}^{-Q}\left[\log\left(e+\frac{1}{|\eta|_{\mathbb{H}^N}}\right)\right]^{-\frac{Q}{\alpha}-1}} & \mbox{if} \quad p=p_{\alpha,Q},\vspace{3pt}\\
\displaystyle{|\eta|_{\mathbb{H}^N}^{-\frac{\alpha}{p-1}}} \quad & \mbox{if} \quad p>p_{\alpha,Q}.\vspace{3pt}\\
\end{array}
\right.
\end{equation*}
Assume that $\mu(\eta) = \gamma \Phi_\alpha(\eta) +C_\alpha$ in $\mathbb{H}^N$ for some $\gamma\ge0$ and $C_\alpha\ge0$.
Then there exists $\gamma_*>0$ with the following properties: 
\begin{itemize}
\item[(1)] problem \eqref{eq:Fujita} with \eqref{eq:Fujitaini} possesses a local-in-time solution if $0\le\gamma<\gamma_*$;
\item[(2)] problem \eqref{eq:Fujita} with \eqref{eq:Fujitaini} possesses no local-in-time solutions if $\gamma>\gamma_*$.
\end{itemize}
In particular, if $p>p_{\alpha,Q}$ and  $C_\alpha=0$, the solution in the assertion {\rm (1)} is a global-in-time one. 
\end{corollary}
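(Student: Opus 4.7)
The strategy is to sandwich the threshold from above and below: Theorems \ref{mainthm 4} and \ref{Theorem:SC3} produce a local solution for all sufficiently small $\gamma$, while Theorem \ref{main thm} rules out any local solution for sufficiently large $\gamma$. A single $\gamma_*$ is then defined as
\[
\gamma_*:=\sup\{\gamma\ge 0:\text{problem \eqref{eq:Fujita}--\eqref{eq:Fujitaini} with }\mu=\gamma\Phi_\alpha+C_\alpha\text{ admits a local-in-time solution}\},
\]
which will be finite and positive by the two one-sided arguments. Downward closedness of this set---any local solution for $\gamma$ serves as a supersolution for every smaller $\gamma'$, hence yields a local solution for $\gamma'$ via the standard monotone iteration building a minimal solution from a supersolution---then gives property (1), while (2) is automatic from the definition of the supremum.

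For the existence half when $p>p_{\alpha,Q}$, I pick $\theta\in(1,Q(p-1)/\alpha)$, possible since $p-1>\alpha/Q$, and estimate
\[
\sup_{\zeta\in\mathbb{H}^N}\Bigl[\fint_{B(\zeta,\sigma)}|\eta|_{\mathbb{H}^N}^{-\alpha\theta/(p-1)}\,\di\eta\Bigr]^{1/\theta}\lesssim\sigma^{-\alpha/(p-1)}
\]
via a polar integration in $\mathbb{H}^N$ (volume factor $r^{Q-1}$), the worst base point being $\zeta=0$. Consequently \eqref{eq:SC2} is met whenever $\gamma$ is below an explicit multiple of $\gamma_D$ and $T$ is small enough that the term $C_\alpha T^{1/(p-1)}$ is absorbed; when $C_\alpha=0$ this constraint on $T$ disappears and Theorem \ref{mainthm 4} produces a global solution. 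When $p=p_{\alpha,Q}$, I apply Theorem \ref{Theorem:SC3} with some $\beta\in(0,Q/\alpha)$. A polar computation of $\int_0^\sigma r^{-1}\bigl[\log(e+1/r)\bigr]^{-Q/\alpha-1+\beta}\,\di r$, convergent precisely because $\beta<Q/\alpha$, gives
\[
\fint_{B(0,\sigma)}\Psi_\beta\bigl(T^{Q/\alpha}\gamma\Phi_\alpha(\eta)\bigr)\,\di\eta \lesssim \gamma\,T^{Q/\alpha}\sigma^{-Q}\bigl[\log(e+1/\sigma)\bigr]^{-Q/\alpha+\beta},
\]
using $1/(p-1)=Q/\alpha$. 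Inverting $\Psi_\beta$, which at infinity behaves like $y/[\log(e+y)]^\beta$, the spurious $[\log]^\beta$ factor cancels and reproduces exactly the profile $\gamma\cdot\rho(\sigma T^{-1/\alpha})$, so \eqref{eq:SC32} holds once $\gamma$ is small (the $C_\alpha$ contribution again being absorbed by choosing $T$ small).

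For the nonexistence half, if a local solution existed on $[0,T)$ then Theorem \ref{main thm} would force, at $\zeta=0$ and every small $\sigma$,
\[
\mu(B(0,\sigma))\le\gamma_A\sigma^{Q-\alpha/(p-1)}\quad(p>p_{\alpha,Q}),\qquad
\mu(B(0,\sigma))\le\gamma_A\bigl[\log(e+T^{1/\alpha}/\sigma)\bigr]^{-Q/\alpha}\quad(p=p_{\alpha,Q}).
\]
A direct polar integration of $\gamma\Phi_\alpha$ over $B(0,\sigma)$ produces, respectively, $c\gamma\sigma^{Q-\alpha/(p-1)}$ and $c\gamma\bigl[\log(e+1/\sigma)\bigr]^{-Q/\alpha}$ as $\sigma\to 0^+$, while the $C_\alpha$ piece contributes only $O(\sigma^Q)$ and is negligible. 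Hence $\gamma$ cannot exceed an explicit multiple of $\gamma_A$, which bounds the defining supremum above and completes the argument. The main obstacle is the critical case $p=p_{\alpha,Q}$: matching the Orlicz-type condition \eqref{eq:SC32} to the logarithmic profile $\rho$ requires the precise choice $\beta\in(0,Q/\alpha)$ and a careful inversion of $\Psi_\beta$ so as to absorb the extraneous $[\log]^\beta$ factor---everything else reduces to homogeneous volume computations and a standard comparison argument.
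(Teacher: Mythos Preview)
Your argument is correct and follows the same route as the paper: existence for small $\gamma$ via Theorems~\ref{mainthm 4} and~\ref{Theorem:SC3}, nonexistence for large $\gamma$ via the necessary conditions of Theorem~\ref{main thm}, with both the supercritical and critical cases handled by the same polar-type integrations over $B(0,\sigma)$. Your explicit definition of $\gamma_*$ as a supremum together with the downward-closedness argument (any solution for $\gamma$ is a supersolution for $\gamma'<\gamma$, hence Lemma~\ref{lem - supersolutions imply solution} applies), and your observation that one must take $\beta<Q/\alpha$ so that the integral $\int_0^\sigma r^{-1}[\log(e+1/r)]^{-Q/\alpha-1+\beta}\,\di r$ converges, are useful clarifications that the paper's terse proof leaves implicit.
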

From this corollary it can be seen that $\Phi_\alpha$ is an optimal singularity for the local-in-time solvability.
In particular, together with Remark \ref{Remark:1.2}, it can be seen that $p_{\alpha,Q} = 1+ \alpha/Q$ is the Fujita-exponent.

The rest of this paper is organized as follows. 
In Section~2 we collect properties of $\mathbb{H}^N$ and $G_\alpha$ and prepare some preliminary lemmas. 
In Section~3 we prove \eqref{eq- mainthm 1} in Theorem~\ref{main thm} and Theorem~\ref{mainthm 2}.
In Section~4 we prove assertions (i)--(iii) in Theorem~\ref{main thm} and  complete the proof of Theorem~\ref{main thm}.
In Section~5 we prove Theorems~\ref{mainthm 3}--\ref{Theorem:SC3}.
In Section~6, as an application of our theorems, we obtain estimates of the life span of solutions of problem \eqref{eq:Fujita} with  small initial data.

\section{Preliminaries.}
In what follows, the letters $C$ and $C'$ denote  generic positive constants depending only on $N$, $\alpha$, and $p$.
For any two nonnegative functions $f_1$ and $f_2$ defined on a subset $D\subset \mathbb{R}$, 
we write $f_1(\tau) \lesi  f_2(\tau)$ for all $\tau \in D$ if 
$f_1(\tau) \le C f_2(\tau)$ for all $\tau \in D$,
and
we write $f_1(\tau) \sim f_2(\tau)$ for all $\tau \in D$ if 
$ f_1(\tau) \lesi  f_2(\tau)$ and $ f_2(\tau) \lesi  f_1(\tau)$ for all $\tau \in D$.
Furthermore, for $A,B \ge0$, we write  $A\simeq B$ if  $C B \le A \le C' B$ for some constants $0<C<C'$.
\subsection{Basic properties of $\mathbb{H}^N$ and  $G_\alpha$.}

In this subsection we collect properties of the Heisenberg group
$\mathbb{H}^N$ and the fundamental solution $G_\alpha$.
The following lemma is used when we calculate  integrals in the Heisenberg group $\mathbb{H}^N$ 
and is well-known (see e.g. \cites{BHQ24, P98}). Therefore, we omit the proof.
\begin{lemma}
Let $N\ge1$,  $\zeta\in\mathbb{H}^N$, $a>0$,  $f:[0,\infty)\to[0,\infty)$ be a continuous function. Then one has
\begin{equation}
\label{eq:6.1}
\int_{B(\zeta,a)} f(| \zeta^{-1}\circ\eta|_{\mathbb{H}^N}) \, \di \eta 
\simeq \int_{0}^{a^2} f(\sqrt{r}) r^N \, \di r.
\end{equation}
Moreover,  one has
\begin{equation}
\label{eq:6.2}
\int_{B(\zeta,a)}  \, \di \eta  = |B(\zeta,a)| = |B(0,a)| \simeq a^Q.
\end{equation}
\end{lemma}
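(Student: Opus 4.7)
The plan is to reduce to the case $\zeta=0$ using left-invariance of the Haar measure on $\mathbb{H}^N$, and then to exploit the homogeneous-group dilation structure via polar coordinates adapted to $|\cdot|_{\mathbb{H}^N}$.

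First, since the Haar measure on $\mathbb{H}^N$ coincides with Lebesgue measure on $\mathbb{R}^{2N+1}$, the left translation $\eta\mapsto\zeta\circ\eta$ has unit Jacobian in Cartesian coordinates. Substituting $\eta=\zeta\circ\eta'$ gives $|\zeta^{-1}\circ\eta|_{\mathbb{H}^N}=|\eta'|_{\mathbb{H}^N}$ and $\eta\in B(\zeta,a)\Leftrightarrow\eta'\in B(0,a)$, whence
\[
\int_{B(\zeta,a)} f(|\zeta^{-1}\circ\eta|_{\mathbb{H}^N}) \, \di\eta \;=\; \int_{B(0,a)} f(|\eta'|_{\mathbb{H}^N}) \, \di\eta',
\]
so it suffices to prove \eqref{eq:6.1} and \eqref{eq:6.2} with $\zeta=0$.

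Next I would use the parabolic dilations $\delta_\rho(x,y,\tau)=(\rho x,\rho y,\rho^2\tau)$, which are group automorphisms satisfying $|\delta_\rho(\eta)|_{\mathbb{H}^N}=\rho|\eta|_{\mathbb{H}^N}$ and whose Jacobian on $\mathbb{R}^{2N+1}$ equals $\rho^{2N+2}=\rho^Q$. The substitution $\eta=\delta_a(\eta')$ immediately yields $|B(0,a)|=a^Q |B(0,1)|$, which is \eqref{eq:6.2}. For \eqref{eq:6.1}, the standard polar decomposition on a homogeneous group (cf.\ Folland--Stein) supplies a finite Radon measure $\sigma$ on the unit sphere $\Sigma=\{\omega:|\omega|_{\mathbb{H}^N}=1\}$ such that for every nonnegative measurable $g$,
\[
\int_{\mathbb{H}^N} g(\eta) \, \di\eta \;=\; \int_0^\infty \int_\Sigma g(\delta_\rho\omega) \, \rho^{Q-1} \, \di\sigma(\omega) \, \di\rho.
\]
Taking $g(\eta)=\chi_{B(0,a)}(\eta)\,f(|\eta|_{\mathbb{H}^N})$ and then substituting $r=\rho^2$ produces
\[
\int_{B(0,a)} f(|\eta'|_{\mathbb{H}^N}) \, \di\eta' \;=\; \sigma(\Sigma) \int_0^a f(\rho) \rho^{2N+1} \, \di\rho \;=\; \frac{\sigma(\Sigma)}{2} \int_0^{a^2} f(\sqrt{r}) r^N \, \di r,
\]
which is \eqref{eq:6.1} (in fact with the explicit constant $\sigma(\Sigma)/2$).

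The only genuine technical point is the polar-coordinate formula on $\mathbb{H}^N$. If one wishes to avoid appealing to the general homogeneous-group result, a direct alternative is to work in Cartesian coordinates: writing $s=(|x|^2+|y|^2)^{1/2}$, performing the angular integration on $S^{2N-1}$ to extract an $s^{2N-1}\,\di s\,\di\tau$ factor, and then passing from $(s,\tau)$ to $(\rho,\theta)$ by $s=\rho\sqrt{\cos\theta}$, $\tau=\rho^2\sin\theta$ with $\rho=|\eta|_{\mathbb{H}^N}$; the Jacobian recovers $\rho^{Q-1}\,\di\rho$, after which the change $r=\rho^2$ finishes the proof.
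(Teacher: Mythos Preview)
Your proof is correct. The paper itself omits the proof of this lemma entirely, declaring it well-known and citing \cite{BHQ24} and \cite{P98}; your argument via left-invariance of Haar measure, the dilation $\delta_a$, and the standard polar decomposition on homogeneous groups (Folland--Stein) is precisely the canonical route one finds in those references, and your explicit Cartesian alternative with the change of variables $s=\rho\sqrt{\cos\theta}$, $\tau=\rho^2\sin\theta$ is a nice self-contained verification of the polar formula in this specific case.
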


For $\lambda>0$ and $\eta = (x,y,\tau)\in \mathbb{H}^N$, define
 	\[
 	\delta_\lambda(\eta) := (\lambda x, \lambda y , \lambda^2\tau ).
 	\]
The following properties are taken from \cites{FS, MPS}.
Given $\alpha \in (0, 2]$, the function $ G_\alpha$ has the following properties:

\begin{align}
	&\notag G_\alpha \in C^{\infty}(\HH^N\times(0,\infty)),\\
	&\label{eq:G2}G_\alpha(\eta,t) = G_\alpha(\eta^{-1},t),\\	
	&\label{eq:G3}G_\alpha(\delta_{\lambda}(\eta),\lambda^{\alpha} t) = \lambda^{-Q} G_\alpha(\eta,t),\\
	& \label{eq:G4}G_\alpha(\eta,t) = \int_{\mathbb{H}^N}  G_\alpha(\zeta^{-1}\circ \eta, t-s) G_\alpha(\zeta,s) \, \di \zeta,\\
	&\label{eq:G5}\displaystyle{\int_{\HH^N} G_\alpha(\eta,t) \, \di \eta = 1},
\end{align}
for all $\eta\in \mathbb{H}^N$, $0<s<t$, and $\lambda>0$.

Since $X^*_i = - X_i$, $Y_i^*=-Y_i$ for $i=1,\ldots, N$ and $\Delta_\HH = \sum_{i=1}^N (X_i^2 +Y_i^2)$, by integration by parts we have: 
	\begin{equation}
		\label{eq:IBP}
		\int_{\mathbb{H}^N} -\Delta_\mathbb{H} \phi(\eta) \cdot \psi(\eta) \, \di \eta
		= \int_{\mathbb{H}^N} \phi(\eta) ( -\Delta_\mathbb{H})\psi(\eta) \, \di \eta
	\end{equation}
	for all $\phi, \psi \in C_0^\infty(\mathbb{H}^N)$.

\subsection{A covering lemma and the Hardy-Littlewood maximal function.}
For $\eta\in\mathbb{H}^N$ and a nonempty subset $A\subset \mathbb{H}^N$,
set
\[
\Dist(\eta,A) := \inf\{\dist_\mathbb{H}(\eta,\overline{\eta}): \overline{\eta}\in A^c\}.
\]

\begin{lemma}\label{lem-covering lemma}
\begin{enumerate}
\item[\rm (a)] Let $0<r<R<\vc$ and $\eta\in \HH^N$. Then there exists an universal constant $C$ such that we can find a family of balls $\{B_k:=B(\eta_k,r): \eta_k\in B(\eta,R), k\in I\}$ for some countable family of indices $I$ such that
		\begin{itemize}
			\item[(i)] $\displaystyle{B(\eta,R)\subset \bigcup_{k\in I}B_k}$;
			\item[(ii)] $\displaystyle \sharp I \le C\left(R/r\right)^Q$.
		\end{itemize} 
\item[\rm (b)] Let  $R>0$ and $\eta_0\in \mathbb{H}^N$. Then for each $k=2,3,\ldots$, we can find a family of balls $\{B^k_j:=B(\eta^k_j, R): j\in J\}$ for some countable family of indices $J$ such that 
		\begin{itemize}
			\item[(i)] $\displaystyle{B(\eta_0,2^{k+1}R)\backslash B(\eta_0,2^{k}R)\subset \bigcup_{j\in J}B^k_j}$;
			\item[(ii)] ${\rm dist}(\eta_0,B^k_j)\simeq 2^{k}R$ for each $j\in J$ and $k\ge 2$;
 			\item[(iii)] $\displaystyle \sharp J \le C2^{kQ}$, where $C$ is a constant independent of $k, R$, and $\eta_0$.
		\end{itemize}
	\end{enumerate}
Here, for any set $I$, $\sharp I$ denotes the cardinal number of $I$.
\end{lemma}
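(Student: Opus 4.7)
Both parts are standard Vitali-type covering arguments; the only structure of $\HH^{N}$ I will use is that $\dH$ is a left-invariant metric and that ball volumes are homogeneous of degree $Q$, i.e.\ $|B(\eta,a)|\simeq a^{Q}$ as recorded in \eqref{eq:6.2}.

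For part (a) I would run a greedy selection: choose a maximal family $\{\eta_{k}\}_{k\in I}\su B(\eta,R)$ with the property that the half-radius balls $B(\eta_{k},r/2)$ are pairwise disjoint (such a family exists since $B(\eta,R)$ is separable). Maximality forces every $\zeta\in B(\eta,R)$ to satisfy $B(\zeta,r/2)\cap B(\eta_{k},r/2)\ne\emptyset$ for some $k$, so $\dH(\zeta,\eta_{k})<r$ and $\zeta\in B_{k}$, which gives (i). For the count (ii), the disjoint half-radius balls are all contained in $B(\eta,R+r/2)\su B(\eta,2R)$, so adding volumes via \eqref{eq:6.2} produces
\[
\sharp I\cdot (r/2)^{Q}\;\lesi\;\sum_{k\in I}|B(\eta_{k},r/2)|\;\le\;|B(\eta,2R)|\;\lesi\;R^{Q},
\]
whence $\sharp I\lesi(R/r)^{Q}$.

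For part (b) I would apply part (a) with the ambient ball $B(\eta_{0},2^{k+1}R)$ and selection radius $R$, producing at most $C(2^{k+1}R/R)^{Q}\lesi 2^{kQ}$ balls of radius $R$ covering $B(\eta_{0},2^{k+1}R)$; this is (iii). I would then discard every selected ball that does not meet the annulus $B(\eta_{0},2^{k+1}R)\setminus B(\eta_{0},2^{k}R)$ and relabel the survivors as $B_{j}^{k}=B(\eta_{j}^{k},R)$, $j\in J$. The survivors still cover the annulus, giving (i). For (ii), a surviving ball contains some point $\zeta$ with $2^{k}R\le\dH(\eta_{0},\zeta)<2^{k+1}R$, so the triangle inequality yields $2^{k}R-R\le\dH(\eta_{0},\eta_{j}^{k})\le 2^{k+1}R+R$, and since $k\ge 2$ the additive $R$ is absorbed into the main term to give $\dH(\eta_{0},\eta_{j}^{k})\simeq 2^{k}R$; then $\Dist(\eta_{0},B_{j}^{k})=\dH(\eta_{0},\eta_{j}^{k})-R\simeq 2^{k}R$ as well.

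There is no substantive obstacle: this is a purely metric/measure-theoretic statement and nothing specific to the sub-Laplacian enters. The only point requiring a moment of care is the hypothesis $k\ge 2$ in (b), which is exactly what is needed to absorb the additive $R$ so that $\dH(\eta_{0},\eta_{j}^{k})\in[2^{k}R-R,\,2^{k+1}R+R]$ collapses to the clean comparison $\simeq 2^{k}R$ in conclusion (ii).
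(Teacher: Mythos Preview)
Your proposal is correct and uses essentially the same Vitali/packing-plus-volume-count argument as the paper. The only cosmetic difference is that for (b) the paper applies Vitali's covering lemma directly to the annulus (taking centers $\eta_j^k$ in $B(\eta_0,2^{k+1}R)\setminus B(\eta_0,2^kR)$ and disjoint balls of radius $R/5$), so that (ii) is immediate from the choice of centers, whereas you cover the full ball $B(\eta_0,2^{k+1}R)$ via part (a) and then prune, recovering (ii) by the short triangle-inequality computation you wrote; both routes give the same count $\sharp J\lesssim 2^{kQ}$ by comparing volumes against $|B(\eta_0,2^{k+1}R)|\simeq (2^kR)^Q$.
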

\begin{proof}
	Since the proof of (a) is similar to that of (b) and even easier, we need only to prove (b).
	Fix $k \ge 2$. We consider the following family of balls $\{B(\eta,R/5): \eta
	\in B(\eta_0,2^{k+1}R)\backslash B(\eta_0,2^{k}R)\}$, which covers $B(\eta_0,2^{k+1}R)\backslash B(\eta_0,2^{k}R)$. By Vitali's covering lemma, we can extract a disjoint family of balls $\{ B(\eta^k_j, R/5):j\in J\}$ for some countable family of indices $J$ satisfying \[
	B(\eta_0,2^{k+1}R)\backslash B(\eta_0,2^{k}R)\subset \bigcup_{j\in J}B(\eta^k_j,R).
	\]
	By the construction, it is straightforward that the family $\{B^k_j:=B(\eta^k_j, R):j\in J\}$ satisfies (i) and (ii). In addition, by \eqref{eq:6.2},
	\[
	\begin{aligned}
		(2^kR)^Q&\gtrsim  |B(\eta_0,2^{k+1}R+R/5)\backslash B(\eta_0,2^{k}R-R/5)| \ge 
		\sum_{j\in J}|B(\eta_j^k,R/5)|\gtrsim R^Q \times \sharp J,
	\end{aligned}
	\]
	which implies (iii).
	This competes our proof.
\end{proof}
Recall that the Hardy-Littlewood maximal function $\mathcal M$ is defined by 
\[
\mathcal M f(\eta) =\sup_{B\ni \eta} \f{1}{r_B^Q}\int_{B} f(\zeta)\di \zeta,
\]
where the supremum is taken over all balls $B$ containing $\eta$ and $r_B>0$ is the radius of $B$. It is well-known that $\mathcal M$ is bounded on $L^p(\HH^N)$ for $1<p\le \vc$.

We have the following result whose proof is quite elementary and will be omitted.
\begin{lemma}\label{lem-maximal function}
	For $\epsilon>0$, there exists $C>0$ such that
	\[
	\int_{\HH^N} \f{1}{t^{Q/\alpha}}\left(1+\f{\dist_\mathbb{H}(\eta,\zeta)}{t^{1/\alpha}}\right)^{-(Q+\epsilon)}|f(\zeta)|\di \zeta \le C\mathcal M f(\eta)
	\]
	for all $\eta\in \HH^N$, $t>0$ and $f\in L^1_{\rm loc}(\mathbb H^N)$.
\end{lemma}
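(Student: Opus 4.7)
The plan is a standard dyadic decomposition around $\eta$, at the natural scale $r := t^{1/\alpha}$. Split the integral as
\[
\int_{\mathbb{H}^N} = \int_{B(\eta,r)} + \sum_{k=0}^{\infty} \int_{A_k}, \qquad A_k := B(\eta,2^{k+1}r)\setminus B(\eta,2^k r),
\]
and estimate the weight $W(\zeta) := r^{-Q}(1+\dist_\mathbb{H}(\eta,\zeta)/r)^{-(Q+\epsilon)}$ on each piece. On $B(\eta,r)$ the factor $(1+\dist_\mathbb{H}(\eta,\zeta)/r)^{-(Q+\epsilon)} \le 1$, while on $A_k$ it is bounded by $(1+2^k)^{-(Q+\epsilon)} \lesssim 2^{-k(Q+\epsilon)}$.

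First, using \eqref{eq:6.2} which gives $|B(\eta,r)| \simeq r^Q$,
\[
\int_{B(\eta,r)} W(\zeta) |f(\zeta)|\,\di\zeta \;\le\; \frac{1}{r^Q}\int_{B(\eta,r)}|f(\zeta)|\,\di\zeta \;\lesssim\; \fint_{B(\eta,r)} |f(\zeta)|\,\di\zeta \;\le\; \mathcal{M}f(\eta).
\]
Next, for each $k\ge 0$, using $A_k\subset B(\eta,2^{k+1}r)$ and $|B(\eta,2^{k+1}r)| \simeq (2^{k+1}r)^Q$,
\[
\int_{A_k} W(\zeta)|f(\zeta)|\,\di\zeta
\;\lesssim\; \frac{2^{-k(Q+\epsilon)}}{r^Q}\int_{B(\eta,2^{k+1}r)}|f(\zeta)|\,\di\zeta
\;\lesssim\; 2^{-k\epsilon}\,\fint_{B(\eta,2^{k+1}r)}|f(\zeta)|\,\di\zeta
\;\le\; 2^{-k\epsilon}\mathcal{M}f(\eta),
\]
since $\eta\in B(\eta,2^{k+1}r)$.

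Summing the geometric series $\sum_{k\ge 0} 2^{-k\epsilon} = (1-2^{-\epsilon})^{-1} < \infty$ (which is precisely where the assumption $\epsilon>0$ enters), we obtain
\[
\int_{\mathbb{H}^N} W(\zeta)|f(\zeta)|\,\di\zeta \;\le\; C\,\mathcal{M}f(\eta),
\]
with $C$ depending only on $Q$ and $\epsilon$. No step presents a real obstacle; the only thing to be careful about is using $(1+s)^{-(Q+\epsilon)} \le 1$ on the inner ball (rather than trying to use a pure power), so that the bound is integrable near $\zeta=\eta$, and using \eqref{eq:6.2} to convert the measure of Heisenberg balls into powers of their radii.
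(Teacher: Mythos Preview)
Your argument is correct and is precisely the standard dyadic-annulus decomposition one expects here; the paper in fact omits the proof entirely, calling it ``quite elementary,'' so there is nothing to compare against. One cosmetic remark: with the paper's definition $\mathcal{M}f(\eta)=\sup_{B\ni\eta} r_B^{-Q}\int_B |f|$, the quantity $r^{-Q}\int_{B(\eta,r)}|f|$ is already bounded by $\mathcal{M}f(\eta)$ directly, so the intermediate passage through $\fint$ is unnecessary (though harmless, since $|B(\eta,r)|\simeq r^Q$ by \eqref{eq:6.2}).
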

\subsection{Some kernel estimates.} \label{Some kernel estimates}

In this subsection, we obtain estimates of the fundamental solution $G_\alpha$ and collect the basic properties of $\Lambda_\alpha$.

 First, we prove Proposition~\ref{lem - Gaussian implies A1 A2} in Subsection~\ref{Semilinear heat equations}.

\begin{proof}[Proof of Proposition~{\rm \ref{lem - Gaussian implies A1 A2}}]
	The case $\alpha=2$ is well-known. In fact, we have 
	\begin{equation}
		\label{eq- Gaussian of ptxy}
		\f{C_1}{t^{Q/2}}\exp\left(-\f{|\eta|_{\HH^N}^2}{c_1t}\right)\le G(\eta,t)\le \f{C_2}{t^{Q/2}}\exp\left(-\f{|\eta|_{\HH^N}^2}{c_2t}\right) 
	\end{equation}
	for all $\eta\in \HH^N$ and $t>0$. See e.g. \cite{FS}.
	
	It remains to prove for the case $\alpha\in (0,2)$.	
	We use the following subordination formula in \cite{G}:
	\begin{equation}\label{eq-sub formula}
	e^{t\Lambda_\alpha}=\int_0^\vc e^{s \Lambda_2 }\phi_t^\alpha(s)\, \di s,
	\end{equation}
	where the function $\phi^\alpha_t:[0,\infty) \to [0,\infty)$  satisfies the following properties:
	\begin{enumerate}[{\rm (i)}]
		\item  $\phi^\alpha_t(s)\ge 0$ and $\displaystyle{\int_0^\vc\phi_t^\alpha(s) \,\di s =1}$;
		\item $\phi_t^\alpha(s)=t^{-\f{2}{\alpha}}\phi_1^\alpha(st^{-\f{2}{\alpha}})$;
		\item $\displaystyle{\phi_t^\alpha(s)\le  \f{Ct}{s^{1+\alpha/2}}}$ for all $s,t>0$;
		\item $\displaystyle{\phi_t^\alpha(s)\sim \f{t}{s^{1+\alpha/2}}}$ for all $s\geqslant t^{\f{2}{\alpha}}>0$;
		\item $\displaystyle{\int_0^\vc s^{-\gamma}\phi_1^\alpha(s) \, \di s<\vc}$ for all $\gamma>0$.
	\end{enumerate}
	We first establish an upper bound for $G_\alpha(\eta,t)$.  By \eqref{eq- Gaussian of ptxy}, \eqref{eq-sub formula}, (iii), and (iv), we have
	\[
	\begin{aligned}
		G_\alpha(\eta,t)\lesi \int_0^{\vc}\f{t}{s^{(Q+\alpha)/2}}\exp\Big(-\f{|\eta|_{\HH^N}^2}{c_2s}\Big)  \f{\di s}{s}.
	\end{aligned}
	\]
	\textbf{Case 1: $| \eta|_{\HH^N}\ge t^{1/\alpha}$.} Using the change of variable $u= |\eta|_{\HH^N}^{2}/s$,
	\[
	\begin{aligned}
		G_\alpha(\eta,t)&\lesi \int_0^{\vc}\f{t u^{{(Q+\alpha)/2}}}{| \eta|_{\HH^N}^{Q+\alpha}}\exp\left(-\f{u}{c_2}\right)  \f{\di u}{u}\\
		&\lesi \f{t}{|\eta|_{\HH^N}^{Q+\alpha}} \simeq  \f{1}{t^{Q/\alpha}}\left(1+\f{| \eta|_{\HH^N}}{t^{1/\alpha}}\right)^{-(Q+\alpha)}.
	\end{aligned}
	\]
	\textbf{Case 2: $|\eta|_{\HH^N}< t^{1/\alpha}$.} In this case, by \eqref{eq-sub formula},   \eqref{eq- Gaussian of ptxy}, and (ii),
	\[
	\begin{aligned}
				G_\alpha(\eta,t)&\lesi \int_0^{\vc} \f{1}{s^{Q/2}}t^{-\frac{2}{\alpha}}\phi_1^\alpha(st^{-\frac{2}{\alpha}}) \, \di s\\
				&\simeq \int_0^{\vc} \f{t^{-{Q/\alpha}}}{u^{Q/2}} \phi_1^\alpha(u) \, \di u\\
				&\lesi t^{-\f{Q}{\alpha}}\simeq \f{1}{t^{Q/\alpha}}\left(1+\f{| \eta|_{\HH^N}}{t^{1/\alpha}}\right)^{-(Q+\alpha)},
	\end{aligned}
	\]
	where in the second line we used the change of variable $u=st^{-2/\alpha}$ and in the last inequality we used (v).
	We have proved that 
	\[
	G_\alpha(\eta,t)\lesi \f{1}{t^{Q/\alpha}}\left(1+\f{| \eta|_{\HH^N}}{t^{1/\alpha}}\right)^{-(Q+\alpha)}
	\] 
	for $\zeta\in \HH^N$ and  $t>0$, which is the upper bound in \eqref{A1}.
	
	It remains to prove the lower bound in \eqref{A1} for $G_\alpha(\eta,t)$. Indeed, by \eqref{eq- Gaussian of ptxy}, \eqref{eq-sub formula},  and (iv), 
	\[
	\begin{aligned}
		G_\alpha(\eta,t)&\gtrsim \int_{t^{2/\alpha}}^{\vc}\f{t}{s^{(Q+\alpha)/2}}\exp\left(-\f{| \eta|_{\HH^N}^2}{c_1s}\right)  \f{ \di s}{s}.
	\end{aligned}
	\]
By \eqref{eq-sub formula} and (iv),
\[
\begin{aligned}
	G_\alpha(\eta,t)&\gtrsim \int_{t^{2/\alpha}}^{\vc}\f{t}{s^{(Q+\alpha)/2}}\exp\left(-\f{| \eta|_{\HH^N}^2}{c_1s}\right)  \f{ \di s}{s}\\
	&\gtrsim \int_{t^{2/\alpha}+|\eta|_{\HH^N}^2}^{\vc} \f{1}{s^{Q/2}}\f{t}{s^{1+\alpha/2}}\, \di s\\
	&\simeq \min\left\{ t^{-\f{Q}{\alpha}},\f{t}{|\eta|_{\HH^N}^{Q+\alpha}}\right\}=\min\left\{ t^{-\f{Q}{\alpha}},t^{-\f{Q}{\alpha}}\left(\f{t^{\f{1}{\alpha}}}{|\eta|_{\HH^N}}\right)^{Q+\alpha}\right\}\\
	&\simeq \f{1}{t^{Q/\alpha}}\left(1+\f{| \eta|_{\HH^N}}{t^{1/\alpha}}\right)^{-(Q+\alpha)}. 
\end{aligned}
\]
	This completes our proof.
\end{proof}

\begin{lemma}\label{lem-time derivaitive of heat kernel}
	For each $t>0$ and for any $\epsilon\in (0,\alpha)$ there exist $C, C'>0$ such that 
	\[
		|\partial_t G_\alpha(\eta,t)|\le 
		 \left\{
	\begin{aligned}
		&\f{C}{t^{Q/\alpha+1}}\left(1+\f{|\eta|_{\HH^N}}{t^{{1/\alpha}}}\right)^{-(Q+\alpha-\epsilon)} \quad
		&& \mbox{if} \quad \alpha \in (0,2),\\
		&\f{C}{t^{Q/2+1}}\exp\left( -\f{|\eta|_{\HH^N}^2}{C't}\right) \quad
		&& \mbox{if} \quad  \alpha=2,
	\end{aligned}
	\right.
	\]
	for $\eta\in \HH^N$ and $t>0$.
	Consequently, for $t>0$ we have
	\[
	|t\Lambda_\alpha [e^{t\Lambda_\alpha}f](\eta)|\lesi \mathcal Mf(\eta)
	\]
	for all $\eta\in \HH^N$ and $f\in L^1_{\rm loc}(\HH^N)$.
\end{lemma}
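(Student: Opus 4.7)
The plan is to reduce the second assertion to the first: we have the kernel representation
\[
t\Lambda_\alpha[e^{t\Lambda_\alpha}f](\eta) = t\int_{\HH^N}\partial_t G_\alpha(\zeta^{-1}\circ\eta,t)\,f(\zeta)\,\di\zeta,
\]
so once the pointwise bound on $|\partial_t G_\alpha|$ is in hand, Lemma~\ref{lem-maximal function} (applied with its $\epsilon$ equal to $\alpha-\epsilon$ in the subcritical case, and with any fixed $\epsilon$ in the Gaussian case after the standard dominated-convergence-type argument that turns a Gaussian tail into $(1+s)^{-(Q+\epsilon)}$) immediately yields the maximal estimate. So the essential task is proving the kernel bound.

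The case $\alpha=2$ is classical: the time derivative of the sub-Laplacian heat kernel on a stratified Lie group satisfies the Gaussian bound $|\partial_t G(\eta,t)|\lesssim t^{-Q/2-1}\exp(-|\eta|_{\HH^N}^2/(C't))$, obtained from \eqref{eq- Gaussian of ptxy} by elementary scaling/semigroup arguments (see e.g. [FS]); I would simply quote this.

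For $\alpha\in(0,2)$, the plan is to differentiate the subordination formula \eqref{eq-sub formula} using the scaling property (ii). Changing variables $u=st^{-2/\alpha}$ gives
\[
G_\alpha(\eta,t)=\int_0^\infty G(\eta,ut^{2/\alpha})\,\phi_1^\alpha(u)\,\di u,
\]
and differentiating under the integral and invoking the Gaussian bound on $\partial_sG$ recalled above leads to
\[
|\partial_tG_\alpha(\eta,t)|\lesssim t^{-Q/\alpha-1}\int_0^\infty u^{-Q/2}\exp\!\bigl(-c r^2/u\bigr)\phi_1^\alpha(u)\,\di u,\qquad r:=|\eta|_{\HH^N}/t^{1/\alpha}.
\]
To control the integral, I would distinguish $r\le 1$ (bound the exponential by $1$ and use property (v)) and $r>1$ (use $e^{-x}\le C_\sigma x^{-\sigma}$ with $\sigma=(Q+\alpha-\epsilon)/2$). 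In the latter case the integral becomes $C r^{-(Q+\alpha-\epsilon)}\int_0^\infty u^{-Q/2+\sigma}\phi_1^\alpha(u)\,\di u$, which converges at $0$ since $\sigma>Q/2$ and at $\infty$ by the tail $\phi_1^\alpha(u)\sim u^{-1-\alpha/2}$ from (iv), because $\sigma<(Q+\alpha)/2$. Patching the two ranges gives the claimed $(1+r)^{-(Q+\alpha-\epsilon)}$ decay.

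The main obstacle, and the reason for the $\epsilon$-loss, is precisely the extra polynomial factor $u$ that appears from the chain rule when differentiating $G(\eta,ut^{2/\alpha})$ in $t$: after the change of variables this costs one positive power of $u$ inside the integral against $\phi_1^\alpha$, forcing $\sigma$ to be strictly less than $(Q+\alpha)/2$ for convergence at infinity. I do not see a way to recover the borderline decay $(1+r)^{-(Q+\alpha)}$ with this subordination approach, but any strictly smaller exponent is enough for all downstream applications via Lemma~\ref{lem-maximal function}.
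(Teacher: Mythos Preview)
Your argument is correct and establishes the lemma as stated, but it takes a different route from the paper. The paper simply invokes \cite[Lemma~2.5]{CD}, an abstract result which, given the upper bound \eqref{A1} for $G_\alpha$, delivers the stated bound on $\partial_t G_\alpha$ as a black box; then Lemma~\ref{lem-maximal function} gives the maximal inequality exactly as you describe. Your approach, by contrast, is self-contained: you differentiate the subordination formula and estimate the resulting integral directly. Both are legitimate, and yours has the virtue of not requiring the reader to look up the content of \cite{CD}.

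One correction to your final paragraph, however: your diagnosis of the $\epsilon$-loss is off. After the chain rule, the factor $u$ you pick up is exactly cancelled by the extra $s^{-1}$ in $|\partial_s G(\eta,s)|\lesssim s^{-Q/2-1}e^{-|\eta|^2/(C's)}$ compared to $G$ itself, so the integral you arrive at,
\[
\int_0^\infty u^{-Q/2}\exp\!\bigl(-cr^2/u\bigr)\phi_1^\alpha(u)\,\di u,
\]
is precisely the same integral that arises when bounding $G_\alpha$ in the proof of Proposition~\ref{lem - Gaussian implies A1 A2}. If instead of replacing $e^{-x}$ by $x^{-\sigma}$ you use property (iii) of $\phi_1^\alpha$ (namely $\phi_1^\alpha(u)\lesssim u^{-1-\alpha/2}$) and then substitute $w=r^2/u$, you recover the full $(1+r)^{-(Q+\alpha)}$ decay with no $\epsilon$ needed. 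In other words, your subordination method actually proves a sharper bound than the lemma states; the $\epsilon$-loss in the statement is an artifact of what the abstract result in \cite{CD} yields, not an intrinsic limitation of the subordination approach.
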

\begin{proof}
	The upper bounds for $|\partial_tG_\alpha(\eta,t)|$ are just a direct consequence of \cite[Lemma~2.5]{CD} and the upper bound \eqref{A1}. This together with Lemma~\ref{lem-maximal function} yields
	\[
	|t\Lambda_\alpha[e^{t\Lambda_\alpha}f](\eta)|
 =|t\partial_t[e^{t\Lambda_\alpha}f](\eta)|
 \lesi \mathcal Mf(\eta)
	\]
	for all $\eta\in \HH^N$,  $t>0$, and $f\in L^1_{\rm loc}(\HH^N)$, as desired.
	This completes our proof.
\end{proof}

\begin{lemma}
	\label{lem-maximal function domination}
	Let $N\ge1$.
	For every $t>0$, we have 
	\[
	\|e^{t\Lambda_\alpha}f\|_{L^\infty(\HH^N)}\le \|f\|_{L^\infty(\HH^N)}
	\]
	for all $f\in L^\vc(\HH^N)$.
\end{lemma}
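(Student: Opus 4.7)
The plan is to exploit the integral-kernel representation
\[
[e^{t\Lambda_\alpha}f](\eta) = \int_{\HH^N} G_\alpha(\zeta^{-1}\circ\eta,t)\, f(\zeta)\, \di\zeta,
\]
together with the positivity and unit mass of $G_\alpha$. Concretely, the two ingredients I will combine are:
first, the lower bound in Proposition~\ref{lem - Gaussian implies A1 A2} yields $G_\alpha(\cdot,t) \ge 0$; second, property \eqref{eq:G5} gives $\int_{\HH^N} G_\alpha(\cdot,t) = 1$. These say that $G_\alpha(\cdot,t)$ is a probability density for each fixed $t>0$, and therefore convolution against it is an $L^\infty$-contraction.

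First I would pass the absolute value inside the integral and pull out $\|f\|_{L^\infty}$:
\[
|[e^{t\Lambda_\alpha}f](\eta)| \le \|f\|_{L^\infty(\HH^N)} \int_{\HH^N} G_\alpha(\zeta^{-1}\circ\eta,t)\, \di\zeta.
\]
Then I must evaluate the remaining integral, and this is the only mildly delicate step because the group is non-Abelian. I would apply the symmetry \eqref{eq:G2}, writing $G_\alpha(\zeta^{-1}\circ\eta,t)=G_\alpha(\eta^{-1}\circ\zeta,t)$, and then change variables $\zeta'=\eta^{-1}\circ\zeta$ using the left-invariance of the Haar measure on $\HH^N$. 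This reduces the integral to $\int_{\HH^N}G_\alpha(\zeta',t)\,\di\zeta'=1$ by \eqref{eq:G5}. Taking the essential supremum over $\eta\in\HH^N$ yields the claim.

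I do not anticipate a genuine obstacle here; the only point that requires minor care is the change-of-variables argument on $\HH^N$, which is justified because the Haar measure coincides with Lebesgue measure and is both left- and right-invariant. The proof therefore reduces to two lines once Proposition~\ref{lem - Gaussian implies A1 A2} and properties \eqref{eq:G2}, \eqref{eq:G5} are in hand.
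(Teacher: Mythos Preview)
Your proposal is correct and essentially identical to the paper's proof: both pass the absolute value inside, pull out $\|f\|_{L^\infty}$, and invoke the nonnegativity and unit mass \eqref{eq:G5} of $G_\alpha$. The only difference is cosmetic---the paper applies \eqref{eq:G5} to $\int_{\HH^N}G_\alpha(\zeta^{-1}\circ\eta,t)\,\di\zeta$ without spelling out the symmetry/change-of-variables step that you make explicit.
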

\begin{proof}
	For $t>0$, $f\in L^\vc(\HH^N)$, and  $\eta\in \HH^N$ we have
	\[
	\begin{aligned}
		|e^{t\Lambda_\alpha}f(\eta)|&\le \int_{\HH^N}G_\alpha(\zeta^{-1}\circ \eta,t)|f(\zeta)| \, \di \zeta\\
		&\le \|f\|_{L^\infty(\HH^N)} \int_{\HH^N}G_\alpha(\zeta^{-1}\circ \eta,t) \, \di\zeta =  \|f\|_{L^\infty(\HH^N)},
	\end{aligned}
	\]
	where in the last inequality we used \eqref{eq:G5}.
	This completes our proof.
\end{proof}

\begin{lemma}
	\label{lem- domain}
	Let  $N\ge1$ and $\alpha\in (0,2]$. Then  $\Lambda_\alpha f\in L^\vc(\HH^N)$ for every $f\in C^\vc_c(\HH^N)$.
	
\end{lemma}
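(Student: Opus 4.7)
The plan is to split the argument into the easy case $\alpha=2$ and the substantive case $\alpha\in(0,2)$, and in the latter to split the defining integral in \eqref{eq:defofFSL} at $t=1$ and estimate each piece by appealing to the two different features of the heat semigroup: its $L^\infty$-contractivity for short times and the time-derivative decay for long times.

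For $\alpha=2$ one has $\Lambda_2 f = \Delta_{\mathbb H} f$, and since $f\in C_c^\infty(\mathbb H^N)$, its sub-Laplacian $\Delta_{\mathbb H}f$ lies in $C_c^\infty(\mathbb H^N)\subset L^\infty(\mathbb H^N)$, so the conclusion is immediate. For $\alpha\in(0,2)$ I would proceed from
\[
(-\Delta_{\mathbb H})^{\alpha/2} f(\eta) = \frac{1}{\Gamma(1-\alpha/2)} \int_0^\infty t^{-\alpha/2}\, (-\Delta_{\mathbb H}) e^{t\Delta_{\mathbb H}} f(\eta) \, \di t,
\]
and split the integral as $\int_0^1 + \int_1^\infty$. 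On $(0,1)$ I would use that $-\Delta_{\mathbb H}$ commutes with the heat semigroup on functions in its domain (which includes $C_c^\infty$), so
\[
(-\Delta_{\mathbb H}) e^{t\Delta_{\mathbb H}} f = e^{t\Delta_{\mathbb H}}(-\Delta_{\mathbb H} f),
\]
and then Lemma \ref{lem-maximal function domination} applied to $g:=-\Delta_{\mathbb H} f\in C_c^\infty\subset L^\infty$ yields the uniform pointwise bound $\|e^{t\Delta_{\mathbb H}}g\|_{L^\infty}\le \|\Delta_{\mathbb H}f\|_{L^\infty}$. Since $\alpha/2<1$, the factor $t^{-\alpha/2}$ is integrable near $0$, and the $(0,1)$ part is bounded by $C\,\|\Delta_{\mathbb H}f\|_{L^\infty}$ uniformly in $\eta$.

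On $[1,\infty)$ I would instead use Lemma \ref{lem-time derivaitive of heat kernel} applied at exponent $2$: noting $\Lambda_2=\Delta_{\mathbb H}$ and that $\partial_t e^{t\Delta_{\mathbb H}}f = \Delta_{\mathbb H} e^{t\Delta_{\mathbb H}}f$, this lemma gives
\[
\bigl|(-\Delta_{\mathbb H}) e^{t\Delta_{\mathbb H}} f(\eta)\bigr| \lesssim \frac{1}{t}\,\mathcal M f(\eta)
\]
for all $t>0$ and $\eta\in\mathbb H^N$. Since $f\in L^\infty(\mathbb H^N)$, the Hardy–Littlewood maximal function $\mathcal M f$ is also in $L^\infty(\mathbb H^N)$ with $\|\mathcal M f\|_{L^\infty}\le\|f\|_{L^\infty}$. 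Hence on $[1,\infty)$ the integrand is bounded by $C\,t^{-\alpha/2-1}\|f\|_{L^\infty}$, which is integrable, and yields a uniform-in-$\eta$ bound of the second piece.

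Combining the two estimates gives
\[
\|(-\Delta_{\mathbb H})^{\alpha/2} f\|_{L^\infty(\mathbb H^N)} \le C\bigl(\|\Delta_{\mathbb H} f\|_{L^\infty(\mathbb H^N)} + \|f\|_{L^\infty(\mathbb H^N)}\bigr) < \infty,
\]
and hence $\Lambda_\alpha f = -(-\Delta_{\mathbb H})^{\alpha/2}f\in L^\infty(\mathbb H^N)$. The main technical point, which I expect to be the only delicate step, is the interchange $(-\Delta_{\mathbb H})e^{t\Delta_{\mathbb H}}f = e^{t\Delta_{\mathbb H}}(-\Delta_{\mathbb H}f)$ together with the identification of the pointwise integral in \eqref{eq:defofFSL} with the spectrally defined operator on $C_c^\infty$; both are standard consequences of the self-adjointness of $-\Delta_{\mathbb H}$ on $L^2(\mathbb H^N)$ and the fact that $C_c^\infty\subset \mathrm{Dom}(-\Delta_{\mathbb H})$, but they need to be invoked explicitly to make the two ingredients above applicable.
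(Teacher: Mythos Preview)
Your proposal is correct and follows essentially the same approach as the paper: the same split at $t=1$, the commutation $(-\Delta_{\mathbb H})e^{t\Delta_{\mathbb H}}f=e^{t\Delta_{\mathbb H}}(-\Delta_{\mathbb H}f)$ combined with Lemma~\ref{lem-maximal function domination} on $(0,1)$, and Lemma~\ref{lem-time derivaitive of heat kernel} together with the $L^\infty$-boundedness of $\mathcal M$ on $[1,\infty)$, arriving at the identical final bound $\|\Lambda_\alpha f\|_{L^\infty}\lesssim \|\Delta_{\mathbb H}f\|_{L^\infty}+\|f\|_{L^\infty}$. The paper justifies the commutation step by citing \eqref{eq:IBP}, which amounts to the same self-adjointness observation you flag at the end.
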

\begin{proof}
	The case $\alpha =2$ is straightforward. We only provide the proof for $\alpha\in (0,2)$.
	Let 
	 $f\in C^\vc_c(\HH^N)$. Then we have
	\[
	\Lambda_\alpha f  =   \frac{1}{\Gamma(1- \alpha/2)} \int_{0}^{\vc} t^{-\frac{\alpha}{2}} \Lambda_2 e^{t\Lambda_2} f \, \di t.
	\]
	This together with \eqref{eq:IBP}, Lemmas~\ref{lem-time derivaitive of heat kernel} and \ref{lem-maximal function domination}, and the boundedness of the maximal function $\mathcal M$ implies
	\[
	\begin{aligned}
	\|\Lambda_\alpha f\|_{L^\infty(\HH^N)}
	&\le \int_0^1 t^{-\frac{\alpha}{2}} \|e^{t\Lambda_2} \Lambda_2 f\|_{L^\infty(\HH^N)} \, \di t+ \int_1^\infty t^{-\frac{\alpha}{2}} \|t\Lambda_2 e^{t\Lambda_2 }f\|_{L^\infty(\HH^N)} \, \f{\di t}{t}\\
	&\lesi  \|\Lambda_2 f\|_{L^\infty(\HH^N)} +\|\mathcal Mf\|_{L^\infty(\HH^N)}\\
	&\lesi \|\Lambda_2 f\|_{L^\infty(\HH^N)} +\|f\|_{L^\infty(\HH^N)} <\infty.
\end{aligned}
	\]
	This completes our proof.
\end{proof}


\begin{lemma} 
	\label{lem - limit etL}
	 Let $N\ge1$ and $\alpha\in (0,2]$. Then we have
	\begin{enumerate}[\rm (a)]
		\item We have
		\[
		\lim_{t\to 0^+}\|e^{t\Lambda_\alpha}f-f\|_{L^\infty(\HH^N)} = 0 
		\]
		for $f\in C_c(\HH^N)$.
		
		\item Let $t>0$. Then
		\[
		\lim_{\tau\to 0^+}\|e^{(t-\tau)\Lambda_\alpha}f-e^{t\Lambda_\alpha}f\|_{L^\infty(\HH^N)} = 0
		\]
		for $f\in C_c(\HH^N)$.
	\end{enumerate}
\end{lemma}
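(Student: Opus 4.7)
For part (a), my plan is to use the mass-conservation property \eqref{eq:G5} to write
$$e^{t\Lambda_\alpha}f(\eta) - f(\eta) = \int_{\HH^N} G_\alpha(\zeta^{-1}\circ\eta, t)\bigl[f(\zeta)-f(\eta)\bigr]\, \di \zeta.$$
Fix $\varepsilon>0$. Since $f \in C_c(\HH^N)$ is uniformly continuous on $\HH^N$, I can choose $\delta>0$ such that $|f(\zeta)-f(\eta)|<\varepsilon$ whenever $\dist_{\HH}(\eta,\zeta)<\delta$. Splitting the integral at $\dist_{\HH}(\eta,\zeta)=\delta$ and using \eqref{eq:G5} again, the near part is bounded by $\varepsilon$, while the far part is bounded by $2\|f\|_{L^\infty(\HH^N)}\,I(\delta,t)$, where
$$I(\delta,t) := \int_{\dist_{\HH}(\eta,\zeta)\ge \delta} G_\alpha(\zeta^{-1}\circ\eta,t)\,\di \zeta.$$
Invoking the upper bound in Proposition~\ref{lem - Gaussian implies A1 A2} together with the polar-type formula \eqref{eq:6.1} (applied to $B(\eta,M)$ and passing to the limit $M\to\infty$), then substituting $u=r/(c_2 t^{1/\alpha})$, I obtain
$$I(\delta,t) \lesi \int_{\delta/(c_2 t^{1/\alpha})}^{\infty} g_\alpha(u)\, u^{Q-1}\,\di u.$$
Since $g_\alpha(u)u^{Q-1}$ is integrable on $(0,\infty)$ in both cases $\alpha\in(0,2)$ and $\alpha=2$, and the lower limit of integration tends to $+\infty$ as $t\to 0^+$, this tail vanishes uniformly in $\eta$. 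Letting $\varepsilon\to 0$ then yields (a).

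For part (b), I would exploit the semigroup property encoded in \eqref{eq:G4}: direct computation (using the Haar-invariance under left translation) shows that $e^{t\Lambda_\alpha}f = e^{(t-\tau)\Lambda_\alpha}\bigl[e^{\tau\Lambda_\alpha}f\bigr]$ for $0<\tau<t$, so that
$$e^{(t-\tau)\Lambda_\alpha}f - e^{t\Lambda_\alpha}f = e^{(t-\tau)\Lambda_\alpha}\bigl[f - e^{\tau\Lambda_\alpha}f\bigr].$$
By Lemma~\ref{lem-maximal function domination}, $e^{(t-\tau)\Lambda_\alpha}$ is a contraction on $L^\infty(\HH^N)$, whence
$$\bigl\|e^{(t-\tau)\Lambda_\alpha}f - e^{t\Lambda_\alpha}f\bigr\|_{L^\infty(\HH^N)}\le \bigl\|f - e^{\tau\Lambda_\alpha}f\bigr\|_{L^\infty(\HH^N)},$$
and the right-hand side tends to $0$ as $\tau\to 0^+$ by part (a). Note that $e^{\tau\Lambda_\alpha}f$ is not generally compactly supported, but this is irrelevant: part (a) is applied only to the original $f\in C_c(\HH^N)$.

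The only real technical issue is uniformity in $\eta\in\HH^N$ of the tail estimate in (a). This is automatic, since after the substitution the bound on $I(\delta,t)$ depends on $\delta$ and $t$ alone, and uniform continuity of $f$ holds globally on $\HH^N$ (as $f$ is continuous and compactly supported). Thus no subtle interaction between the support of $f$ and the dispersive behavior of $G_\alpha$ arises; the main bookkeeping is simply to cover both regimes $\alpha\in(0,2)$ and $\alpha=2$ of Proposition~\ref{lem - Gaussian implies A1 A2} in the verification that $\int_0^\infty g_\alpha(u)u^{Q-1}\,\di u<\infty$.
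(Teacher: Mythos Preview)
Your proof is correct but follows a genuinely different route from the paper's. The paper argues via the generator: for $f\in C_c^\infty(\HH^N)$ it writes
\[
e^{(t-\tau)\Lambda_\alpha}f - e^{t\Lambda_\alpha}f = -\int_{t-\tau}^{t} e^{s\Lambda_\alpha}\Lambda_\alpha f\,\di s,
\]
bounds the integrand by $\|\Lambda_\alpha f\|_{L^\infty(\HH^N)}$ (invoking Lemma~\ref{lem- domain}, which in turn rests on Lemma~\ref{lem-time derivaitive of heat kernel}), and then passes to general $f\in C_c(\HH^N)$ by a density argument. Your approach instead treats (a) directly as an approximate-identity statement using only the pointwise kernel bound in Proposition~\ref{lem - Gaussian implies A1 A2} and the polar formula \eqref{eq:6.1}, and then reduces (b) to (a) via the semigroup identity (a consequence of \eqref{eq:G4}) combined with the $L^\infty$ contraction in Lemma~\ref{lem-maximal function domination}. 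The advantage of your route is that it bypasses both Lemma~\ref{lem- domain} and the $C_c^\infty$ approximation step entirely; the paper's route, on the other hand, gives an explicit $O(\tau)$ rate for smooth $f$, which your argument does not furnish. Both are perfectly valid here.
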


\begin{proof}
	Since the proof of (a) is similar to (b) (even easier), we only give the proof of (b).
	 
We prove (b) for $f\in C^\vc_c(\HH^N)$. Indeed, for $0<\tau<t$ we have 
\[
e^{(t-\tau)\Lambda_\alpha}f-e^{t\Lambda_\alpha}f =-\int_{t-\tau}^t\Lambda_\alpha e^{s\Lambda_\alpha}f \, \di s = \int_{t}^{t-\tau}e^{s\Lambda_\alpha}(-\Lambda_\alpha) f \, \di s,
\]
which implies
\[
\|e^{(t-\tau)\Lambda_\alpha}f-e^{t\Lambda_\alpha}f \|_{L^\infty(\HH^N)} \le \int_{t-\tau}^t\|e^{s\Lambda_\alpha}\Lambda_\alpha f\|_{L^\infty(\HH^N)} \, \di s.
\]
By Lemmas~\ref{lem-maximal function domination} and \ref{lem- domain}, we further imply
\[
\|e^{(t-\tau)\Lambda_\alpha}f-e^{t\Lambda_\alpha}f \|_{L^\infty(\HH^N)} \le \tau\|\Lambda_\alpha f\|_{L^\infty(\HH^N)} < \infty.
\]
Therefore,
\[
\lim_{\tau\to 0^+}\|e^{(t-\tau)\Lambda_\alpha}f-e^{t\Lambda_\alpha}f\|_{L^\infty(\HH^N)}=0 
\]
for $f\in C^\vc_c(\HH^N)$.
Assume that  $f\in C_c(\HH^N)$. Then for any $\epsilon>0$ we can find $g\in C^\vc_c(\HH^N)$ such that 
\[
\|f-g\|_{L^\infty(\HH^N)}< \epsilon.
\]
This together with Lemma~\ref{lem-maximal function domination} implies
\begin{equation*}
\begin{split}
&\|e^{(t-\tau)\Lambda_\alpha}f-e^{t\Lambda_\alpha}f\|_{L^\infty(\HH^N)}\\
&\le \|e^{(t-\tau)\Lambda_\alpha}(f - g)\|_{L^\infty(\HH^N)}+\|e^{(t-\tau)\Lambda_\alpha}g- e^{t\Lambda_\alpha}g\|_{L^\infty(\HH^N)}+\|e^{t\Lambda_\alpha}g- e^{t\Lambda_\alpha}f\|_{L^\infty(\HH^N)}    \\
	&\le  \|f-g\|_{L^\infty(\HH^N)}  +\|e^{(t-\tau)\Lambda_\alpha}g- e^{t\Lambda_\alpha}g\|_{L^\infty(\HH^N)}   +\|g-f\|_{L^\infty(\HH^N)}\\
	&\le 2\|f-g\|_{L^\infty(\HH^N)} +\|e^{(t-\tau)\Lambda_\alpha}g- e^{t\Lambda_\alpha}g\|_{L^\infty(\HH^N)},
\end{split}
\end{equation*}
which implies
\[
\begin{aligned}
	\lim_{\tau\to 0^+}\|e^{(t-\tau)\Lambda_\alpha}f-e^{t\Lambda_\alpha}f\|_{L^\infty(\HH^N)} &\le 2\epsilon +\lim_{\tau\to 0^+}\|e^{(t-\tau)\Lambda_\alpha}g- e^{t\Lambda_\alpha}g\|_{L^\infty(\HH^N)}\le 2\epsilon
\end{aligned}
\]
for all $\epsilon>0$.
Therefore,
\[
\lim_{t\to 0^+}\|e^{(t-\tau)\Lambda_\alpha}f- e^{t\Lambda_\alpha}f\|_{L^\infty(\HH^N)}=0
\]
for all $f\in C_c(\HH^N)$.
This completes our proof.
\end{proof}

For any Radon measure $\mu$ on $\HH^N$, we define
\[
e^{t\Lambda_\alpha}\mu(\eta) = \int_{\HH^N} G_{\alpha}(\zeta^{-1}\circ \eta, t)d\mu(\zeta)
\]
for $\eta\in \HH^N$ and $t>0$.

We have the following estimate.
\begin{lemma}\label{lem 1}
Let $N\ge 1$ and $\alpha\in (0,2]$. For any Radon measure $\mu$ on $\HH^N$, there exists a constant $C>0$ such that
	\[
	\|e^{t\Lambda_\alpha}\mu\|_{L^\infty(\HH^N)} \le C  t^{-\frac{Q}{\alpha}}\sup_{\eta\in \HH^N}\mu(B(\eta,t^\frac{1}{\alpha}))
	\]
	for $t>0$.
\end{lemma}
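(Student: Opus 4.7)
The plan is to use the upper Gaussian-type bound for $G_\alpha$ provided by Proposition~\ref{lem - Gaussian implies A1 A2} together with a dyadic annular decomposition of $\HH^N$ around the point $\eta$, bounding the measure of each annulus via the covering Lemma~\ref{lem-covering lemma}(b).

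Set $r := t^{1/\alpha}$ and fix $\eta\in\HH^N$. Decompose
\[
\HH^N = B(\eta,r) \,\cup\, \bigcup_{k=0}^{\infty} A_k(\eta), \qquad A_k(\eta):= B(\eta,2^{k+1}r)\setminus B(\eta,2^{k}r).
\]
By Proposition~\ref{lem - Gaussian implies A1 A2}, for $\zeta \in A_k(\eta)$ we have $|\zeta^{-1}\circ\eta|_{\HH^N}/(c_2 t^{1/\alpha}) \ge 2^k/c_2$, and since $g_\alpha$ is nonincreasing,
\[
G_\alpha(\zeta^{-1}\circ\eta,t) \;\le\; \frac{C_2}{r^{Q}}\, g_\alpha\!\bigl(2^{k}/c_2\bigr),
\]
while on $B(\eta,r)$ we use only $g_\alpha \le 1$ to get $G_\alpha(\zeta^{-1}\circ\eta,t)\le C_2 r^{-Q}$.

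Next, by Lemma~\ref{lem-covering lemma}(b), the annulus $A_k(\eta)$ can be covered by at most $C\,2^{kQ}$ balls of radius $r$, so
\[
\mu(A_k(\eta)) \;\le\; C\,2^{kQ}\, \sup_{\zeta\in\HH^N}\mu(B(\zeta,r)),
\]
and trivially $\mu(B(\eta,r)) \le \sup_{\zeta}\mu(B(\zeta,r))$. Writing $M := \sup_{\zeta\in\HH^N}\mu(B(\zeta,r))$ and combining the two bounds,
\[
e^{t\Lambda_\alpha}\mu(\eta) \;\le\; \frac{C_2}{r^{Q}}\,M \;+\; \frac{C_2}{r^{Q}}\,M\, \sum_{k=0}^{\infty} 2^{kQ}\, g_\alpha\!\bigl(2^{k}/c_2\bigr).
\]

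The convergence of the series is the only point to check. When $\alpha\in(0,2)$ one has $g_\alpha(2^k/c_2) \lesssim 2^{-k(Q+\alpha)}$, so the series is bounded by $\sum_k 2^{-k\alpha} <\infty$. When $\alpha=2$ one has $g_2(2^k/c_2)=\exp(-4^k/c_2^{\,2})$, which decays super-geometrically and dominates the factor $2^{kQ}$. In both cases the sum is a constant $C=C(N,\alpha)$, whence
\[
e^{t\Lambda_\alpha}\mu(\eta) \;\le\; C\, t^{-Q/\alpha}\,\sup_{\zeta\in\HH^N}\mu\bigl(B(\zeta,t^{1/\alpha})\bigr),
\]
with $C$ independent of $\eta$ and $t$. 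Taking the supremum in $\eta$ yields the claim. The only conceptual step is the annular decomposition together with the covering-lemma counting; everything else is a direct application of the pointwise heat kernel bound.
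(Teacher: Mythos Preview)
Your proof is correct and follows essentially the same strategy as the paper: a dyadic annular decomposition around $\eta$, the pointwise upper bound \eqref{A1}, and the covering-lemma count to bound $\mu$ on each shell. The only cosmetic differences are that the paper invokes part~(a) of Lemma~\ref{lem-covering lemma} on the full ball $B(\eta,2^{j}t^{1/\alpha})$ rather than part~(b) on the annulus (note part~(b) is stated for $k\ge 2$, so for $k=0,1$ you would fall back on~(a) anyway), and that it handles $\alpha=2$ and $\alpha\in(0,2)$ uniformly via the single bound $g_\alpha(2^{j}/c_2)\lesssim 2^{-j(Q+\alpha)}$.
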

\begin{proof}
	For $\eta\in \HH^N$ and $t>0$, by applying \eqref{A1} we have
	\[
	\begin{aligned}
		|e^{t\Lambda_\alpha}\mu(\eta)|&\le \int_{\HH^N} \f{1}{t^{Q/\alpha}} g_\alpha\left(\f{|\zeta^{-1}\circ \eta|_{\HH^N}}{t^{1/\alpha}}\right) \, \di\mu(\zeta)\\
		&\le \int_{B(\eta,2 t^\frac{1}{\alpha})} \f{1}{t^{Q/\alpha}} \, \di\mu(\zeta)\\
		&+\sum_{j\ge 1} \int_{\{2^{j}t^\frac{1}{\alpha}<\dist_\mathbb{H}(\eta,\zeta)\le 2^{j+1}t^\f{1}{\alpha}\}} \f{1}{t^{Q/\alpha}} g_\alpha\left(\f{\dist_\mathbb{H}(\eta,\zeta)}{t^{1/\alpha}}\right) \,\di\mu(\zeta)\\
		&\lesi \int_{B(\eta,2 t^\frac{1}{\alpha})}\f{1}{t^{Q/\alpha}} \, \di\mu(\zeta)+\sum_{j\ge 1} \int_{B(\eta,2^{j+1}t^\frac{1}{\alpha})} \f{2^{-j(Q+\alpha)}}{t^{Q/\alpha}} \,   \di\mu(\zeta)\\
		&\lesi \sum_{j\ge 1} t^{-\frac{Q}{\alpha}}\int_{B(\eta,2^{j}t^\f{1}{\alpha})}  2^{-j(Q+\alpha)}  \, \di\mu(\zeta).
	\end{aligned}
	\]
	Applying Lemma~\ref{lem-covering lemma}, for each $j\ge 1$ we can cover the ball $B(\eta,2^{j}t^{1/\alpha})$ by at most $C2^{jQ}$ balls whose radii all equal to $t^{1/\alpha}$. Consequently, for each $j\ge 1$,
	\[
	\int_{B(\eta,2^{j}t^\frac{1}{\alpha})} \,   \di\mu(\zeta)\lesi  2^{jQ}\sup_{\xi \in \HH^N} \mu(B(\xi,t^\frac{1}{\alpha})).
	\]  
	Therefore, for $\eta\in \HH^N$ and $t>0$, 
	\[
	\begin{aligned}
		|e^{t\Lambda_\alpha}\mu(\eta)| &\lesi \sum_{j\ge 1} 2^{-j\alpha}t^{-\frac{Q}{\alpha}} \sup_{\xi \in \HH^N} \mu(B(\xi,t^\frac{1}{\alpha}))\lesi  t^{-\frac{Q}{\alpha}} \sup_{\xi \in \HH^N} \mu(B(\xi,t^\frac{1}{\alpha})).
	\end{aligned}
	\]
	This completes our proof.
\end{proof}

\subsection{Preliminary lemmas.} 
At the end of Section 2, we provide some lemmas to prove the solvability.

\begin{lemma}\label{lem - supersolutions imply solution}
	Let $\mu$ be a nonnegative Radon measure on $\HH^N$ and $T\in(0, \infty]$. Assume that there exists a supersolution $v$ of  problem \eqref{eq:Fujita} with \eqref{eq:Fujitaini} in $\HH^N\times[0,T)$. Then there exists a minimal solution of problem \eqref{eq:Fujita} with \eqref{eq:Fujitaini} in $\HH^N\times[0,T)$.
\end{lemma}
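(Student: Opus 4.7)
The plan is to construct the minimal solution via the classical monotone iteration scheme, using the supersolution $v$ as a pointwise barrier. Set $u_0(\eta,t) := [e^{t\Lambda_\alpha}\mu](\eta)$ and define inductively
\[
u_{n+1}(\eta,t) := [e^{t\Lambda_\alpha}\mu](\eta) + \int_0^t [e^{(t-s)\Lambda_\alpha} u_n(s)^p](\eta) \, \di s
\]
for $n \ge 0$. Because $G_\alpha \ge 0$ by Proposition~\ref{lem - Gaussian implies A1 A2}, the operator $e^{t\Lambda_\alpha}$ preserves nonnegativity and is order-preserving on nonnegative measurable functions, so each $u_n$ is a well-defined nonnegative measurable function on $\HH^N \times [0,T)$.

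First I would show by induction on $n$ that $0 \le u_n \le u_{n+1} \le v$ almost everywhere on $\HH^N \times [0,T)$. The base case $u_0 \le v$ follows immediately from the supersolution inequality satisfied by $v$, since the nonlinear contribution is nonnegative. For the inductive step, assume $u_{n-1} \le u_n \le v$. Since $s \mapsto s^p$ is monotone increasing on $[0,\infty)$ and $e^{(t-s)\Lambda_\alpha}$ is order-preserving, one obtains $u_n \le u_{n+1}$ by comparing the iteration at levels $n-1$ and $n$, and $u_{n+1} \le v$ by comparing with the supersolution inequality for $v$. Having established this monotone dominated structure, I define $u(\eta,t) := \lim_{n\to\infty} u_n(\eta,t)$, which exists in $[0,\infty)$ and satisfies $u \le v < \infty$ for a.a.\ $(\eta,t)$. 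Applying the monotone convergence theorem to the space-time integral $\int_0^t\int_{\HH^N} G_\alpha(\zeta^{-1}\circ\eta, t-s) u_n(\zeta,s)^p \, \di\zeta \, \di s$, one passes the limit inside the integral representation and concludes that $u$ satisfies \eqref{eq- defn sols of parabolic eqs}; hence $u$ is a solution of problem \eqref{eq:Fujita} with \eqref{eq:Fujitaini} in $\HH^N \times [0,T)$.

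For minimality, let $w$ be any solution of problem \eqref{eq:Fujita} with \eqref{eq:Fujitaini} in $\HH^N\times[0,T)$. Then $w$ is in particular a supersolution, so the same induction applied with $w$ in place of $v$ shows $u_n \le w$ for every $n$, and passing to the limit yields $u \le w$, proving that $u$ is minimal. The main subtlety is ensuring each iterate is finite and that limits commute with the integrals; both are handled uniformly by the pointwise domination $u_n \le v < \infty$, which legitimizes the monotone convergence argument at every step. No new estimates beyond the positivity and order-preservation of $e^{t\Lambda_\alpha}$ are required.
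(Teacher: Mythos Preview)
Your proof is correct and follows essentially the same monotone iteration scheme as the paper: both construct the sequence $u_{n+1} = e^{t\Lambda_\alpha}\mu + \int_0^t e^{(t-s)\Lambda_\alpha} u_n(s)^p\,\di s$, show inductively that it is nondecreasing and dominated by $v$, pass to the limit via monotone convergence, and then repeat the induction against an arbitrary solution to obtain minimality. The only differences are cosmetic (you index from $0$ and are slightly more explicit about the order-preservation of $e^{t\Lambda_\alpha}$ and the use of the monotone convergence theorem).
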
 

\begin{proof} The proof is quite standard. See e.g. \cites{HI18, HIT23, IKS16, RS13}. However, we would like to provide it for the sake of completeness.
Define $\{u_n\}_{n\ge 1}$ as follows. Set $u_1(\eta, t) := [e^{t\Lambda_\alpha}\mu](\eta)$ and define
\begin{equation}\label{eq- sequence un}
	u_n(\eta, t) := [e^{t\Lambda_\alpha}\mu](\eta) + \int_0^t [e^{(t-s)\Lambda_\alpha}u_{n-1}(s)^p](\eta) \, \di s 
\end{equation}
for $n\ge 2$.
Let $v$ be a supersolution of problem \eqref{eq:Fujita} with \eqref{eq:Fujitaini} in $\HH^N\times[0,T)$, where $T\in(0, \infty]$. Then it follows inductively that
\[
0 \leq u_1(\eta,t) \leq u_2(\eta,t) \leq \cdots \leq u_n(\eta,t) \leq \cdots \leq v(\eta,t) < \infty
\]
for a.a.~$\eta \in \HH^N$ and $t \in (0, T)$. It follows that
\[
u(\eta,t) := \lim_{n \to \infty} u_n(\eta,t) \leq v(\eta,t)
\]
for a.a.~$\eta \in \HH^N$ and $t \in (0, T)$. This, along with \eqref{eq- sequence un}, implies that $u$ is a solution of problem \eqref{eq:Fujita} with \eqref{eq:Fujitaini}. Since any solution is also a supersolution, if $\tilde u$ is another solution of problem \eqref{eq:Fujita} with \eqref{eq:Fujitaini} then a similar argument also shows that $u\le \tilde u$. Hence, $u$ is a minimal solution. 
This completes our proof.
\end{proof}

The key to the proof of Theorem~\ref{main thm} in the case of $\alpha \in (0,2)$ is the following lemma on the existence of solutions of ordinary differential equations.
This idea comes from \cite{LS21}.

\begin{lemma}
\label{Lemma:XODE}
Let $f$ be a nonnegative measurable function on $(0,T)$ for some $T>0$.
Assume that 
\begin{equation}
\label{eq:XODE}
\infty> f(t) \ge a_1 + a_2 \int_{t_*}^t s^{-a} f(s)^b \, \di s \quad \mbox{for a.a.} \quad t\in (t_*,T),
\end{equation}
where $a_1,a_2>0$, $a\ge0$, $b>1$, and $t_*\in (0,T/2)$.
Then there exists $C=C(a,b)>0$ such that
\begin{equation*}
a_1\le Ca_2^{-\frac{1}{b-1}} t_*^\frac{a-1}{b-1}.
\end{equation*}
In addition, if $a=1$, then 
\begin{equation*}
a_1\le (a_2(b-1))^{-\frac{1}{b-1}} \left[\log\frac{T}{2t_*}\right]^{-\frac{1}{b-1}}.
\end{equation*}
\end{lemma}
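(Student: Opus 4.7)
My plan is to exploit the standard Bihari–Gronwall trick: define the right-hand side as a function and turn the integral inequality into an ordinary differential inequality that can be separated.

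First, set $F(t) := a_1 + a_2 \int_{t_*}^t s^{-a} f(s)^b \, \di s$. The hypothesis gives $F(t) \le f(t) < \infty$ for a.a.\ $t \in (t_*,T)$, so in particular $s^{-a}f(s)^b$ is locally integrable on $(t_*, T)$, which makes $F$ absolutely continuous on compact subintervals of $(t_*, T)$ with $F(t_*) = a_1$, $F \ge a_1 > 0$, and
\[
F'(t) = a_2 t^{-a} f(t)^b \ge a_2 t^{-a} F(t)^b \quad \text{for a.a. } t \in (t_*, T).
\]
Setting $G(t) := F(t)^{-(b-1)}$, the chain rule gives $G'(t) = -(b-1) F(t)^{-b} F'(t) \le -(b-1)a_2 t^{-a}$.

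Next I would integrate this differential inequality from $t_*$ to an arbitrary $t \in (t_*, T)$, obtaining
\[
0 < G(t) \le a_1^{-(b-1)} - (b-1) a_2 \int_{t_*}^t s^{-a} \, \di s,
\]
and rearrange (using positivity of the left-hand side) to the key inequality
\[
a_1^{b-1} \le \frac{1}{(b-1) a_2} \left( \int_{t_*}^t s^{-a} \, \di s \right)^{-1},
\]
valid for every $t \in (t_*, T)$ at which the original inequality holds. Taking the supremum over such $t$ (equivalently, letting $t \uparrow T$ along an admissible sequence) reduces the problem to lower-bounding $\int_{t_*}^T s^{-a}\,\di s$ by a constant multiple of $t_*^{1-a}$ (respectively $\log(T/(2t_*))$ when $a=1$).

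Finally I would split into cases using the hypothesis $t_* < T/2$. For $a > 1$, one has $\int_{t_*}^T s^{-a}\,\di s = (t_*^{1-a} - T^{1-a})/(a-1) \ge (1 - 2^{1-a})/(a-1) \cdot t_*^{1-a}$. For $0 \le a < 1$, one similarly has $\int_{t_*}^T s^{-a}\,\di s \ge (2^{1-a} - 1)/(1-a) \cdot t_*^{1-a}$ (this case is in fact vacuous for the stated conclusion since $t_*^{(a-1)/(b-1)} \to \infty$ as $t_* \to 0$, but the same integral bound still works). For $a = 1$, $\int_{t_*}^T s^{-1}\,\di s = \log(T/t_*) \ge \log(T/(2t_*))$. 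Substituting these bounds into the key inequality and raising to the power $1/(b-1)$ gives both stated estimates, with $C = C(a,b)$ coming from the explicit constants above.

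The argument is essentially mechanical; the only point requiring care is justifying that $F$ is absolutely continuous so that the separation-of-variables step is legitimate, which follows once one observes that the finiteness of $f$ forces local integrability of the integrand defining $F$. The case $a = 1$ requires using $t_* < T/2$ rather than just $t_* < T$ to avoid the integral degenerating at the upper endpoint, which is precisely why that hypothesis appears.
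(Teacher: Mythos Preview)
Your argument is correct and follows essentially the same approach the paper indicates: the paper does not give a proof but remarks that one analyzes the ODE $y'=a_2 t^{-a}y^b$ with $y(t_*)=a_1$, and your Bihari--Gronwall computation with $F$ and $G=F^{-(b-1)}$ is exactly that analysis written out (indeed your integrated inequality is the explicit solution formula for this ODE). One minor remark: your parenthetical that the case $0\le a<1$ is ``vacuous'' is not quite accurate---the bound is still meaningful for fixed $t_*\in(0,T/2)$, it merely becomes weak as $t_*\to 0$---but this does not affect the proof.
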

We remark that assumption \eqref{eq:XODE} implies that there exists a unique solution of the ordinary differential equation
$f'(t) = a_2 t^{-a}f(t)^b$ on $ (t_*,T)$ with $f(t_*) = a_1$.
The proof of Lemma~\ref{Lemma:XODE} is done by analyzing the solution of this equation.
For details of the proof, see e.g. \cite[Lemma 2.5]{HIT23}.

\section{Initial trace.}
In this section we show the existence and uniqueness of the initial trace and prove \eqref{eq- mainthm 1}.
The proof  follows the arguments in \cite{HI18}.

\begin{lemma}\label{lem 2} Let $u$ be a solution of \eqref{eq:Fujita} in $\HH^N\times(0,T)$, where $T\in(0, \infty)$. Then
\begin{equation}\label{eq1-lem2}
\operatorname*{ess\sup}_{0 < t < T-\epsilon} \int_{B(0,R)} u(\zeta,t) \, \di\zeta  < \infty
\end{equation}
for all $R>0$ and $0<\epsilon<T$.
Furthermore, there exists a unique Radon measure $\nu$ on $\HH^N$ such that
\begin{equation}\label{eq2-lem2}
\operatorname*{ess\,lim}_{t \to 0^+} \int_{\HH^N} u(\zeta,t) \phi(\zeta) \, \di\zeta  = \int_{\HH^N} \phi(\zeta) \, \di\nu(\zeta)
\end{equation}
for all $\phi \in C_c(\HH^N)$.
\end{lemma}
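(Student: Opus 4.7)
The plan is to follow \cite{HI18}, adapted to the Heisenberg setting via the two-sided estimate in Proposition~\ref{lem - Gaussian implies A1 A2} and the semigroup lemmas of Section~2. The argument splits into two main steps: first the local $L^1$ bound \eqref{eq1-lem2}, then the construction of the trace $\nu$ via a monotonicity hidden in the integral equation.

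For \eqref{eq1-lem2}, the starting point is \eqref{eq - additional condition solution}. Dropping the nonnegative Duhamel term gives
\[
u(\eta, t_1) \ge [e^{(t_1-t)\Lambda_\alpha} u(\cdot, t)](\eta) \qquad\text{for } 0 < t < t_1 < T, \text{ a.a.\ } \eta\in\HH^N.
\]
This inequality simultaneously shows that $u(\cdot, t)\in L^1_{\mathrm{loc}}$ for a.a.~$t$ (since the right-hand side is finite a.e., and the Gaussian lower bound in Proposition~\ref{lem - Gaussian implies A1 A2} forces local integrability of $u(\cdot, t)$ on small balls) and yields the quantitative bound: fixing $t_1\in (T-\epsilon/2, T)$ on the full-measure set where the inequality holds and integrating over $\eta\in B(0, R)$, Fubini gives
\[
\int_{B(0,R)} u(\eta, t_1)\,\di\eta \ \ge\ \int u(\zeta, t)\left[\int_{B(0,R)} G_\alpha(\zeta^{-1}\circ\eta, t_1-t)\,\di\eta\right]\di\zeta.
\]
The lower bound in Proposition~\ref{lem - Gaussian implies A1 A2}, together with the fact that $t_1-t$ stays in a compact subinterval of $(0, T)$ as $t$ varies over $(0, T-\epsilon)$, shows that the bracketed kernel integral is $\ge c(R, t_1, \epsilon) > 0$ whenever $\zeta\in B(0, R)$, and \eqref{eq1-lem2} follows at once.

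For the trace I would exploit a monotonicity encoded in \eqref{eq - additional condition solution}: fixing $t_0\in (0, T)$ and comparing the integral equation at two intermediate times $t<t'<t_0$ sharing the terminal time $t_0$ gives
\[
[e^{(t_0-t')\Lambda_\alpha}u(t')](\eta) = [e^{(t_0-t)\Lambda_\alpha}u(t)](\eta) + \int_t^{t'}[e^{(t_0-s)\Lambda_\alpha} u(s)^p](\eta)\,\di s,
\]
so $t\mapsto [e^{(t_0-t)\Lambda_\alpha}u(t)](\eta)$ is nondecreasing on $(0, t_0)$ for a.a.~$\eta$. Since it is dominated by $u(\eta, t_0)<\infty$, the pointwise monotone limit $w_{t_0}(\eta):=\lim_{t\to 0^+}[e^{(t_0-t)\Lambda_\alpha}u(t)](\eta)$ exists a.e., and dominated convergence lets the semigroup pass through to yield $w_{t_0'}=e^{(t_0'-t_0)\Lambda_\alpha}w_{t_0}$ for $0<t_0<t_0'$. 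Combined with the uniform local $L^1$ bound of Step~1 and weak-$*$ compactness of Radon measures, this coherent orbit produces a unique nonnegative Radon measure $\nu$ on $\HH^N$ satisfying $w_{t_0}=e^{t_0\Lambda_\alpha}\nu$.

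Finally, to obtain \eqref{eq2-lem2}, for $\phi\in C_c(\HH^N)$ I would write, using the symmetry \eqref{eq:G2},
\[
\int u(\cdot, t)\phi\,\di\zeta = \int u(\cdot, t)\bigl[\phi - e^{(t_0-t)\Lambda_\alpha}\phi\bigr]\di\zeta + \int[e^{(t_0-t)\Lambda_\alpha}u(t)]\phi\,\di\eta.
\]
The second term converges, as $t\to 0^+$, to $\int w_{t_0}\phi\,\di\eta=\int[e^{t_0\Lambda_\alpha}\phi]\,\di\nu$, which in turn tends to $\int\phi\,\di\nu$ as $t_0\to 0^+$ by Lemma~\ref{lem - limit etL}. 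The main technical obstacle will be making the error term vanish uniformly as $t,t_0\to 0^+$: I expect to split the integral into a fixed ball $B(0,R)\supset\supp\phi$, on which the Bernstein-type bound $\|e^{s\Lambda_\alpha}\phi-\phi\|_{L^\infty(\HH^N)}\lesi s\|\Lambda_\alpha\phi\|_{L^\infty(\HH^N)}$ from Lemma~\ref{lem- domain} combined with Step~1 handles the integrand, and its complement, on which $[e^{s\Lambda_\alpha}\phi](\eta)$ exhibits fast spatial decay by Proposition~\ref{lem - Gaussian implies A1 A2} and dyadic annular decomposition controls the contribution against the uniform local $L^1$ bound. Uniqueness of $\nu$ follows immediately since $C_c(\HH^N)$ separates Radon measures.
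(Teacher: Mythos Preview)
Your argument for \eqref{eq1-lem2} is essentially the paper's, with the cosmetic difference that you integrate the pointwise inequality over $\eta\in B(0,R)$ whereas the paper evaluates at a single point $\eta_0\in B(0,R)$ where $u(\eta_0,t_1)<\infty$ and uses the inclusion $B(0,R)\subset B(\eta_0,2R)$. Both work.

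For the trace \eqref{eq2-lem2} you take a genuinely different route from the paper, and while your monotonicity observation about $t\mapsto [e^{(t_0-t)\Lambda_\alpha}u(t)]$ is correct and elegant, the subsequent steps carry real technical weight that your sketch underestimates. The paper's argument is much shorter: it extracts a subsequential weak-$*$ limit $\nu$ along some $t_j\to 0$ via \eqref{eq1-lem2} and compactness, then shows any other subsequential limit $\nu'$ (along $s_j\to 0$) must coincide with $\nu$ by pairing the inequality $u(\cdot,t_j)\ge e^{(t_j-s_j)\Lambda_\alpha}u(\cdot,s_j)$ against $\phi$, using the symmetry \eqref{eq:G2}, and invoking only Lemma~\ref{lem - limit etL} together with the local $L^1$ bound on a \emph{fixed} ball containing $\operatorname{supp}\phi$. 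No tail estimates are needed at all.

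Your plan, by contrast, needs two nontrivial ingredients you leave open. First, the step ``this coherent orbit produces a unique nonnegative Radon measure $\nu$ with $w_{t_0}=e^{t_0\Lambda_\alpha}\nu$'' is itself an initial-trace statement for the linear equation; justifying it via weak-$*$ compactness runs into the problem that $e^{s\Lambda_\alpha}\phi$ is not compactly supported, so passing to the limit in $\int w_{t_0^j}[e^{(t_0-t_0^j)\Lambda_\alpha}\phi]$ already requires tail control. Second, your error term on $\HH^N\setminus B(0,R)$ needs the dyadic sum $\sum_k C(2^kR)\cdot(\text{decay of }e^{s\Lambda_\alpha}\phi\text{ at }2^kR)$ to converge, where $C(R')$ is the bound from Step~1. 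For $\alpha=2$ that bound grows like $\exp(cR'^2/\epsilon)$, so convergence forces $s=t_0-t$ small relative to the Gaussian constants---doable, but a delicate balancing you would have to carry out explicitly. The paper avoids all of this by never leaving the single ball $B(0,R)\supset\operatorname{supp}\phi$.
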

\begin{proof} 

Let $R, \epsilon >0$.  Then, from \eqref{eq - additional condition solution} and  the nonnegativity of $G_\alpha$, there exists $\eta_0 \in B(0,R)$ such that
\[
\begin{aligned}
\infty > u(\eta_0,t) \ge  
\int_{B(\eta_0, 2R)}  G_\alpha( \zeta^{-1}\circ\eta_0,t-\tau) u(\zeta,\tau) \, \di\zeta 	
\end{aligned}
\]
	for a.a.~$\tau \in (0,T-\epsilon)$ and $t \in (T-\epsilon/2,T)$.
	It follows that 
	\[
	\infty >    
	\inf_{\xi\in B(\eta_0,2R), \frac{\epsilon}{2}<s<T}G_\alpha(\xi^{-1}\circ \eta_0,s) \int_{B(\eta_0, 2R)}  u(\zeta,\tau) \, \di\zeta .
	\]
	From \eqref{A1}, we have
	\[
	\inf_{\xi\in B(\eta_0,2R), \epsilon<s<T}G_\alpha(\xi^{-1}\circ \eta_0,s)\ge \f{C_1}{T^{Q/\alpha}}g_\alpha\left(\f{CR}{c_1\epsilon^{1/\alpha}}\right),
	\] 
	which implies
	\[
	\int_{B(\eta_0, 2R)}  u( \zeta,\tau) \, \di\zeta <C(T, R,\epsilon)
	\]
	for a.a.~$\tau \in (0, T-\epsilon)$. 
	It follows  \eqref{eq1-lem2} since $B(0,R)\subset B(\eta_0,  2R)$.

	\bigskip
	
	We now take care of \eqref{eq2-lem2}. It suffices to prove \eqref{eq2-lem2} for $0\le \phi \in C_c(\HH^N)$. From \eqref{eq1-lem2}, the Riesz representation theorem \cite[Theorem 7.2.8]{Cohn}  and the weak compactness of Radon measures \cite[4.4 THEOREM]{Simon}, we can find a sequence $\{t_j\}$ with $\lim_{j\to\infty} t_j = 0$ and a  nonnegative Radon measure $\nu$ on $\HH^N$ such that
	\begin{equation}\label{limit for nu}
	\lim_{j\to\infty} \int_{\HH^N} u(\zeta,t_j) \phi(\zeta) \, \di\zeta  = \int_{\HH^N} \phi(\zeta) \, \di\nu(\zeta) 
	\end{equation}
	for all $\phi \in C_c(\HH^N)$. 
	
	For the uniqueness of the Radon measure $\nu$, we assume that there exist a sequence $\{s_j\}$ with $\lim_{j\to\infty} s_j = 0$ and a nonnegative Radon measure $\nu'$ on $\HH^N$ such that
	\begin{equation}\label{limit for nu'}
	\lim_{j\to\infty} \int_{\HH^N} u(\zeta,s_j) \phi(\zeta) \, \di\zeta  = \int_{\HH^N} \phi(\zeta) \, \di\nu'(\zeta)
	\end{equation}
	for all $\phi \in C_c(\HH^N)$. Taking a subsequence if necessary,  we might assume that  $t_j > s_{j}$ for $j = 1, 2, \dots$. From \eqref{eq - additional condition solution},
	\[
	u(\eta,t_j) \geq \int_{\HH^N} G_\alpha(  \zeta^{-1}\circ \eta,t_j-s_j) u( \zeta,s_j) \, \di\zeta , \quad j = 1, 2, \dots
	\]
	for a.a.~$\eta\in\mathbb{H}^N$.
	From the above inequality,  the fact $(\zeta^{-1}\circ \eta)^{-1} = \eta^{-1}\circ \zeta$, and \eqref{eq:G2}, we have
	\[
	\begin{aligned}
	&\int_{\HH^N} u(\eta, t_j) \phi(\eta) \, \di\eta\\
	 &\geq \int_{\HH^N} \left( \int_{\HH^N} G_\alpha( \zeta^{-1}\circ \eta,t_j-s_j) \phi(\eta) \, \di\eta \right) u(\zeta,s_j) \, \di\zeta \\
	&=\int_{\HH^N} \left( \int_{\HH^N} G_\alpha(\eta^{-1}\circ \zeta,t_j-s_j) \phi(\eta) \, \di\eta \right) u(\zeta,s_j) \, \di\zeta \\
	&= \int_{\HH^N} [e^{(t_j - s_{j})\Lambda_\alpha}\phi](\zeta) u(\zeta,s_j) \, \di\zeta  \\
	&\geq \int_{B(0,R)} [e^{(t_j - s_{j})\Lambda_\alpha}\phi](\zeta) u(\zeta,s_j) \, \di\zeta  \\
	&\geq \int_{B(0,R)} \phi(\zeta) u(\zeta,s_j) \, \di\zeta  - \|e^{(t_j - s_{j})\Lambda_\alpha}\phi - \phi\|_{L^\infty(B(0,R))} \int_{B(0,R)} u(\zeta,s_j) \, \di\zeta .
\end{aligned}
\]
Letting $j \to \infty$, by Lemma~\ref{lem - limit etL},  \eqref{eq1-lem2}, \eqref{limit for nu}, and \eqref{limit for nu'}  we obtain
	\[
	\int_{\HH^N} \phi \, \di\nu \geq \int_{\HH^N} \phi \, \di\nu'
	\]
	for all $\phi \in C_c(\HH^N)$.	
	Similarly, it follows that
	\[
	\int_{\HH^N} \phi \, \di\nu' \geq \int_{\HH^N} \phi \, \di\nu
	\]
	for all $\phi \in C_c(\HH^N)$.	
	This completes our proof.
\end{proof}

\begin{lemma}
	\label{lem 3}
	Let $\mu$ be nonnegative Radon measure on $\HH^N$. Let $u$ be a solution of problem \eqref{eq:Fujita} with \eqref{eq:Fujitaini} in $\HH^N\times[0,T)$, where $T\in(0,\infty)$. Then
	\begin{equation}
		\label{eq 1 - limit identity}
	\operatorname*{ess\,lim}_{t \to 0^+} \int_{\HH^N} u(\eta,t) \phi(\eta) \, \di\eta =
	\int_{\HH^N} \phi(\eta) \, \di\mu(\eta) 
	\end{equation}
	for all $\phi \in C_c(\HH^N)$.
\end{lemma}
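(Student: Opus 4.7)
My plan is to apply Lemma~\ref{lem 2} to $u$ -- which, as a solution in the sense of \eqref{eq- defn sols of parabolic eqs}, also satisfies \eqref{eq - additional condition solution} -- to extract a unique nonnegative Radon measure $\nu$ realizing the initial trace, and then to prove $\nu=\mu$ by exploiting both integral equations \eqref{eq- defn sols of parabolic eqs} and \eqref{eq - additional condition solution} against a nonnegative $\phi\in C_c(\HH^N)$. The key auxiliary fact, to be established at the end via dominated convergence, is
\[
\int [e^{t\Lambda_\alpha}\phi]\,\di\mu \longrightarrow \int\phi\,\di\mu \quad \text{as} \quad t\to 0^+.
\]

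For the lower bound $\nu\ge\mu$, dropping the nonnegative Duhamel term in \eqref{eq- defn sols of parabolic eqs} and using Fubini with the symmetry $G_\alpha(\cdot,t)=G_\alpha((\cdot)^{-1},t)$ yields $\int u(\cdot,t)\phi\,\di\eta \ge \int [e^{t\Lambda_\alpha}\phi]\,\di\mu$, and sending $t\to 0^+$ together with the auxiliary fact gives $\int\phi\,\di\nu\ge\int\phi\,\di\mu$. For the upper bound, testing \eqref{eq - additional condition solution} against $\phi$ and rearranging yields, for $0<\tau<t<T$,
\[
\int u(\cdot,\tau)[e^{(t-\tau)\Lambda_\alpha}\phi]\,\di\zeta = \int u(\cdot,t)\phi\,\di\eta - \int_\tau^t\!\!\int [e^{(t-s)\Lambda_\alpha}\phi]\,u(s)^p\,\di\zeta\,\di s.
\]
As $\tau\to 0^+$, monotone convergence on the double integral combined with \eqref{eq- defn sols of parabolic eqs} identifies the right-hand side limit precisely as $\int [e^{t\Lambda_\alpha}\phi]\,\di\mu$. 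On the other hand, for any cutoff $\chi_R\in C_c(\HH^N)$ with $0\le \chi_R\le 1$ and $\chi_R\uparrow 1$, the function $\chi_R[e^{t\Lambda_\alpha}\phi]$ lies in $C_c(\HH^N)$, and the uniform convergence in Lemma~\ref{lem - limit etL}(b) together with the local $L^1$-bound \eqref{eq1-lem2} gives $\int u(\cdot,\tau)\chi_R[e^{(t-\tau)\Lambda_\alpha}\phi]\,\di\zeta \to \int\chi_R[e^{t\Lambda_\alpha}\phi]\,\di\nu$. Since $\chi_R\le 1$, passing to the limit first in $\tau$ and then in $R$ (monotone convergence) produces $\int [e^{t\Lambda_\alpha}\phi]\,\di\nu \le \int [e^{t\Lambda_\alpha}\phi]\,\di\mu$. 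Taking $\liminf_{t\to 0^+}$ on both sides and invoking Fatou on the left with the auxiliary fact on the right delivers $\int\phi\,\di\nu\le\int\phi\,\di\mu$; combined with the lower bound, $\nu=\mu$.

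The main technical point is the auxiliary dominated-convergence claim. With $\supp\phi\subset B(0,R_0)$, I split the integral over $B(0,2R_0)$ and over $B(0,2R_0)^c$: on the ball it follows at once from uniform convergence (Lemma~\ref{lem - limit etL}(a)) and local finiteness $\mu(B(0,2R_0))<\infty$. On the complement, Proposition~\ref{lem - Gaussian implies A1 A2} yields for $t$ in any bounded interval $(0,t_0]$ a pointwise bound $[e^{t\Lambda_\alpha}\phi](\zeta)\le C_{\phi,t_0}(1+|\zeta|_{\HH^N})^{-Q-\alpha}$ when $\alpha\in(0,2)$, and an analogous Gaussian tail when $\alpha=2$. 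The $\mu$-integrability of this dominating function is not postulated but is forced by the pointwise finiteness $[e^{t_*\Lambda_\alpha}\mu](\eta_*)<\infty$ at some $(\eta_*,t_*)$ -- a consequence of $u<\infty$ a.e.\ together with \eqref{eq- defn sols of parabolic eqs} -- combined with the lower bound in \eqref{A1}, which transfers the finiteness into the requisite tail decay of $\mu$.
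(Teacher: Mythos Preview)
Your proof is correct and rests on the same three pillars as the paper's argument: Lemma~\ref{lem 2} to produce the trace $\nu$ and the local $L^1$-bound \eqref{eq1-lem2}, Lemma~\ref{lem - limit etL} for the uniform convergence of $e^{t\Lambda_\alpha}\phi$, and the dominated-convergence claim $\int[e^{t\Lambda_\alpha}\phi]\,\di\mu\to\int\phi\,\di\mu$, which you justify exactly as the paper does (kernel bounds from Proposition~\ref{lem - Gaussian implies A1 A2} plus $\mu$-integrability of the tail). Your variant for the integrability---pointwise finiteness of $[e^{t_*\Lambda_\alpha}\mu](\eta_*)$ combined with the lower bound in \eqref{A1}---is a perfectly valid substitute for the paper's route via \eqref{eq1-lem2}; one only has to take $t_*$ close to $T$ and restrict $t$ to a correspondingly small interval so that the Gaussian widths match in the case $\alpha=2$.

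The organizational difference is that the paper does not split into two inequalities. Instead it first tests \eqref{eq - additional condition solution} against $\phi$ and compares with the known limit $\int\phi\,\di\nu$ to force the full Duhamel integral $\int_0^t\!\int[e^{(t-s)\Lambda_\alpha}\phi]\,u(s)^p$ to vanish as $t\to 0^+$; then, testing \eqref{eq- defn sols of parabolic eqs} against $\phi$ gives $\int u(\cdot,t)\phi=\int[e^{t\Lambda_\alpha}\phi]\,\di\mu+o(1)$, and the auxiliary fact finishes. Your route sidesteps the explicit proof that the Duhamel term tends to zero: the lower bound $\nu\ge\mu$ is immediate by dropping it, and for the upper bound you rearrange the tested identity so that the Duhamel contributions on both sides cancel via \eqref{eq- defn sols of parabolic eqs}, leaving $\int[e^{t\Lambda_\alpha}\phi]\,\di\mu$ as the exact $\tau\to 0^+$ limit of $\int u(\cdot,\tau)[e^{(t-\tau)\Lambda_\alpha}\phi]$; the cutoff $\chi_R$ then extracts $\int[e^{t\Lambda_\alpha}\phi]\,\di\nu$ from below. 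Both arguments are of comparable length; yours has the minor advantage of never needing to control the Duhamel term on its own, while the paper's yields the vanishing of that term as an explicit by-product.
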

\begin{proof}
	
	Without loss of generality we need only to verify \eqref{eq 1 - limit identity} with $0\le \phi \in C_c(\HH^N)$. Let $R \ge 1$  such that $\operatorname{supp} \phi \subset B(0,R)$. 
	By Lemma~\ref{lem 2}, we can find a unique Radon measure $\nu$ on $\HH^N$ such that
	\begin{equation}\label{eq - nu}
	\operatorname*{ess\,lim}_{t \to 0^+} \int_{\HH^N} u(\eta, t) \phi(\eta) \, \di\eta  = \int_{\HH^N} \phi(\eta) \, \di\nu(\eta)
	\end{equation}
	for all $0\le \phi \in C_c(\HH^N)$.
	Furthermore, from \eqref{eq - additional condition solution} we have
	\begin{equation}\label{eq1-lemma 3}
	\begin{aligned}
		\int_{\HH^N} u(\eta,t) \phi(\eta) \, \di\eta
		&= \int_{\HH^N} \int_{\HH^N} G_\alpha(\zeta^{-1}\circ \eta,t-\tau) u(\zeta,\tau) \phi(\eta) \, \di\eta \di\zeta  \\
		&\quad + \int^{t}_{\tau} \int_{\HH^N} \int_{\HH^N} G_\alpha( \zeta^{-1}\circ \eta,t-s) u( \zeta,s)^p\phi(\eta) \, \di\eta \di\zeta  \di s \\
		&\geq \int_{B(0,R)} \int_{\HH^N} G_\alpha(\zeta^{-1}\circ \eta,t-\tau) u( \zeta,\tau) \phi(\eta) \, \di\eta \di\zeta  \\
		&\quad + \int^{t}_{\tau} \int_{\HH^N} \int_{\HH^N} G_\alpha(\zeta^{-1}\circ \eta,t-s) u(\zeta,s)^p\phi(\eta) \, \di\eta \di\zeta  \di s 
	\end{aligned}
	\end{equation}
	for a.a.~$0 < \tau < t < T$ and $\tau \in (0,T/2)$. 
	On the other hand, since $(\zeta^{-1}\circ \eta)^{-1}=\eta^{-1}\circ \zeta$,  by \eqref{eq:G2} we have
	\begin{equation}\label{eq2-lemma 3}
	\begin{aligned}
		&\int_{B(0,R)} \int_{\HH^N} G_\alpha(\zeta^{-1}\circ \eta,t-\tau) u(\zeta,\tau) \phi(\eta) \, \di\eta \di\zeta  \\
		&= \int_{B(0,R)} u(\zeta,\tau)\int_{\HH^N} G_\alpha(\eta^{-1}\circ \zeta,t-\tau) \phi(\eta) \, \di\eta   \di\zeta  \\
		&= \int_{B(0,R)} [e^{(t-\tau)\Lambda_\alpha}\phi](\zeta) u(\tau, \zeta) \, \di\zeta  \\
		&\geq \int_{B(0,R)} \phi(\zeta) u(\tau, \zeta) \, \di\zeta - \|e^{(t-\tau)\Lambda_\alpha}\phi - \phi\|_{L^\infty(B(0,R))} \int_{B(0,R)} u(\zeta,\tau) \, \di\zeta \\
		&= \int_{B(0,R)} \phi(\zeta) u( \zeta,\tau) \, \di\zeta  -\|e^{(t-\tau)\Lambda_\alpha}\phi - \phi\|_{L^\infty(B(0,R))}  \operatorname*{ess\sup}_{\tau\in(0,T/2)}\int_{B(0,R)} u(\zeta,\tau) \, \di\zeta 
	\end{aligned}
	\end{equation}
	for a.a.~$0 < \tau < t < T$ and $\tau \in (0,T/2)$.
This, in combination with \eqref{eq - nu} and Lemma~\ref{lem - limit etL}, yields
	\[
	\begin{aligned}
	&\operatorname*{ess\,liminf}_{\tau \to 0^+} \int_{B(0,R)} \int_{\HH^N} G_\alpha(\zeta^{-1}\circ \eta, t-\tau) u(\zeta, \tau) \phi(\eta) \, \di\eta \di\zeta   		\\
	& \geq \int_{\HH^N} \phi(\zeta) \, \di\nu(\zeta) - C\|e^{t\Lambda_\alpha}\phi  - \phi\|_{L^\infty(B(0,R))}
	\end{aligned}
	\]
	for some $C>0$ and  a.a.~$0 < t < T$. Taking this, \eqref{eq1-lemma 3}  and \eqref{eq2-lemma 3} into account, we obtain
	\[
	\begin{aligned}
		\int_{\HH^N} u(\eta,t) \phi(\eta) \, \di\eta  
		&\geq \int_{\HH^N} \phi(\zeta) \, \di\nu(\zeta) - C\|e^{t\Lambda_\alpha}\phi - \phi\|_{L^\infty(B(0,R))} \\
		&\quad + \int_{0}^{t} \int_{\HH^N} \int_{\HH^N} G_\alpha(\zeta^{-1}\circ \eta,t-s) u(\zeta,s)^p\phi(\eta) \, \di\eta \di\zeta  \di s
	\end{aligned}
	\]
	for a.a.~$0 < t < T$. 
	By invoking \eqref{eq - nu} and Lemma~\ref{lem - limit etL},
		\[
	\operatorname*{ess\,lim}_{t \to 0^+} \int_{0}^{t} \int_{\HH^N} \int_{\HH^N} G_\alpha(\zeta^{-1}\circ \eta,t-s) u(\zeta,s)^p\phi(\eta) \, \di\eta \di\zeta  \di s = 0.
	\]
	
This, together with \eqref{eq- defn sols of parabolic eqs}, implies that 
	\begin{equation*}
	\begin{aligned}
	&\operatorname*{ess\,lim}_{t \to 0^+}\int_{\HH^N} u(\eta,t) \phi(\eta) \, \di\eta\\
	 &=\operatorname*{ess\,lim}_{t \to 0^+}\int_{\HH^N} \int_{\HH^N} G_\alpha(\zeta^{-1}\circ \eta,t)  \phi(\eta)\, \di\eta \di\mu(\zeta)\\
	&=
	\operatorname*{ess\,lim}_{t \to 0^+}\int_{\HH^N} \int_{\HH^N} G_\alpha(\eta^{-1}\circ \zeta,t)  \phi(\eta) \, \di\eta  \di\mu(\zeta)\\
	&=
	\operatorname*{ess\,lim}_{t \to 0^+}\int_{\HH^N} [e^{t\Lambda_\alpha}\phi](\zeta)  \, \di\mu(\zeta)\\
	&=
	\operatorname*{ess\,lim}_{t \to 0^+}\int_{\HH^N} \left([e^{t\Lambda_\alpha}\phi](\zeta)-\phi(\zeta)\right) \, \di\mu(\zeta) + \int_{\HH^N} \phi(\zeta)  \, \di\mu(\zeta).
\end{aligned}
\end{equation*}
Hence, it suffices to prove the following
\[
\lim_{t\to 0^+}\int_{\HH^N} ([e^{t\Lambda_\alpha}\phi](\zeta)-\phi(\zeta))    \, \di\mu(\zeta)  = 0. 
\]
We consider two cases: $\alpha \in (0,2)$ and $\alpha=2$.

\textbf{Case 1: $\alpha \in (0,2)$.} In this case, by using \eqref{A1},
\[
\begin{aligned}
	[e^{t\Lambda_\alpha}\phi] (\zeta)
	&\lesi  \int_{B(0,R)}\f{1}{t^{Q/\alpha}}\left(1+\f{\dist_\mathbb{H}(\zeta,\xi)}{t^{1/\alpha}}\right)^{-Q-\alpha}\phi(\xi)  \, \di\xi \\
	&\lesi R^Q t(t^\f{1}{\alpha}+|\zeta|_{\HH^N})^{-Q-\alpha} \|\phi\|_{L^\infty(\HH^N)}\\
	&\lesi R^Q T |\zeta|_{\HH^N}^{-Q-\alpha} \|\phi\|_{L^\infty(\HH^N)}\\
	&\lesi R^QT \|\phi\|_{L^\infty(\HH^N)}(1+|\zeta|_{\HH^N})^{-Q-\alpha}
\end{aligned}
\]
for $\zeta\in \HH^N\backslash B(0,2R)$ and $0<t\le T/2$.
This, together with the fact $\|e^{t\Lambda_\alpha}\phi\|_{L^\infty(\HH^N)}\le \|\phi\|_{L^\infty(\HH^N)}$ (see Lemma~\ref{lem-maximal function domination}), implies
\[
	[e^{t\Lambda_\alpha}\phi] (\zeta)\le C(R,T) \|\phi\|_{L^\infty(\HH^N)} (1+|\zeta|_{\HH^N})^{-Q-\alpha}
\]
for all $\zeta\in \HH^N$ and  $0<t\le T/2$.

On the other hand, for  $\zeta\in \HH^N$ and $s\in (T/4,T/2)$, by \eqref{A1},
\[
\begin{aligned}
\int_{B(0, T^\f{1}{\alpha})}G_\alpha( \zeta^{-1}\circ \eta,s) \, \di\eta 
&\gtrsim \int_{B(0, T^\f{1}{\alpha})}\f{1}{s^{Q/\alpha}}\left(1+\f{\dist_\mathbb{H}(\eta,\zeta)}{s^{1/\alpha}}\right)^{-Q-\alpha} \, \di\eta\\
&\simeq\int_{B(0, T^\f{1}{\alpha})}\f{1}{s^{Q/\alpha}} \left(1+\f{|\zeta|_{\HH^N}}{s^{1/\alpha}}\right)^{-Q-\alpha} \, \di\eta\\
 &\ge C(T)(1+|\zeta|_{\HH^N})^{-Q-\alpha}.
\end{aligned}
\]

Therefore, for $s\in (T/4,T/2)$, by Lemma~\ref{lem 2} we have
\[
\begin{aligned}
	\int_{\HH^N}  (1+|\zeta|_{\HH^N})^{-Q-\alpha}  \, \di\mu(\zeta) 
	&\lesi \int_{\HH^N} \int_{B(0, T^\f{1}{\alpha})}G_\alpha(\zeta^{-1}\circ \eta,s)   \,\di\eta \di\mu(\zeta) \\
	&\lesi \int_{B(0, T^\f{1}{\alpha})} [e^{s\Lambda_\alpha}\mu](\eta) \, \di\eta\\
	&\lesi \sup_{s\in (T/4,T/2)}\int_{B(0, T^\f{1}{\alpha})} u(\eta,s) \, \di\eta<+\vc.
\end{aligned}
\]
This, in combination with Lemma~\ref{lem - limit etL} and the Lebesgue Domination theorem, implies that
\[
\lim_{t\to 0^+}\int_{\HH^N} ([e^{t\Lambda_\alpha}\phi](\zeta)-\phi(\zeta))   \, \di\mu(\zeta)  = 0 
\]
as desired and hence this completes the proof for the case $\alpha\in (0,2)$.

\medskip

\textbf{Case 2: $\alpha =2$.} The proof of this case has the same spirit as that of the case $\alpha\in (0,2)$. 
Since $\tau\mapsto \tau^{Q/2}\exp(-\tau/(2c_2))$ is bounded on $(0,\infty)$,
it follows from \eqref{A1} that,
\[
\begin{aligned}
	&[e^{t\Lambda_\alpha}\phi] (\zeta)\\
	&\le C_2 \int_{B(0,R)}\f{1}{t^{Q/2}}\exp\left(-\f{\dist_\mathbb{H}(\zeta,\xi)^2}{c_2t}\right) \phi(\xi)  \, \di\xi \\
	&= C_2 \int_{B(0,R)}\left[\f{\dist_\mathbb{H}(\zeta,\xi)^Q}{t^{Q/2}}\exp\left(-\f{\dist_\mathbb{H}(\zeta,\xi)^2}{2c_2t}\right) \right]\frac{1}{\dist_\mathbb{H}(\zeta,\xi)^Q}\exp\left(-\f{\dist_\mathbb{H}(\zeta,\xi)^2}{2c_2t}\right) \phi(\xi)  \, \di\xi \\
	&\lesi  R^{-Q}\int_{B(0,R)} \exp\left(-\f{\dist_\mathbb{H}(\zeta,\xi)^2}{2c_2t}\right) \phi(\xi) \, \di\xi \\
	&\lesi  \exp\left(-\f{|\zeta|_{\HH^N}^2}{2c_2t}\right) \|\phi\|_{L^\infty(\HH^N)}
	\le  \exp\left(-\f{8|\zeta|_{\HH^N}^2}{c_1T}\right) \|\phi\|_{L^\infty(\HH^N)}
\end{aligned}
\]
for  $\zeta\in \HH^N\backslash B(0, 2R)$ and $0<t\le {c_1T/(16c_2)}$.
This together with the fact $\|e^{t\Lambda_\alpha}\eta\|_{L^\infty(\HH^N)}\le C\|\eta\|_{L^\infty(\HH^N)}$ (see Lemma~\ref{lem-maximal function domination}), implies
\[
[e^{t\Lambda_\alpha}\phi](\zeta)\le C(R,T) \|\phi\|_{L^\infty(\HH^N)} \exp\left(-\f{8|\zeta|_{\HH^N}^2}{c_1T}\right)
\]
for all   $\zeta\in \HH^N$ and $0<t\le {c_1T/(16c_2)}$.
On the other hand, for  $\zeta\in \HH^N$ and $s\in (T/4,T/2)$, by \eqref{A1},
\[
\begin{aligned}
	\int_{B(0, T^\f{1}{\alpha})}G_\alpha(\zeta^{-1}\circ \eta,s) \, \di\eta &\ge C\int_{B(0, T^\f{1}{\alpha})}\f{1}{s^{Q/\alpha}}\exp\left(-\f{\dist_\mathbb{H}(\eta,\zeta)^2}{c_1s}\right) \, \di\eta\\
	&\ge C\int_{B(0, T^\f{1}{\alpha})}\f{1}{s^{Q/\alpha}} \exp\Big(-\f{2|\zeta|_{\HH^N}^2}{c_1s}\Big)\di\eta\\
	&\ge C\exp\Big(-\f{2|\zeta|_{\HH^N}^2}{c_1s}\Big)\ge C\exp\Big(-\f{8|\zeta|_{\HH^N}^2}{c_1T}\Big).\\
\end{aligned}
\]
Therefore, for $s\in (T/4,T/2)$, by Lemma~\ref{lem 2} we have
\[
\begin{aligned}
	 \int_{\HH^N}  \exp\Big(-\f{8|\zeta|_{\HH^N}^2}{c_1T}\Big)  \,   \di\mu(\zeta)  	
	 &\lesi \int_{\HH^N} \int_{B(0, T^\f{1}{\alpha})}G_\alpha(\zeta^{-1}\circ \eta,s)   \,\di\eta  \di\mu(\zeta) \\
	&= \int_{B(0, T^\f{1}{\alpha})} [e^{s\Lambda_\alpha}\mu](\eta) \, \di\eta\\
	&\lesi \sup_{s\in (T/4,T/2)}\int_{B(0, T^\f{1}{\alpha})} u(\eta,s) \, \di\eta<+\vc.
\end{aligned}
\]
This, in combination with Lemma~\ref{lem - limit etL} and the Lebesgue Domination theorem, implies that
\[
\lim_{t\to 0}\int_{\HH^N} ([e^{t\Lambda_\alpha}\phi](\zeta)-\phi(\zeta))\, \di\mu(\zeta)  = 0. 
\]
This completes the proof for the case $\alpha=2$.
The proof is complete.
\end{proof}

\section{Necessary conditions for the solvability.}
In this section we prove assertions (i)--(iii) in Theorem~\ref{main thm} and complete the proof of Theorem~\ref{main thm}.
Furthermore, we also prove Theorem~\ref{mainthm 2}.
The proof of Theorem~\ref{main thm}  is quite long and will be divided into two cases: $\alpha\in (0,2)$ and $\alpha=2$.

\bigskip

\noindent \underline{\textbf{The case $\alpha\in(0,2)$}}


\begin{lemma}
	\label{lem 5}
 Let $N\ge1$, $\alpha\in (0,2)$, and  $ p >1$. Let $u$ be a solution of \eqref{eq:Fujita} in $\HH^N\times(0,T)$, where $T \in (0,\infty)$. Then there exist positive constants $C$ and $\delta\in(0,1)$ depending only on $N$, $\alpha$ and $p$ such that
\[
\sup_{\xi\in \HH^N} \int_{B(\xi,\rho)} u(\zeta,\tau) \, \di\zeta  \leq 
\left\{
\begin{array}{ll}
\displaystyle{C\left[\log \left( e+\frac{ T^{1/\alpha}}{\rho} \right)\right]^{-\f{Q}{\alpha}}} & \mbox{if} \quad p=p_{\alpha,Q},\vspace{3pt}\\
C\rho^{Q-\frac{\alpha}{p-1}}& \mbox{if} \quad p\neq p_{\alpha,Q},\vspace{3pt}\\
\end{array}
\right.
\]
for all $\rho > 0$ with $0 < \rho^\alpha < \delta T$ and for a.a.~$\tau \in (0, \rho^\alpha)$.
\end{lemma}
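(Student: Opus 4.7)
The plan is to fix $\xi \in \HH^N$ and derive, via the ODE comparison Lemma \ref{Lemma:XODE}, a pointwise-in-time bound on
\[
H(t) := t^{Q/\alpha} F_\xi(t), \qquad F_\xi(t) := \int_{\HH^N} G_\alpha(\xi^{-1} \circ \eta, t)\, u(\eta, t)\, \di \eta,
\]
and then transfer that bound to an estimate on $\int_{B(\xi,\rho)}u(\cdot,\tau)\,\di\zeta$. The starting point is the mild identity \eqref{eq - additional condition solution}; combining it with Fubini, the symmetry \eqref{eq:G2}, and the semigroup identity \eqref{eq:G4} (applied to the composition at total time $2t-s$) yields, for $0<\tau<t$ with $2t-\tau<T$,
\begin{align*}
F_\xi(t) &= \int_{\HH^N} G_\alpha(\xi^{-1}\circ \zeta, 2t-\tau)\, u(\zeta,\tau)\, \di \zeta\\
&\quad + \int_\tau^t \int_{\HH^N} G_\alpha(\xi^{-1}\circ \zeta, 2t-s)\, u(\zeta,s)^p\, \di \zeta\, \di s.
\end{align*}

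The decisive input is the pointwise estimate
\[
G_\alpha(\xi^{-1} \circ \zeta, 2t - s) \ge C \Bigl(\frac{s}{2t}\Bigr)^{Q/\alpha} G_\alpha(\xi^{-1} \circ \zeta, s), \qquad 0 < s \le t,
\]
flagged by the authors as the reason two methods are needed. It follows immediately from Proposition \ref{lem - Gaussian implies A1 A2} when $\alpha\in(0,2)$, since the profile $g_\alpha(r)=(1+r)^{-Q-\alpha}$ is polynomial and the inner constants $c_1,c_2$ can be absorbed into the overall multiplicative constant; exactly this step fails for $\alpha=2$. Inserting this inequality into both terms and applying Jensen's inequality with the probability measure $G_\alpha(\xi^{-1}\circ \zeta, s)\,\di \zeta$ (it is a probability measure by \eqref{eq:G5}), then multiplying by $t^{Q/\alpha}$ and renaming constants, gives
\[
H(t) \ge C_0 H(\tau) + C_1 \int_\tau^t s^{-(p-1)Q/\alpha} H(s)^p\, \di s
\]
with $C_0,C_1>0$ depending only on $Q,\alpha,p$.

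This lets me invoke Lemma \ref{Lemma:XODE} with $a_1=C_0 H(\tau)$, $a_2=C_1$, $a=(p-1)Q/\alpha$, $b=p$, and $t_*=\tau$. The generic conclusion is $H(\tau)\le C\tau^{Q/\alpha-1/(p-1)}$, while in the critical case $a=1$, which is exactly $p=p_{\alpha,Q}$, one gets $H(\tau)\le C[\log(T'/(2\tau))]^{-Q/\alpha}$, where $T'\simeq T$ is the range over which the integral inequality is valid. To convert this into the ball-integral estimate in the statement I would pick $t_0=\rho^\alpha$; for $\zeta\in B(\xi,\rho)$ and $\tau<\rho^\alpha$ we have $(2t_0-\tau)^{1/\alpha}\simeq \rho$, so the lower bound in \eqref{A1} gives $G_\alpha(\xi^{-1}\circ \zeta, 2t_0-\tau)\gtrsim \rho^{-Q}$. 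Dropping the nonlinear term in the representation of $F_\xi(t_0)$ then yields
\[
F_\xi(t_0) \ge C\rho^{-Q}\int_{B(\xi,\rho)} u(\zeta,\tau)\,\di \zeta,
\]
and combining with the upper bound on $H(t_0)=t_0^{Q/\alpha}F_\xi(t_0)$ produces the claimed estimates, using that $\log(T/(2\rho^\alpha))\simeq \log(e+T^{1/\alpha}/\rho)$ on the range $\rho^\alpha<\delta T$ for $\delta$ small. The supremum in $\xi$ is free because all constants are independent of $\xi$.

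The main obstacle lies in coordinating the two time scales so that everything is consistent: the ``test time'' $\tau\in(0,\rho^\alpha)$ at which the ball-integral is measured, and the ``probe time'' $t_0\sim\rho^\alpha$ at which $F_\xi$ is evaluated. Keeping the composition $2t-\tau$ inside $(0,T)$ forces the smallness condition $\rho^\alpha<\delta T$ with $\delta$ depending only on $Q,\alpha,p$, and ensuring that $C_0,C_1$ in the integral inequality are uniform in $\xi,\tau,\rho$ is what allows Lemma \ref{Lemma:XODE} to be applied in a single sweep over all $\xi\in\HH^N$.
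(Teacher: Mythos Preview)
Your approach is essentially the paper's: the same kernel comparison $G_\alpha(\cdot,2t-s)\gtrsim (s/2t)^{Q/\alpha}G_\alpha(\cdot,s)$ (valid precisely because $\alpha<2$), Jensen against the probability measure $G_\alpha(\cdot,s)\,\di\zeta$, and then Lemma~\ref{Lemma:XODE}. Two small differences worth noting. First, the paper time-shifts by $(2\rho)^\alpha$, setting $v(\eta,t):=u(\eta,t+(2\rho)^\alpha)$; this lets the ball integral $M(\tau)=\int_{B(\xi,\rho)}u(\zeta,\tau)\,\di\zeta$ appear \emph{directly} as the constant $a_1$ in the integral inequality (with $t_*=\rho^\alpha$), so one invocation of Lemma~\ref{Lemma:XODE} finishes the job---your version instead bounds $H(t_0)$ first and then uses the representation a second time to extract the ball integral, which is correct but two-step and accounts for the slight notational overload of $\tau$ in your write-up. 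Second, you silently assume $F_\xi(t)<\infty$; the paper checks this explicitly (its inequality labelled \eqref{eq1-lem 45}) by comparing $G_\alpha(\zeta^{-1}\circ\eta,\tau)\gtrsim G_\alpha(\zeta^{-1}\circ\xi,\tau)$ for $\eta\in B(\xi,\tau^{1/\alpha})$---again a use of the polynomial profile---and you should include that verification since Lemma~\ref{Lemma:XODE} requires $f(t)<\infty$.
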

\begin{proof}
Let $\delta>0$ be  sufficiently small. Let
\begin{equation}\label{eq- rho T}
	0 < \rho < (\delta T)^\f{1}{\alpha}. 
\end{equation}
We set $v(\eta,t) := u(\eta, t + (2\rho)^\alpha)$ for a.a.~$\eta\in \HH^N$ and $t \in (0, T - (2\rho)^\alpha)$. Then it follows from \eqref{eq - additional condition solution} that
\begin{equation}
\label{eq1-lem 5}
\begin{split}
	\infty>v(t, \eta) 
	&= \int_{\HH^N} G_\alpha(\zeta^{-1}\circ \eta,t-\tau)v(\tau, \zeta) \, \di\zeta + \int_{\tau}^{t} \int_{\HH^N} G_\alpha(\zeta^{-1}\circ \eta,t-s) v(s, \zeta)^p \, \di\zeta \di s  
\end{split}
\end{equation}
for a.a.~$\eta\in \HH^N$ and $0 < \tau < t < T - (2\rho)^\alpha$. We shall show that 
\begin{equation}
	\label{eq1-lem 45}
	\int_{\HH^N} G_\alpha(\zeta^{-1}\circ \xi,\tau) v(\zeta,\tau) \, \di\zeta<\vc
\end{equation}
for a.a.~$\xi \in \HH^N$ and $\tau \in (0, [T - (2\rho)^\alpha]/3)$. 
From \eqref{A1}, 
\[
G_\alpha(\zeta^{-1}\circ \eta,\tau) \gtrsim   G_\alpha(\zeta^{-1}\circ \xi,\tau)
\]
for all $\dist_\mathbb{H}(\eta,\xi)< \tau^{1/\alpha}$. Then, using \eqref{eq1-lem 5} with $t=2\tau$, we have
\[
\vc> v(\eta,t)\ge \int_{\HH^N} G_\alpha(\zeta^{-1}\circ \eta,\tau) v(\zeta,\tau) \, \di\zeta\ge \int_{\HH^N} G_\alpha(\zeta^{-1}\circ \xi,\tau) v( \zeta,\tau) \, \di\zeta
\]
for a.a.~$\eta\in\mathbb{H}^N$ with $\dist_\mathbb{H}(\eta,\xi) \le \tau^{1/\alpha}$ and $\tau \in (0, [T - (2\rho)^\alpha]/3)$. It follows \eqref{eq1-lem 45}.
Furthermore, by \eqref{eq - additional condition solution}, \eqref{eq:G4}, and \eqref{eq1-lem 5},
\[
\begin{aligned}
	&v(\eta,t) - \int_{\tau}^{t} \int_{\HH^N} G_\alpha(\zeta^{-1}\circ \eta,t-s) v(\zeta,s)^p \, \di\zeta \di s\\
	 &= \int_{\HH^N} G_\alpha(\zeta^{-1}\circ \eta,t-\tau) u( \zeta,\tau + (2\rho)^\alpha) \, \di\zeta\\
	&\ge \int_{\HH^N} \int_{\HH^N}G_\alpha( \zeta^{-1}\circ \eta,t-\tau) G_\alpha( (\eta')^{-1}\circ \zeta,(2\rho)^\alpha)u(\eta',\tau) \, \di \eta' \di\zeta\\
	&= \int_{\HH^N}  G_\alpha( (\eta')^{-1}\circ \eta,t-\tau+ (2\rho)^\alpha)   u(\eta',\tau) \, \di \eta',
\end{aligned}
\]
which implies for $\xi \in \HH^N$,
\[
\begin{aligned}
	\int_{\HH^N}  G_\alpha(\eta^{-1}\circ \xi,t)v(\eta,t) \, \di\eta
	&\ge \int_{\HH^N}  G_\alpha((\eta')^{-1}\circ \xi,2t-\tau+ (2\rho)^\alpha) u(\eta',\tau) \, \di \eta'\\
	&  + \int_{\tau}^{t} \int_{\HH^N} G_\alpha(\zeta^{-1}\circ \xi,2t-s) v(\zeta,s)^p \, \di\zeta \di s\\
	&\ge  \inf_{\zeta\in B(\xi,\rho)}G_\alpha( \zeta^{-1}\circ \xi,2t-\tau+ (2\rho)^\alpha) \int_{B(\xi,\rho)}   u(\eta',\tau) \, \di \eta'\\
	&\int_{\rho^\alpha}^{t} \int_{\HH^N} G_\alpha( \zeta^{-1}\circ \xi,2t-s) v(\zeta,s)^p \, \di\zeta \di s\\
\end{aligned}
\]
for a.a.~$0 < \tau < \rho^{\alpha}<t < [T - (2\rho)^\alpha]/3$ and   $\xi\in \HH^N$. 
From \eqref{A1},
\[
\inf_{\zeta\in B(\xi,\rho)}G_\alpha(\zeta^{-1}\circ \xi,2t-\tau+ (2\rho)^\alpha)\ge c_0 (2t-\tau+ (2\rho)^\alpha)^{-\f{Q}{\alpha}}
\]
and
\[
\begin{aligned}
	G_\alpha(\zeta^{-1}\circ \xi,2t-s)
	\simeq \f{1}{t^{Q/\alpha}}g_\alpha\left(\f{\dist_\mathbb{H}(\xi,\zeta)}{(2t-s)^{1/\alpha}}\right) 
	&\gtrsim C_2  \left(\f{s}{2t}\right)^\f{Q}{\alpha} \f{1}{s^{Q/\alpha}}g_\alpha\left(\f{\dist_\mathbb{H}(\xi,\zeta)}{c_2s^{1/\alpha}}\right) \\
	&\ge  \left(\f{s}{2t}\right)^\f{Q}{\alpha}G_\alpha(\zeta^{-1}\circ \xi,s)
\end{aligned}
\]
for $0<s<t$.
This, in combination with \eqref{eq:G5} and Jensen's inequality, further implies
\[
\begin{aligned}
	\int_{\HH^N} G_\alpha(\eta^{-1}\circ \xi,t)v(\eta,t)  \, \di\eta 
	&\gtrsim (2t-\tau+ (2\rho)^\alpha)^{-\f{Q}{\alpha}} \int_{B(\xi,\rho)}   u(\eta',\tau) \, \di \eta'\\
	&+ \int_{\rho^\alpha}^{t} \left(\f{s}{2t}\right)^\f{Q}{\alpha} \int_{\HH^N} G_\alpha(\zeta^{-1}\circ \xi,s) v(\zeta, s)^p \, \di\zeta \di s \\
	&\gtrsim  t^{-\f{Q}{\alpha}} \int_{B(\xi,\rho)}   u(\eta',\tau) \, \di \eta'\\
	&+ \int_{\rho^\alpha}^{t} \left(\f{s}{2t}\right)^\f{Q}{\alpha} \left[\int_{\HH^N} G_\alpha(\zeta^{-1}\circ \xi,s) v(\zeta,s) \, \di\zeta\right]^p \, \di s \\
\end{aligned}
\]
for a.a.~$\xi\in \HH^N$ and $0 < \tau < \rho^{\alpha}<t < [T - (2\rho)^\alpha]/3$.

We now set
\[
m(t) := t^\frac{Q}{\alpha}\int_{\HH^N}  G_\alpha(\eta^{-1}\circ \xi,t) v(\eta,t) \, \di\eta, \quad M(\tau) := \int_{B(\xi, \rho)} u(\zeta,\tau) \, \di\zeta.
\]
Then we can  rewrite the above inequality as
\begin{equation*}
	\begin{aligned}
		\vc> m(t)
		&\ge  C  M(\tau) + C \int_{\rho^\alpha}^{t} s^{-\f{Q}{\alpha}(p-1) }m(s)^p \, \di s
	\end{aligned}
\end{equation*}
for a.a.~$0 < \tau < \rho^\alpha < t < [T - (2\rho)^\alpha]/3$. 

In the case of $p=p_{\alpha,Q}$,
note that $Q(p-1)/\alpha = 1$.
It follows from Lemma~\ref{Lemma:XODE} with $t_* = \rho^\alpha$ that
\begin{equation*}
\begin{split}
M(\tau) = \int_{B(\xi, \rho)} u(\zeta,\tau) \, \di\zeta \leq C \left[ \log \left( \frac{T}{2\rho^\alpha} \right) \right]^{-\f{Q}{\alpha}} \leq C \left[ \log \left( e+ \frac{ T}{\rho^\alpha} \right) \right]^{-\f{Q}{\alpha}}\end{split}
\end{equation*}
for all  $0 < \rho^\alpha < \delta T$
and a.a.~$0 < \tau < \rho^\alpha$. 
In the case of $p\neq p_{\alpha,Q}$, similarly we have
\begin{equation*}
\begin{split}
M(\tau) = \int_{B(\xi, \rho)} u(\zeta,\tau) \, \di\zeta \leq C \rho^{Q-\frac{\alpha}{p-1}}
\end{split}
\end{equation*}
for all  $0 < \rho^\alpha < \delta T$
and a.a.~$0 < \tau < \rho^\alpha$. 
This completes our proof.
\end{proof}

\begin{proof}
	[Proof of Theorem~{\rm \ref{main thm}} in the case of $\alpha\in (0,2)$]
	By Lemma~\ref{lem 2} we can find a unique Radon measure $\nu$
		satisfying \eqref{eq2-lem2}. Let $\xi \in \mathbb{H}^N$ and $\psi\in C_c(\mathbb{H}^N)$ be such that
		\begin{equation*}
		\psi=1 \, \,\mbox{in} \, \,B(\xi,\rho/2), \quad 0\le \psi\le1 \, \,\mbox{in} \, \,\mathbb{H}^N, \quad
		\psi = 0 \, \, \mbox{in}\,\, \mathbb{H}^N\setminus B(\xi,\rho).
		\end{equation*}
		 By Lemma~\ref{lem 2} there exists a unique nonnegative Radon measure $\nu$ on $\mathbb{H}^N$ such that
		\begin{equation*}
		\begin{split}
		\operatorname*{ess\,lim}_{\tau \to 0^+} \int_{B(\xi, \rho)} u(\zeta,\tau) \, \di\zeta
		 &\geq \operatorname*{ess\,lim}_{\tau \to 0^+} \int_{B(\xi, \rho)} u(\tau, \zeta) \psi(\zeta) \, \di\zeta \\
		&= \int_{\HH^N} \psi(\zeta)  \, \di\nu(\zeta) \geq \nu(B(\xi, \rho/2))
		\end{split}
		\end{equation*}
		for all $\xi\in \HH^N$ and $0 < \rho < (\theta T)^{1/\alpha}$, where $\theta>0$ is as in \eqref{eq- rho T}.
		This, together with Lemma~\ref{lem 5}, implies that
\[
\sup_{\xi\in \mathbb{H}^N}\nu(B(\xi, \rho/2)) \leq 
\left\{
\begin{array}{ll}
\displaystyle{C\left[\log \left( e+\frac{ T^{1/\alpha}}{\rho} \right)\right]^{-\f{Q}{\alpha}}} & \mbox{if} \quad p=p_{\alpha,Q},\vspace{3pt}\\
C\rho^{Q-\frac{\alpha}{p-1}}& \mbox{if} \quad p\neq p_{\alpha,Q},\vspace{3pt}\\
\end{array}
\right.
\]
for all $0 < \rho < (\theta T)^{1/\alpha}$.

Setting $\sigma := \theta^{-1/\alpha}\rho$, then by Lemma~\ref{lem-covering lemma} (a) we see that
there exist a positive integer $m$ depending only on $N$, $\alpha$, and $p>1$ and $\{\xi_j\}_{j=1}^m$ such that
\[
B(\xi, \sigma) \subset \bigcup_{j=1}^m B(\xi_j, \rho/2).
\]		
This implies that
\begin{equation*}
\begin{split}
\sup_{\xi\in \mathbb{H}^N}\nu(B(\xi, \sigma))  &\le m \sup_{\xi\in \mathbb{H}^N}\nu(B(\xi, \rho/2))\\
& \le \left\{
\begin{array}{ll}
\displaystyle{Cm\left[\log \left( e+\frac{ T^{1/\alpha}}{\rho} \right)\right]^{-\f{Q}{\alpha}}} & \mbox{if} \quad p=p_{\alpha,Q},\vspace{3pt}\\
Cm\rho^{Q-\frac{\alpha}{p-1}}& \mbox{if} \quad p\neq p_{\alpha,Q},\vspace{3pt}\\
\end{array}
\right.
\end{split}
\end{equation*}
By the definition of $\sigma$, we obtain
\[
\sup_{\xi\in \mathbb{H}^N}\nu(B(\xi, \sigma)) \leq 
\left\{
\begin{array}{ll}
\displaystyle{C\left[\log \left( e+\frac{ T^{1/\alpha}}{\sigma} \right)\right]^{-\f{Q}{\alpha}}} & \mbox{if} \quad p=p_{\alpha,Q},\vspace{3pt}\\
C\sigma^{Q-\frac{\alpha}{p-1}}& \mbox{if} \quad p\neq p_{\alpha,Q},\vspace{3pt}\\
\end{array}
\right.
\]
for all $0 < \sigma < T^{1/\alpha}$.
This is the desired inequality. Thus, Theorem~\ref{main thm} in the case of $\alpha\in(0,2)$ follows.
\end{proof}

\bigskip

\noindent \underline{\textbf{The case $\alpha=2$}}
In this case, we consider solutions of problem \eqref{eq:Fujita} with \eqref{eq:Fujitaini} in the following weak framework.

\begin{definition}
	\label{Definition:weak}
	Let $u$ be a nonnegative measurable function in $\mathbb{H}^N\times(0,T)$, where $T\in (0,\infty)$.
	We say that $u$ is a weak solution of problem  \eqref{eq:Fujita} in $\mathbb{H}^N\times[0,T)$ if $u \in L^p_{\rm loc}(\mathbb{H}^N\times[0,T))$ and $u$ satisfies 
	\begin{equation}
		\label{eq:weak}
		\begin{split}
			&\int_{0}^{T} \int_{\mathbb{H}^N} u(\eta,t)^p \varphi(\eta,t)\, \di \eta\di t + \int_{\mathbb{H}^N}  \varphi(\eta,0) \, \di\mu(\eta)\\
			&\qquad \qquad \qquad \qquad = \int_{0}^{T} \int_{\mathbb{H}^N} u(\eta,t) (-\partial_t-\Delta_\mathbb{H}) \varphi(\eta,t) \,\di\eta \di t
		\end{split}
	\end{equation}
	for all  $\varphi\in C^{2,1}_0(\mathbb{H}^N\times[0,T))$.
	If $u$ satisfies \eqref{eq:weak} with $=$ replaced by $\le$, then $u$ is said to be a  weak  supersolution.
\end{definition}
\begin{lemma}
	\label{Lemma:3.1}
	Let $u$ be a solution of problem \eqref{eq:Fujita} with \eqref{eq:Fujitaini} in $\mathbb{H}^N\times [0,T)$, where $T\in (0,\infty)$.
	Then $u$ is also a weak solution.
\end{lemma}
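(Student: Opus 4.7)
The idea is to substitute the mild formulation \eqref{eq- defn sols of parabolic eqs} into the right-hand side of the weak identity \eqref{eq:weak} and verify the identity termwise via the self-adjointness of the heat semigroup. Fix $\varphi\in C^{2,1}_0(\mathbb{H}^N\times[0,T))$; in particular, $\varphi(\cdot,T)\equiv 0$ and $\varphi$ is spatially compactly supported for each $t$. Inserting \eqref{eq- defn sols of parabolic eqs} produces the splitting
\begin{equation*}
\int_0^T\!\int_{\mathbb{H}^N} u(\eta,t)(-\partial_t-\Delta_\mathbb{H})\varphi(\eta,t)\,\di\eta\,\di t \;=\; I_\mu+I_{u^p},
\end{equation*}
where $I_\mu$ collects the contribution of $[e^{t\Lambda_2}\mu](\eta)$ and $I_{u^p}$ that of the Duhamel term $\int_0^t[e^{(t-s)\Lambda_2}u(s)^p](\eta)\,\di s$.

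The central tool is the self-adjointness of $e^{t\Lambda_2}$, an immediate consequence of the symmetry \eqref{eq:G2}: for sufficiently regular $\psi$, $\int(e^{t\Lambda_2}\mu)\psi\,\di\eta=\int(e^{t\Lambda_2}\psi)\,\di\mu$. Defining $\Phi(\zeta,t):=[e^{t\Lambda_2}\varphi(\cdot,t)](\zeta)$, the chain rule together with $\partial_s(e^{s\Lambda_2}f)=\Delta_\mathbb{H}(e^{s\Lambda_2}f)$ and the commutativity of $\Delta_\mathbb{H}$ with $e^{t\Lambda_2}$ gives
\begin{equation*}
[e^{t\Lambda_2}(-\partial_t\varphi-\Delta_\mathbb{H}\varphi)(\cdot,t)](\zeta) \;=\; -\partial_t\Phi(\zeta,t).
\end{equation*}
A Fubini interchange yields $I_\mu=\int[\Phi(\zeta,0)-\Phi(\zeta,T)]\,\di\mu(\zeta)=\int\varphi(\zeta,0)\,\di\mu(\zeta)$, since $\varphi(\cdot,T)\equiv 0$. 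For $I_{u^p}$, exchange the order of $s$ and $t$ on $\{0<s<t<T\}$ and introduce $\Psi_s(\zeta,t):=[e^{(t-s)\Lambda_2}\varphi(\cdot,t)](\zeta)$; the same computation shows the inner $t$-integral equals $-\partial_t\Psi_s$, which integrates to $\Psi_s(\zeta,s)=\varphi(\zeta,s)$. Hence $I_{u^p}=\int_0^T\!\int u(\zeta,s)^p\varphi(\zeta,s)\,\di\zeta\,\di s$, and combining the two evaluations reproduces \eqref{eq:weak} exactly.

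\textbf{Main obstacle.} The delicate point is justifying the Fubini interchanges and the semigroup manipulations rigorously. Absolute convergence rests on three ingredients: the compact support and uniform boundedness of $\varphi,\partial_t\varphi,\Delta_\mathbb{H}\varphi$; the a.e.\ finiteness of $e^{t\Lambda_2}\mu$ and of the Duhamel integral afforded by \eqref{eq- defn sols of parabolic eqs}; and the $L^\infty$-contractivity in Lemma~\ref{lem-maximal function domination}, which allows $\Phi$ and $\Psi_s$ to be controlled uniformly. The required $L^p_{\rm loc}$-regularity of $u$ can be extracted by combining the local $L^1$-bound \eqref{eq1-lem2} of Lemma~\ref{lem 2} with Jensen's inequality applied to the Duhamel term in \eqref{eq- defn sols of parabolic eqs}. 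If any of these estimates proves insufficient on the raw data, one can mollify by replacing $\mu$ and $u$ with $\mu\ast\rho_\epsilon$ and $u\ast\rho_\epsilon$ (via convolution on $\mathbb{H}^N$), run the above computation for the smoothed problem, and pass to the limit $\epsilon\to 0^+$ using the absolute bounds just listed.
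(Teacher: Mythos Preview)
Your approach is essentially the same as the paper's: both transfer the semigroup onto the test function via the kernel symmetry \eqref{eq:G2}, use the identity $e^{t\Lambda_2}[(-\partial_t-\Delta_{\mathbb H})\varphi(\cdot,t)]=-\partial_t\bigl[e^{t\Lambda_2}\varphi(\cdot,t)\bigr]$, and integrate in time; the paper simply writes this out as an explicit integration by parts on $G(\zeta^{-1}\circ\eta,t)$ against $\varphi$ rather than packaging it with your auxiliary functions $\Phi,\Psi_s$.

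One correction to your sketch: the route to $u\in L^p_{\rm loc}$ via ``Jensen's inequality applied to the Duhamel term'' does not work---Jensen would give $\int G\,u^p\ge(\int G\,u)^p$, which is the wrong direction. The paper instead uses the Gaussian \emph{lower} bound in \eqref{A1}: for a.a.\ $\eta$ and some $t\in(T-\epsilon,T)$ with $u(\eta,t)<\infty$, the Duhamel part of \eqref{eq- defn sols of parabolic eqs} dominates $C(T,\epsilon)\int_0^{T-2\epsilon}\!\int \exp\!\bigl(-\dist_{\mathbb H}(\eta,\zeta)^2/(c_1\epsilon)\bigr)\,u(\zeta,s)^p\,\di\zeta\,\di s$, and the exponential is bounded below on any compact set, yielding $u^p\in L^1_{\rm loc}(\mathbb H^N\times[0,T))$ directly. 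With this fix your Fubini justifications go through as you indicate, since $e^{t\Lambda_2}\mu\le u(\cdot,t)$ and the Duhamel term are both controlled by $u$, whose local $L^1$ bound in space--time follows from \eqref{eq1-lem2}.
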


\begin{proof}
	Assume that problem \eqref{eq:Fujita} with \eqref{eq:Fujitaini} possesses a solution in $\mathbb{H}^N\times(0,T)$, where $T\in (0,\infty)$.
	
	We shall prove that $u\in L^p_{\rm loc} ( [0,T) \times \mathbb{H}^N)$.
	Let $\epsilon\in(0,T/2)$.  By \eqref{A1} and \eqref{eq - additional condition solution} we find $t\in(T-\epsilon,T)$ such that
	\begin{equation*}
		\begin{split}
			\infty&>u(\eta,t) \ge \int_{0}^{T-2\epsilon} \int_{\mathbb{H}^N} G(\zeta^{-1}\circ \eta,t-s) u( \zeta,s)^p\,\di\eta \di s\\
			&\ge C_1 \int_0^{T-2\epsilon} \int_{\mathbb{H}^N}  (t-s)^{-\frac{Q}{\alpha}} \exp\left(-\frac{  \dist_\mathbb{H}(\eta,\zeta)^2}{c_1(t-s)}\right) u(\zeta,s)^p\, \di\zeta \di s\\
			&\ge  C_1T^{-\frac{N}{2}} \int_0^{T-2\epsilon} \int_{\mathbb{H}^N}  \exp\left(-\frac{ c_1 \dist_\mathbb{H}(\eta,\zeta)^2}{c_1\epsilon}\right) u(\zeta,s)^p\,\di \zeta \di s
		\end{split}
	\end{equation*}
	for a.a.~$\eta\in{\mathbb{H}^N}$. Since $\epsilon\in(0,T/2)$ is arbitrary, we see that $u\in L_{\rm loc}^p(\mathbb{H}^N\times[0,T))$.

	Let $\varphi\in C^\infty_0(\mathbb{H}^N\times[0,T))$. 
	By the integral by parts and \eqref{eq:IBP} we have
	\begin{equation*}
		\begin{split}
			&\int_{\mathbb{H}^N} \varphi(\zeta,0) \, \di\mu(\zeta)\\
			&= \int_{\mathbb{H}^N} \left(\int_0^T\int_{\mathbb{H}^N}(\partial_t-\Delta_\mathbb{H})G( \zeta^{-1}\circ\eta,t)\cdot \varphi(\eta,t)\,\di\eta \di t+ \varphi(\zeta,0)\right) \,\di\mu(\zeta)\\
			&= \int_{\mathbb{H}^N} \int_0^T\int_{\mathbb{H}^N}G(\zeta^{-1}\circ\eta,t)(-\partial_t-\Delta_\mathbb{H})\varphi(\eta,t)\,\di\eta \di t \di\mu(\zeta)\\
			&= \int_0^T\int_{\mathbb{H}^N}\left(\int_{\mathbb{H}^N}G(\zeta^{-1}\circ\eta,t)\,\di\mu(\zeta)\right)(-\partial_t-\Delta_\mathbb{H})\varphi( \eta,t)\,\di \eta \di t\\
			&= \int_0^T\int_{\mathbb{H}^N}(-\partial_t-\Delta_\mathbb{H})\varphi(\eta,t)\,\di \eta \di t.
		\end{split}
	\end{equation*}
	Similarly, we have
	\begin{equation*}
		\begin{split}
			&\int_0^T\int_{\mathbb{H}^N} \varphi(\zeta,s)  u(\zeta,s)^p\, \di \zeta \di s\\
			&= \int_0^T\int_{\mathbb{H}^N} \left(\int_s^T\int_{\mathbb{H}^N}(\partial_t-\Delta_\mathbb{H})G(\zeta^{-1}\circ\eta,t-s)\cdot \varphi(\eta,t)\,\di \eta \di t + \varphi(\zeta,s)\right) u(\zeta,s)^p\,\di\zeta \di s\\
			&= \int_0^T\int_{\mathbb{H}^N} \left(\int_s^T\int_{\mathbb{H}^N}G(\zeta^{-1}\circ\eta,t-s)(-\partial_t-\Delta_\mathbb{H})\varphi(\eta,t)\,\di \eta \di t \right)u(\zeta,s)^p\,\di \zeta \di s\\
			&= \int_0^T\int_{\mathbb{H}^N} \left(\int_0^T\int_{\mathbb{H}^N}G(\zeta^{-1}\circ\eta,t-s)u( \zeta,s)^p\,\di \zeta \di s \right)(-\partial_t-\Delta_\mathbb{H})\varphi(\eta,t)\,\di \eta \di t.
		\end{split}
	\end{equation*}
	Then
	\begin{equation*}
		\begin{split}
			&\int_{0}^T\int_{\mathbb{H}^N} u(\eta,t)(-\partial_t-\Delta_\mathbb{H}) \varphi(\eta,t)\,\di \eta \di t\\
			&= \int_{0}^T\int_{\mathbb{H}^N} \left(\int_{\mathbb{H}^N}G(\zeta^{-1}\circ\eta,t)\, \di\mu(\zeta)+\int_0^T\int_{\mathbb{H}^N}G(\zeta^{-1}\circ\eta,t-s)u(\zeta,s)^p\,\di \zeta \di s\right)\\
			&\qquad\qquad\times(-\partial_t-\Delta_\mathbb{H}) \varphi( \eta,t)\,\di \eta \di t\\
			&= \int_{\mathbb{H}^N} \varphi(\zeta,0) \, \di\mu(\zeta)+ \int_0^T\int_{\mathbb{H}^N} \varphi(\zeta,s)  u(\zeta,s)^p\, \di\zeta \di s,
		\end{split}
	\end{equation*}
	which implies \eqref{eq:weak}. Then Lemma~\ref{Lemma:3.1} follows.
\end{proof}

\begin{proof}[Proof of Theorem~{\rm \ref{main thm}} in the case of $\alpha=2$]
	The proof follows the arguments in \cite[Theorem 1.2]{IKO20}.
	Let $u$ be a solution of  problem \eqref{eq:Fujita} with \eqref{eq:Fujitaini} in $\mathbb{H}^N\times[0,T)$, where $T\in(0,\infty)$.
	From the proof in the case of $\alpha\in(0,2)$,  it is sufficient to show that initial data $\mu$ satisfies assertions (i)--(iii) in Theorem~\ref{main thm}.
	It follows from Lemma~\ref{Lemma:3.1} that $u$ satisfies \eqref{eq:weak}.
	Let $\rho\in (0, \sqrt{T}/8)$.
	In what follows, denote
	\[
	\eta = (x,y,\tau)\in \mathbb{H}^N \quad\mbox{and}\quad \zeta = (x',y',\tau') \in \mathbb{H}^N.
	\]
	Note that
	\[
	\zeta^{-1} \circ \eta = (x-x', y-y', \tau-\tau' + 2(x\cdot y' - x'\cdot y)).
	\]
	
	Let 
	\[
	f(s) := e^{-\frac{1}{s}}\quad \mbox{if} \quad s>0, \qquad f(s) = 0\quad \mbox{if} \quad s\le0. 
	\]
	Set 
	\[
	F(s) := \frac{f(2-s)}{f(2-s)+ f(s-1)}.
	\]
	Then $F \in C^\infty([0,\infty))$ and 
	\begin{equation*}
		\begin{split}
			&F' (s) = \frac{-f'(2-s)f(s-1)- f(2-s)f'(s-1)}{[f(2-s)+f(s-1)]^2} \le 0 \quad \mbox{on} \quad [0,\infty),\\
			& F(s) = 1 \quad \mbox{on} \quad [0,1], \qquad F(s)=0 \quad \mbox{on} \quad [2,\infty).
		\end{split}
	\end{equation*}
	Set 
	\[
	F^*(s) := 0 \quad \mbox{on} \quad [0,1), \qquad F^*(s) := F(s) \quad \mbox{on} \quad [1,\infty).
	\]
	Since $p>1$, for any $k\in \mathbb{N}$, we can find $C_k>0$ such that 
	\begin{equation}
		\label{eq:3.1}
		|F^{(k)}(s)| \le C_k F^*(s)^\frac{1}{p} \quad \mbox{for all} \quad s\ge1.
	\end{equation}
	For any $R\in (0,T]$, we set
	\[
	\phi_R(\eta,t) := F\left(9\frac{|\eta|_{\mathbb{H}^N}^4 + t^2}{R^2}\right),\qquad 
	\phi^*_R(\eta,t) := F^*\left(9\frac{|\eta|_{\mathbb{H}^N}^4 + t^2}{R^2}\right),
	\]
	and for any $\zeta\in \mathbb{H}^N$, we set 
	\begin{equation*}
		\begin{split}
			&\psi_R(\eta,t) := \phi_R(\zeta^{-1}\circ\eta,t) = F\left(9\frac{\dist_\mathbb{H}(\eta,\zeta)^4 + t^2}{R^2}\right),\\
			&\psi^*_R(\eta,t) := \phi^*_R(\zeta^{-1}\circ\eta,t) = F^*\left(9\frac{\dist_\mathbb{H}(\eta,\zeta)^4 + t^2}{R^2}\right).\\
		\end{split}
	\end{equation*}
	For the simplicity of notation, set
	\[
	s_R(\eta,t) := 9\frac{|\eta|_{\mathbb{H}^N}^4 + t^2}{R^2} =  9\frac{\dist_\mathbb{H}(\eta,0)^4 + t^2}{R^2}.
	\]
	Note that
	\begin{equation}
		\label{eq:3.2}
		\Delta_\mathbb{H} \psi_R(\eta,t) = [\Delta_\mathbb{H}\phi_R](\zeta^{-1}\circ\eta,t)
	\end{equation}
	for all $\eta,\zeta\in \mathbb{H}^N$ and $t>0$.
	Then we shall calculate the derivatives of $\phi_R(\eta,t)$.
	Since $\supp F^*(s_R(\eta,t)) = \{(\eta,t) \in\mathbb{H}^N\times  [0,\infty): 1\le s_R(\eta,t)\le2\}$,
	by \eqref{eq:3.1} we may assume that there exists  $C>0$ such that  $t\in [0,\infty)$ and $\eta = (x,y,\tau)$ satisfy
	\begin{equation}
		\label{eq:3.3}
		|x|\le CR^\frac{1}{2}, \quad |y| \le CR^\frac{1}{2}, \quad |\tau| \le CR, \quad t\le CR. 
	\end{equation}
	By \eqref{eq:3.1} and \eqref{eq:3.3} we have
	\begin{equation}
		\label{eq:3.4}
		\begin{split}
			&|\partial_t \phi_R(\eta,t)| \lesi \frac{1}{R} F^*(s_R(\eta,t))^\frac{1}{p} 
			= \frac{1}{R} \phi_R^*(\eta,t)^\frac{1}{p},\\
			&|\partial^2_{x_j}\phi_R(\eta,t)| \lesi \frac{1}{R} F^*(s_R( \eta,t))^\frac{1}{p} 
			= \frac{1}{R} \phi_R^*(\eta,t)^\frac{1}{p},\\
			&|y_j\partial^2_{x_j,\tau}\phi_R (\eta,t)| \lesi  \frac{1}{R} F^*(s_R(\eta,t))^\frac{1}{p} 
			= \frac{1}{R} \phi_R^*(\eta,t)^\frac{1}{p},\\
			&|x_j\partial^2_{y_j,\tau}\phi_R(\eta,t)| \lesi  \frac{1}{R} F^*(s_R(\eta,t))^\frac{1}{p} 
			= \frac{1}{R} \phi_R^*(\eta,t)^\frac{1}{p},\\
			&|(|x|^2+|y|^2)\partial^2_{\tau}\phi_R(\eta,t)|   \lesi \frac{1}{R} F^*(s_R(\eta,t))^\frac{1}{p} 
			= \frac{1}{R} \phi_R^*(\eta,t)^\frac{1}{p}.
		\end{split}
	\end{equation}
	By \eqref{eq:Lap}, \eqref{eq:3.2}, and \eqref{eq:3.4} we see that
	\begin{equation}
		\label{eq:3.5}
		|(-\partial_t - \Delta_\mathbb{H}) \psi_R(\eta,t)| \lesi \frac{1}{R}  \psi_R^*(\eta,t)^\frac{1}{p}.
	\end{equation}
	
	Substituting $\varphi = \psi_R$ into \eqref{eq:weak}, by \eqref{eq:3.5} and H\"{o}lder's inequality, we obtain
	\begin{equation}
		\label{eq:3.6}
		\begin{split}
			0
			&\le\int_0^T \int_{\mathbb{H}^N} u(\eta,t)^p \psi_R(\eta,t)\, \di \eta\di t + \int_{\mathbb{H}^N}  \psi_R(\eta,0) \, \di\mu(\eta)\\
			&\le \int_0^R \int_{\mathbb{H}^N} u(\eta,t) |(-\partial_t-\Delta_\mathbb{H}) \psi_R(\eta,t) |\,\di\eta \di t\\
			&\lesi \frac{1}{R} \int_0^R \int_{\mathbb{H}^N} u( \eta,t)  \psi^*_R(\eta,t)^\frac{1}{p} \,\di\eta \di t\\
			&\le \frac{1}{R} \left( \int\int_{\supp \psi_R^*}\, \di\eta\di t\right)^{1-\frac{1}{p}} \left(\int_0^R\int_{\mathbb{H}^N}u(\eta,t)^p\psi_R^*(\eta,t) \di\eta\di t\right)^\frac{1}{p}
		\end{split}
	\end{equation}
	for all $R\in (2\rho^2,T/2)$. On the other hand, it follows from \eqref{eq:6.2} that 
	\begin{equation*}
		\begin{split}
			&\frac{1}{R} \left( \int\int_{\supp \psi_R^*}\, \di\eta\di t\right)^{1-\frac{1}{p}}
			\le\frac{1}{R} \left(\int_0^{\frac{\sqrt{2}}{3}R}\int_{B(\zeta, (2/9)^\frac{1}{4}\sqrt{R})}\, \di\eta\di t\right)^{1-\frac{1}{p}}
			 \lesi R^{\frac{Q}{2p}(p-1)-\frac{1}{p}},\\
	\text{and} \ \ 		&\int_{\mathbb{H}^N}  \psi_R(\eta,0) \, \di\mu(\eta) \ge \mu (B(\zeta,9^{-\frac{1}{4}}R^\frac{1}{2})) \ge
			\mu (B(\zeta, (2/3)^\frac{1}{2}\rho))  =: m_\rho
		\end{split}
	\end{equation*}
	for all  $R\in (2\rho^2,T/2)$.
	This, together with \eqref{eq:3.6}, implies that 
	\begin{equation}
		\label{eq:3.7}
		\begin{split}
			&m_\rho + \int_0^T \int_{\mathbb{H}^N} u(\eta,t)^p \psi_R(\eta,t)\, \di \eta\di t  \le  CR^{\frac{Q}{2p}(p-1)-\frac{1}{p}} \left(\int_0^R\int_{\mathbb{H}^N}u(\eta,t)^p\psi_R^*(\eta,t) \di\eta\di t\right)^\frac{1}{p}
		\end{split}
	\end{equation}
	for all $R\in (2\rho^2,T/2)$.
	Let $\epsilon>0$ be a sufficiently small positive constant. For any $R\in (2\rho^2,T/2)$, set
	\begin{equation}
		\label{eq:3.8}
		\begin{split}
		&z(r) := \int_0^R \int_{\mathbb{H}^N} u(\eta,t)^p \psi_r^* (\eta,t) \,\di\eta\di t,\\
		&Z(R) := \int_0^R z(r) \min\{r^{-1}, \epsilon^{-1}\} \di r.
		\end{split}
	\end{equation}
	Since $F^*$ is decreasing on $[1,\infty)$ and $\supp \xi^*\subset [1,2]$, for any $(\eta,t) \in  \mathbb{H}^N\times(0,T)$ with $9(\dist_\mathbb{H}(\eta,\zeta)^4 + t^2)\ge R^2$, we have
	\begin{equation}
		\label{eq:3.9}
		\begin{split}
			\int_0^R \psi_r^* (\eta,t) \min\{r^{-1},\epsilon^{-1}\} \,\di r 
			&= \int_0^R F^*\left(9\frac{\dist_\mathbb{H}(\eta,\zeta)^4 + t^2}{R^2}\right)r^{-1} \,\di r\\
			&=\frac{1}{2} \int_{9(\dist_\mathbb{H}(\eta,\zeta)^4 + t^2)/R^2} F^*(s) s^{-1} \, \di s\\
			&\le \frac{1}{2} F^*\left(9\frac{\dist_\mathbb{H}(\eta,\zeta)^4 + t^2}{R^2}\right) \int_1^2 s^{-1}\,\di s
			\le C' \psi_R^*(\eta,t).
		\end{split}
	\end{equation}
	Since $\psi_R^*(\eta,t) = 0$ if $9(\dist_\mathbb{H}(\eta,\zeta)^4 + t^2)< R^2$, by \eqref{eq:3.8} and \eqref{eq:3.9} we obtain
	\begin{equation}
		\label{eq:3.10}
		\begin{split}
			&\int_0^R \int_{\mathbb{H}^N} u(\eta,t)^p \psi_R(\eta,t) \, \di\eta \di t \\
			&\ge \int_0^R \int_{\mathbb{H}^N} u(\eta,t)^p \psi_R^*(\eta,t) \, \di\eta \di t \\
			&\ge C'^{-1} \int_0^R \int_{\mathbb{H}^N} u(\eta,t)^p \left(\int_0^R \psi_r^*(\eta,t) \min\{r^{-1}, \epsilon^{-1}\} \, \di r \right) \, \di \eta\di t\\
			& = C'^{-1}\int_0^R\int_0^R\int_{\mathbb{H}^N}  u(\eta,t)^p \psi_r^*(\eta,t) \min\{r^{-1}, \epsilon^{-1}\} \, \di\eta \di t \di r\\
			&= C'^{-1} Z(R).
		\end{split}
	\end{equation}
	Therefore, we deduce from \eqref{eq:3.7}, \eqref{eq:3.8}, and \eqref{eq:3.10} that
	\begin{equation*}
		m_\rho + C'^{-1} Z(R) \le C R^{\frac{Q}{2p}(p-1)-\frac{1}{p}}(\max\{R, \epsilon\} Z'(R))^\frac{1}{p}
	\end{equation*}
	for all  $R\in (2\rho^2,T/2)$. Therefore, we have
	\begin{equation}
		\label{eq:3.11}
		\begin{split}
		&\int_{Z(2\rho^2)}^{Z(T/2)} [m_\rho + C'^{-1} Z]^{-p} \,\di Z \ge C^{-1} \int_{2\rho^2}^{T/2} R^{-\frac{Q}{2}(p-1)+1} (\max\{R, \epsilon\})^{-1} \di R.
		\end{split}
	\end{equation}
	Since
	\begin{equation*}
		\begin{split}
			\int_{Z(2\rho^2)}^{Z(T/2)} [m_\rho + C'^{-1} Z]^{-p} \,\di Z 
			& \le \int_{Z(2\rho^2)}^\infty [m_\rho + C'^{-1} Z]^{-p} \,\di Z\\
			&  \le \frac{C}{p-1} (Z(2\rho^2) + m_\rho)^{-(p-1)}\\
			& \le \frac{C}{p-1}m_\rho^{-(p-1)}, 
		\end{split}
	\end{equation*}
	by \eqref{eq:3.11} we obtain
	\[
	\frac{C}{p-1}m_\rho^{-(p-1)} \ge C^{-1} \int_{2\rho^2}^{T/2}R^{-\frac{Q}{2}(p-1)+1}  (\max\{R, \epsilon\})^{-1} \di R.
	\]
	Letting $\epsilon\to 0^+$, we see that
	\begin{equation}
		\label{eq:3.12}
		\frac{C}{p-1}m_\rho^{-(p-1)} \ge C^{-1} \int_{2\rho^2}^{T/2} R^{-\frac{Q}{2}(p-1)} \di R
	\end{equation}
	for all $\rho\in (0, \sqrt{T}/4)$.
	
	
	Let $p=p_{2,Q}$. Then the inequality \eqref{eq:3.12}  yields
	\[
	\mu (B(\zeta, (2/3)^\frac{1}{2}\rho))  = m_\rho  \le C \left[\log\frac{T}{4\rho^2}\right]^{-\frac{Q}{2}}
	\le C\left[\log\left(e+\frac{\sqrt{T}}{\rho}\right)\right]^{-\frac{Q}{2}}
	\]
	for all $\zeta\in\mathbb{H}^N$ and $\rho\in (0, \sqrt{T}/8)$.
	
	On the other hand, let $p\neq p_{2,Q}$.
	Setting $T= 32\rho^2$, then the inequality \eqref{eq:3.12}  yields
		\[
	\mu (B(\zeta, (2/3)^\frac{1}{2}\rho))  = m_\rho  \le C \rho^{Q-\frac{2}{p-1}}
	\]
	for all $\zeta\in\mathbb{H}^N$ and $\rho\in (0, \sqrt{T}/8)$.
	
	Setting $\sigma:= 8\rho$, 
	similarly to the proof in the case of $\alpha\in (0,2)$, we obtain
	\[
\sup_{\eta\in \mathbb{H}^N}\mu(B(\eta, \sigma)) \leq 
\left\{
\begin{array}{ll}
\displaystyle{C\left[\log \left( e+\frac{\sqrt{T}}{\sigma} \right)\right]^{-\f{Q}{2}}} & \mbox{if} \quad p=p_{2,Q},\vspace{3pt}\\
C\sigma^{Q-\frac{2}{p-1}}& \mbox{if} \quad p\neq p_{2,Q},\vspace{3pt}\\
\end{array}
\right.
\]
	for all $\sigma\in (0,\sqrt{T})$.
	Thus, assertions (i)--(iii) in Theorem~\ref{main thm} follow and the proof is complete.
\end{proof}

\begin{proof}[Proof of Corollary~{\rm \ref{cor 1}}]
	Let $u$ be a solution of problem \eqref{eq:Fujita} with \eqref{eq:Fujitaini} in $\HH^N\times[0,T)$, where $T \in(0,\infty)$. Since for a.a.~$\tau \in (0, T)$, $u_\tau (\eta,t) := u(\eta,t + \tau)$ is a solution of problem \eqref{eq:Fujita} 
	 with $\mu = u(\tau )$ in $\HH^N\times[0, T - \tau )$, the estimate \eqref{eq1-cor1} is just  a direct consequence of Theorem~\ref{main thm} with $\sigma = (T-\tau)^{1/\alpha}$.	
%
	This completes our proof.
\end{proof}

At the end of this section, we give a proof of Theorem~\ref{mainthm 2}.
\begin{proof}
	[Proof of Theorem~{\rm \ref{mainthm 2}}]
	Fix $\eta\in \HH^N$. Let $u$ be a solution of \eqref{eq:Fujita} in $\HH^N\times(0,T)$, where $T\in(0,\infty)$. Let $\alpha \in(0,2]$ and 
	$0 < t < T$. For each $n\ge 1$, we have 
	\[
	\HH^N\backslash B(\eta, 2^nt^\f{1}{\alpha}) = \bigcup_{i\ge 0}B(\eta,2^{n+i+1}t^\f{1}{\alpha})\backslash B(\eta, 2^{n+i}t^\f{1}{\alpha}).
	\]
	By the   covering lemma in Lemma~\ref{lem-covering lemma} a family of balls $\mathcal B:=\{B_{k,i}:=B(\eta_{k,i},t^{1/\alpha}): i \in \mathbb N, k\in J_i  \}$ for some index set $J_i$ such that
	\begin{enumerate}
		\item[\rm (i)] $\displaystyle{\HH^N\backslash B(\eta, 2^nt^\f{1}{\alpha})\subset \bigcup_{k,i}B_{k,i}}$;
		
		\item[\rm (ii)] $\displaystyle{\dist_\mathbb{H}(\eta_{k,i},\eta)\sim 2^{n+i}t^\f{1}{\alpha}}$ for each $k,i\in \mathbb N$;
		
		\item[\rm (iii)] $\sharp J_i\le  C2^{Q(n+i)}$ for each $i\in \mathbb N$ and some universal constant $C>0$.
	\end{enumerate}
	This, together with  Theorem~\ref{main thm} and \eqref{A1}, implies that 
\begin{equation}\label{eq-1st limit proof main thm 2}
		\begin{aligned}
			&\operatorname*{ess\,sup}_{0 < \tau < t/2} \int_{\HH^N\setminus B(\eta, 2^nt^\f{1}{\alpha})} G_\alpha(\zeta^{-1}\circ \eta,t-\tau) u(\zeta,\tau) \, \di\zeta \\
			&\leq  \sum_{i=1}^{\infty} \sum_{k\in J_i}\operatorname*{ess\,sup}_{0 < \tau < t/2} \int_{B_{k,i}} G_\alpha(\zeta^{-1}\circ \eta,t-\tau) u(\zeta,\tau) \, \di\zeta \\
			&	\leq C \operatorname*{ess\,sup}_{0 < \tau < t/2} \sup_{\xi\in \HH^N} \int_{B(\xi, t^\f{1}{\alpha})} u( \zeta,\tau) \, \di\zeta
			\sum_{i=1}^{\infty} \sum_{k\in J_i} \sup_{0 < \tau < t/2}  (t - \tau)^{-\f{Q}{\alpha}} g_\alpha\left(\f{\dist_\mathbb{H}(\eta_{k,i},\eta)}{c_2(t - \tau)^{{1/\alpha}}} \right)\\
			&\lesi t^{\f{Q}{\alpha} - \frac{1}{p-1}}\sum_{i=1}^{\infty} \sum_{k\in J_i} t^{-\f{Q}{\alpha}}2^{-(Q+\alpha)(n+i)}\\
			&\lesi t^{- \frac{1}{p-1}}  \sum_{i=1}^{\infty} 2^{-(n+i)\alpha}\simeq  t^{- \frac{1}{p-1}}2^{-n\alpha}\to 0\quad \mbox{as}\quad  n\to \vc.\\
		\end{aligned}
	\end{equation}
	Similarly,  we have
	\begin{equation}\label{eq-2nd limit proof main thm 2}
		\begin{aligned}
			&\int_{\HH^N\setminus B(\eta, 2^nt^\f{1}{\alpha})} G_\alpha(\zeta^{-1}\circ \eta,t)\, \di\mu(\zeta) \\
			&\leq C \sup_{\xi\in \HH^N} \mu(B(\xi, t^\f{1}{\alpha}))
			\sum_{i=1}^{\infty} \sum_{k\in J_i} \sup_{0 < \tau < t/2}  t^{-\f{Q}{\alpha}} g_\alpha\left(\f{\dist_\mathbb{H}(\eta_{k,i},\eta)}{c_2t^{{1/\alpha}}} \right)\\
			&\lesi t^{- \frac{1}{p-1}}2^{-n\alpha} \to 0 \quad \mbox{as} \quad n\to \vc.
		\end{aligned}
	\end{equation}
	From these two above estimates \eqref{eq-1st limit proof main thm 2} and \eqref{eq-2nd limit proof main thm 2},  Theorem~\ref{main thm} and Lemma~\ref{lem 2} we obtain
	\begin{equation}\label{eq1-proof main thm 2}
	\begin{split}
		&\int_{\HH^N} G_\alpha(\zeta^{-1}\circ \eta,t-\tau) u( \zeta,\tau) \, \di\zeta  < \infty \quad\mbox{and}\quad\int_{\HH^N} G_\alpha(\zeta^{-1}\circ \eta,t) \, \di\mu(\zeta)  < \infty,
	\end{split}
	\end{equation}
	for a.a.~$\tau \in (0, t/2)$.
	Let $\eta_n \in C_c(\HH^N)$ be such that
	$0 \leq \eta_n \leq 1$ in $\HH^N$, $\eta_n = 1$ on $B(\eta, 2^nt^{1/\alpha})$, $\eta_n = 0$ outside $B(\eta, 2^{n+1}t^{1/\alpha})$. 
	It follows from \eqref{eq1-proof main thm 2} that
	\begin{equation}\label{eq2-proof main thm 2}
		\begin{aligned}
			&\left| \int_{\HH^N} G_\alpha(\zeta^{-1}\circ \eta,t-\tau) u(\zeta,\tau) \, \di\zeta  - \int_{\HH^N} G_\alpha(\zeta^{-1}\circ \eta,t) \, \di\mu(\zeta)  \right|\\
			&\leq \biggl| \int_{\HH^N} G_\alpha(\zeta^{-1}\circ \eta,t) u( \zeta,\tau) \eta_n(y) \, \di\zeta  - \int_{\HH^N} G_\alpha(\zeta^{-1}\circ \eta,t) \eta_n(y) \, \di\mu(\zeta)  \biggr|\\
			& \quad
			+ \left| \int_{\HH^N} [G_\alpha(\zeta^{-1}\circ \eta,t-\tau) - G_\alpha(\zeta^{-1}\circ \eta,t)] u(\zeta,\tau) \eta_n(y)  \, \di\zeta  \right|\\
			& \quad + \int_{\HH^N\setminus B(\eta, 2^nt^\f{1}{\alpha})} G_\alpha(\zeta^{-1}\circ \eta,t-\tau) u(\zeta,\tau) \di\zeta\\
			& \quad  + \int_{\HH^N\setminus B(\eta, 2^nt^\f{1}{\alpha})} G_\alpha(\zeta^{-1}\circ \eta,t)\,\di\mu(\zeta) 
		\end{aligned}
	\end{equation}
	for $n = 1, 2, \ldots $ and a.a.~$\tau \in (0, t/2)$. By Lemma~\ref{lem 3} and the fact that $G_\alpha( \cdot,t)$ is continuous, we see that
	\begin{equation}\label{eq3-proof main thm 2}
	\begin{split}
		&\operatorname*{ess\,lim}_{\tau \to 0^+} \biggl[ \int_{\HH^N} G_\alpha(\zeta^{-1}\circ \eta,t) u( \zeta,\tau) \eta_n(y)  \, \di\zeta- \int_{\HH^N} G_\alpha(\zeta^{-1}\circ \eta,t) \eta_n(y) \, \di\mu(\zeta)  \biggr] = 0. 
		\end{split}
	\end{equation}
	Furthermore, by Lemmas~\ref{lem 2}and \ref{lem-time derivaitive of heat kernel} we have
	\begin{equation}\label{eq4-proof main thm 2}
		\begin{aligned}
			&\operatorname*{ess\,limsup}_{\tau \to 0^+} \left| \int_{\HH^N} [G_\alpha(\zeta^{-1}\circ \eta,t-\tau) - G_\alpha(\zeta^{-1}\circ \eta,t)] u(\tau, \zeta) \eta_n(y) \,\di\zeta  \right|\\
			&
			\leq \sup_{\zeta \in B(\eta, 2^{n+1}t^\f{1}{\alpha}), s \in (t/2, t)} |\partial_s G_\alpha(\zeta, s)| \operatorname*{ess\,limsup}_{\tau \to 0^+} \left[ \tau \int_{B(\eta, 2^{n+1}t^\f{1}{\alpha})} u(\zeta,\tau) \,\di\zeta  \right]\\
			& \lesi t^{-\f{Q}{\alpha}-1} \operatorname*{ess\,limsup}_{\tau \to 0^+} \left[ \tau \int_{B(\eta, 2^{n+1}t^\f{1}{\alpha})} u(\zeta,\tau) \, \di\zeta  \right]= 0.
		\end{aligned}
	\end{equation}
	From \eqref{eq2-proof main thm 2}, \eqref{eq3-proof main thm 2}, and \eqref{eq4-proof main thm 2},
	\begin{equation*}
	\begin{split}
	&\operatorname*{ess\,limsup}_{\tau \to 0^+} \left| \int_{\HH^N} G_\alpha(\zeta^{-1}\circ \eta,t-\tau) u(\zeta,\tau)  \, \di\zeta  - \int_{\HH^N} G_\alpha(\zeta^{-1}\circ \eta,t) \, \di\zeta  \right|\\
	&\leq \operatorname*{ess\,sup}_{0 < \tau < t/2} \int_{\HH^N\setminus B(\eta,2^nt^\f{1}{\alpha})} G_\alpha(\zeta^{-1}\circ \eta,t-\tau) u(\zeta,\tau)  \, \di\zeta + \int_{\HH^N\setminus B(\eta,2^nt^\f{1}{\alpha})} G_\alpha(\zeta^{-1}\circ \eta,t) \, \di\zeta 
	\end{split}
	\end{equation*}
	for $n = 1, 2, \ldots$. 
	This, in combination with \eqref{eq-1st limit proof main thm 2} and \eqref{eq-2nd limit proof main thm 2}, implies that
	\[
	\operatorname*{ess\,lim}_{\tau \to 0^+} \left| \int_{\HH^N} G_\alpha( \zeta^{-1}\circ \eta,t-\tau) u( \zeta,\tau)  \, \di\zeta  - \int_{\HH^N} G_\alpha(\zeta^{-1}\circ \eta,t) \, \di\zeta  \right| = 0.
	\]
	This and \eqref{eq- defn sols of parabolic eqs} yield that $u$ is a solution of problem \eqref{eq:Fujita} with \eqref{eq:Fujitaini} in $ \HH^N\times[0,T)$. 	
	This completes our proof.
\end{proof}

\section{Sufficient conditions for the solvability.}
In this section we shall prove Theorems~\ref{mainthm 3}--\ref{Theorem:SC3}. 
\begin{proof}
[Proof of Theorem~{\rm \ref{mainthm 3}}] 
It suffices to consider the case of $T=1$. Indeed, for any solution $u$ of problem \eqref{eq:Fujita} with \eqref{eq:Fujitaini} in $\mathbb{H}^N\times[0,T)$,
 we see from \eqref{eq:G3} that $u_\lambda (\eta,t) := \lambda^\frac{\alpha}{p-1} u(\delta_\lambda(\eta),\lambda^\alpha t)$ with $\lambda = T^{1/\alpha}$ is a solution of problem problem \eqref{eq:Fujita} with \eqref{eq:Fujitaini}  in $\mathbb{H}^N\times[0,1)$.
	Set $w(\eta,t) := [e^{t\Lambda_\alpha}\mu](\eta)$. Then it follows from  \eqref{eq:G4} and Lemma~\ref{lem 1} that
	\[
	\begin{aligned}
	\mathcal{F}[w](t) &:= e^{t\Lambda_\alpha}\mu+ \int_0^t e^{(t-s)\Lambda_\alpha}(2w(s))^p\, \di s\\
	& \leq w(t) + 2^p w(t) \int_0^t \|w(s)\|_{L^\infty(\HH^N)}^{p-1} \, \di s\\
&\leq w(t) + C w(t) \int_0^t s^{- \frac{N(p-1)}{\theta}}  [ \sup_{\eta\in \HH^N} \mu  ( B  ( \eta, s^\f{1}{\alpha} ) )  ]^{p-1}  \, \di s\\
&\leq w(t)  + C w(t)  [ \sup_{\eta\in \HH^N} \mu  ( B  ( \eta, t^\f{1}{\alpha} )  )  ]^{p-1}\int_0^t s^{- \frac{Q(p-1)}{\alpha}}  \, \di s\\
&\leq w(t) + C w(t) [ \sup_{\eta\in \HH^N} \mu  ( B  ( \eta, T^\f{1}{\alpha}  )  )  ]^{p-1}\int_0^T s^{- \frac{Q(p-1)}{\alpha}} \,  \di s\\
\end{aligned}
	\]
	This, together with the assumption of  Theorem~\ref{mainthm 3} and $1 < p < p_{\alpha,Q}$, implies that
	\[
	\mathcal{F}[w](t) \leq  [1 + C \gamma^{p-1}_C ]w(t) 
	\]
	for all $0 \leq t < 1$. Therefore, taking a sufficiently small $\gamma_C > 0$ if necessary, we obtain $\mathcal{F}[w](t) \leq 2w(t) $ for $0 \leq t < 1$. This means that $2w(t) $ is a supersolution of  problem \eqref{eq:Fujita} with \eqref{eq:Fujitaini} in $ \mathbb{H}^N\times[0,T)$. Then the theorem follows from Lemma~\ref{lem - supersolutions imply solution}.
\end{proof}

\begin{proof}[Proof of Theorem~{\rm \ref{mainthm 4}}]
For the same reasons as in the proof of  Theorem~\ref{mainthm 3},
it suffices to consider the case of $T=1$.
Assume \eqref{eq:SC2}. We can assume, without loss of generality, that  $1 < \theta < p$. 
Indeed, if $\theta\ge p$, then, for any $1<\theta'<p$, we apply Jensen's inequality to obtain
\begin{equation*}
\begin{split}
\left[\sup_{\eta\in \HH^N}\fint_{B(\eta,\sigma)} \mu(\eta)^{\theta'} \, \di\eta  \right]^{\frac{1}{\theta'}}
\le \sup_{\eta\in \HH^N} \left[\fint_{B(\eta,\sigma)} \mu(\eta)^{\theta} \, \di\eta  \right]^{\frac{1}{\theta}}
\le\gamma_D \sigma^{-\frac{\alpha}{p-1}}
\end{split}
\end{equation*}
for all $0<\sigma<T^{1/\alpha}$.
Thus, \eqref{eq:SC2} holds with $\theta$ replaced by $\theta'$.

	 Let $1 < \theta < p$ and set $w(\eta,t)  := [e^{t\Lambda_\alpha}f^\theta] (\eta)^{1/\theta}$. It follows from \eqref{eq:G4}, \eqref{eq:G5} and Jensen's inequality  that
	\[
	\begin{aligned}
	\mathcal{F}[w](t) &:= e^{t\Lambda_\alpha}f(s) + \int_0^t e^{(t-s)\Lambda_\alpha}(2w(s))^p \, \di s\\
	& \leq w(t)  + 2^p[e^{t\Lambda_\alpha}f^\theta] \int_0^t \|e^{t\Lambda_\alpha}f^\theta\|_{L^\infty(\HH^N)}^{\f{p}{\theta} - 1} \, \di s\\
	&\leq w(t)  + Cw(t) \|e^{t\Lambda_\alpha}f^\theta\|_{L^\infty(\HH^N)}^{1 - \f{1}{\theta}} \int_0^t \|e^{t\Lambda_\alpha}f^\theta\|_{L^\infty(\HH^N)}^{\f{p}{\theta} - 1} \, \di s
	\end{aligned}
	\]
	for all $0 \leq t < 1$. Furthermore, by Lemma~\ref{lem 1} and  \eqref{eq:SC2}  we have
	\[
	\|e^{t\Lambda_\alpha}f^\theta\|_{L^\infty(\HH^N)} \leq C\gamma_D^\theta t^{-\f{\theta}{p-1}}.
	\]
	This implies that
	\[
	\mathcal{F}[w](t) \leq  [1 + C\gamma_D^{p-1}]w(t), \quad 0 \leq t < 1.
	\]
	Therefore, taking a sufficiently small $\gamma_D > 0$ if necessary, we obtain $\mathcal{F}[w](t) \leq 2w(t)$ for $0 \leq t < 1$. At this stage, arguing similarly to the proof of Theorem~\ref{mainthm 3} we complete the proof of Theorem~\ref{mainthm 4}.
\end{proof}

 \begin{proof}[Proof of Theorem~{\rm \ref{Theorem:SC3}}]
 For the same reasons as in the proof of  Theorem~\ref{mainthm 3},
it suffices to consider the case of $T=1$.	
 	Let $\beta>0$ and $\rho = \rho(s)$ be as in \eqref{eq:SC31}. Let $L\ge e$ be such that
 	\begin{itemize}
 		\item[(a)] $\Psi_{\beta,L}(s) := s [\log(L+s)]^\beta$ is positive and convex in $(0,\infty)$;
 		\item[(b)] $s^p/\Psi_{\beta,L}$ and $\Psi_{\beta,L}/s$ are monotone increasing in $(0,\infty)$.
 	\end{itemize}
 	Let $\mu$ be a nonnegative measurable function in $\mathbb{H}^N$ satisfying \eqref{eq:SC32}.
 	Simce $\Psi_\beta(\tau) \simeq \Psi_{\beta,L}$ for $\tau>0$, it follows that
 	\begin{equation}
 		\label{eq:5.1}
 		\Psi_{\beta,L}^{-1}\left[ \fint_{B(\zeta,\sigma)} \Psi_{\beta,L}( \mu(\eta)) \, \di\eta \right] \le C \gamma_E \rho(\sigma) 
 	\end{equation}
 	for all $\eta\in\mathbb{H}^N$ and $\sigma\in(0,1)$.
 	Set 
 	\[
 	w(\eta,t) :=  \Psi_{\beta,L}^{-1}[e^{t\Lambda_\alpha} \Psi_{\beta,L}( \mu(\eta))  ].
 	\]
 	By \eqref{eq:5.1} we apply Lemma~\ref{lem 1} to obtain
 	\begin{equation*}
 		\begin{split}
 			\|e^{t\Lambda_\alpha} \Psi_{\beta,L}(\mu)\|_{L^\infty(\mathbb{H}^N)} 
 			&= \|\Psi_{\beta,L}(w(t))\|_{L^\infty(\mathbb{H}^N)} \\
 			&\le C\Psi_{\beta,L}(C\gamma_E \rho(t^\f{1}{\alpha}))\\
 			& \le C\Psi_{\beta,L}(\gamma_E \rho(t^\f{1}{\alpha})), 
 		\end{split}
 	\end{equation*}
 	which implies that
 	\begin{equation}
 		\label{eq:5.2}
 		\|w(t)\|_{L^\infty(\mathbb{H}^N)} \le \Psi_{\beta,L}^{-1} [C \Psi_{\beta,L}(\gamma_E \rho (t^\f{1}{\alpha}))]
 	\end{equation}
 	for $t\in (0,1)$.
 	Define 
 	\[
 	\mathcal{F}[w](t) := e^{t\Lambda_\alpha}\mu + \int_0^t e^{(t-s)\Lambda_\alpha} (2w(s))^p \, \di s, \quad t>0.
 	\]
 	Then  by \eqref{eq:G4}, \eqref{eq:G5}, and Jensen's inequality,
 	\begin{equation}
 		\label{eq:5.3}
 		\begin{split}
 			\mathcal{F}[w](t)
 			&\le w(t) + 2^p \int_0^t e^{(t-s)\Lambda_\alpha} \left[\frac{w(s)^p}{e^{s\Lambda_\alpha}\Psi_{\beta,L}(\mu)} e^{s\Lambda_\alpha}\Psi_{\beta,L}(\mu)\right] \, \di s\\
 			& \le w(t) + 2^p  \left[\int_0^t \left\| \frac{w(s)^p}{e^{s\Lambda_\alpha}\Psi_{\beta,L}(\mu)}\right\|_{L^\infty(\mathbb{H}^N)} \, \di s  \right] e^{t\Lambda_\alpha}\Psi_{\beta,L}(\mu)\\
 			& \le w(t) + 2^p  \left[\int_0^t \left\| \frac{w(s)^p}{\Psi_{\beta,L}(w(s))}\right\|_{L^\infty(\mathbb{H}^N)} \, \di s  \right] \left\|\frac{\Psi_{\beta,L}(w(t))}{w(t)}\right\|_{L^\infty(\mathbb{H}^N)} w(t)\\
 		\end{split}
 	\end{equation}
 	for $t>0$. On the other hand, by property (b) and \eqref{eq:5.2} we see that
 	\begin{equation}
 		\label{eq:5.4}
		\begin{split}
 		\left\| \frac{w(s)^p}{\Psi_{\beta,L}(w(s))}\right\|_{L^\infty(\mathbb{H}^N)} 
 		&\le  \frac{\|w(s)\|_{L^\infty(\mathbb{H}^N)}^p}{\Psi_{\beta,L}(\|w(s)\|_{L^\infty(\mathbb{H}^N)})}\le  \frac{[\Psi_{\beta,L}^{-1} [C \Psi_{\beta,L}(\gamma_E \rho (t^\f{1}{\alpha}))]]^p}{C\Psi_{\beta,L} (\gamma_E\rho(s^\f{1}{\alpha}))}
		\end{split}
 	\end{equation}
 	for $s\in (0,1)$.
 	By \eqref{eq:SC31} we have
 	\[
 	\Psi_{\beta,L} (\gamma_E\rho(s^\f{1}{\alpha})) = \gamma_E\rho(s^\f{1}{\alpha}) [\log(L + \gamma_E\rho(s^\f{1}{\alpha}))]^\beta \simeq \gamma_E s^{-\frac{Q}{\alpha}} \left[\log\left(e+ \frac{1}{s}\right)\right]^{-\frac{Q}{\alpha}+\beta}
 	\]
 	for all $s\in (0,1)$.
 	Since $\Psi_{\beta,L}^{-1} (\tau) \simeq \tau [\log(e+\tau)]^{-\beta}$ for all $\tau>0$, it follows that
 	\[
 	\Psi_{\beta,L}^{-1} (C \Psi_{\beta,L} (\gamma_E \rho(t^\f{1}{\alpha}))) \simeq \gamma s^{-\frac{Q}{\alpha}} \left[\log\left(e+\frac{1}{s}\right)\right]^{-\frac{Q}{\alpha}}
 	\]
 	for all $s\in (0,1)$.
 	These together with \eqref{eq:5.4} imply that 
 	\begin{equation}
 		\label{eq:5.5}
 		\left\| \frac{w(s)^p}{\Psi_{\beta,L}(w(s))}\right\|_{L^\infty(\mathbb{H}^N)}  \le C\gamma_E^\frac{\alpha}{Q} s^{-1} \left[\log\left(e+\frac{1}{s}\right)\right]^{-1-\beta}
 	\end{equation}
 	for all $s\in (0,1)$.
 	Similarly, by \eqref{eq:5.2} and property (b) we have
 	\begin{equation}
 		\label{eq:5.6}
 		\left\|\frac{\Psi_{\beta,L}(w(t))}{w(t)}\right\|_{L^\infty(\mathbb{H}^N)} 
 		\le \frac{C\Psi_{\beta,L}(\gamma_E \rho(t^\f{1}{\alpha}))}{\Psi_{\beta,L}^{-1}(C \Psi_{\beta,L}(\gamma_E \rho(t^\f{1}{\alpha})))} \le C \left[\log\left(e+\frac{1}{t}\right)\right]^\beta
 	\end{equation}
 	for all $t\in(0,1)$.
 	By \eqref{eq:5.5} and \eqref{eq:5.6} we obtain
 	\begin{equation}
 		\label{eq:5.7}
 		\begin{split}
 			&\left[\int_0^t \left\| \frac{w(s)^p}{\Psi_{\beta,L}(w(s))}\right\|_{L^\infty(\mathbb{H}^N)} \, \di s  \right] 
 			\left\|\frac{\Psi_{\beta,L}(w(t))}{w(t)}\right\|_{L^\infty(\mathbb{H}^N)} \\
 			&\le C\gamma_E^\frac{\alpha}{Q} \left[\log\left(e+\frac{1}{t}\right)\right]^\beta \int_0^t s^{-1}\left[\log\left(e+\frac{1}{s}\right)\right]^{-1-\beta}\, \di s \le C\gamma_E^\frac{\alpha}{Q}
 		\end{split}
 	\end{equation}
 	for all $t\in(0,1)$.
 	Therefore, taking a sufficiently small $\gamma_E > 0$ if necessary, we deduce from \eqref{eq:5.3} and \eqref{eq:5.7} that $\mathcal{F}[w](t) \leq 2w(t)$ for $t\in (0,1)$. At this stage, arguing similarly to the proof of Theorem~\ref{mainthm 4} we complete the proof of Theorem~\ref{Theorem:SC3}.
 \end{proof}
 
 Finally, we give a proof of Corollary~\ref{Cor:1.2}.
 \begin{proof}[Proof of Corollary~{\rm \ref{Cor:1.2}}]
 Assume that $\mu= \gamma \Phi_\alpha + C_\alpha$ for some $\gamma>0$ and $C_\alpha\ge 0$.
 Let  $p>p_{\alpha, Q}$ and   fix $1<\theta<p$ so  that it satisfies
 \[
 \frac{\alpha \theta}{p-1} < Q.
 \]
  It follows from \eqref{eq:6.1} and \eqref{eq:6.2} that
  \begin{equation*}
  \begin{split}
  \sup_{\zeta\in\mathbb{H}^N} \left[\fint_{B(\zeta,\sigma)} \mu(\eta) \, \di \eta\right]^{\frac{1}{\theta}}
  = \left[\fint_{B(0,\sigma)} \mu(\eta) \, \di \eta\right]^{\frac{1}{\theta}}\lesi \gamma \sigma^{-\frac{\alpha}{p-1}} + C_\alpha
 \end{split}
  \end{equation*}
  for $\sigma>0$.
  Then taking   sufficiently small $\gamma>0$ and $T>0$ if necessary, we see that \eqref{eq:SC2} holds for all $0<\sigma<T^{1/\alpha}$. This implies that problem \eqref{eq:Fujita} with \eqref{eq:Fujitaini} possesses a solution in $\mathbb{H}^N\times[0,T)$.
  If $C_\alpha=0$, we can take $T=\infty$. 
 Let $p=p_{\alpha,Q}$ and $\beta>0$. By similar calculations, we see that
 \begin{equation*}
 \begin{split}
		\sup_{\zeta \in \mathbb{H}^N}  \Psi_\beta^{-1}\left[ \fint_{B(\zeta,\sigma)} \Psi_\beta(T^\frac{1}{p-1} \mu(\eta)) \, \di\eta \right]  
		&= \Psi_\beta^{-1}\left[ \fint_{B(0,\sigma)} \Psi_\beta(T^\frac{1}{p-1} \mu(\eta)) \, \di\eta \right] \\
		&\lesi \gamma \rho(\sigma T^{-\frac{1}{\alpha}}) +C_\alpha T^\frac{1}{p-1}
\end{split}
\end{equation*}
for all $\sigma>0$, where $\Psi_\beta$ and $\rho$ is as in \eqref{eq:SC31}.
  Then taking   sufficiently small $\gamma>0$ and $T>0$ if necessary, we see that \eqref{eq:SC32} holds for all $0<\sigma<T^{1/\alpha}$. This implies that problem \eqref{eq:Fujita} with \eqref{eq:Fujitaini} possesses a solution in $\mathbb{H}^N\times[0,T)$.
 
 On the other hand, it follows from \eqref{eq:6.1} and \eqref{eq:6.2} that
 \begin{equation*}
 \begin{split}
 \int_{B(0,\sigma)} \mu(\zeta) \, \di \zeta \gtrsim 
 \left\{
	\begin{aligned}
	& \gamma \left[\log(e+\sigma^{-1})\right]^{-\frac{Q}{\alpha}} + C_\alpha^Q \quad && \mbox{if} \quad p=p_{\alpha,Q}, \\
	& \gamma \sigma^{Q-\frac{\alpha}{p-1}} +C_\alpha^Q\quad && \mbox{if} \quad p>p_{\alpha,Q}.\\	
	\end{aligned}
	\right.
 \end{split}
 \end{equation*}
 Then we see that if $\gamma>0$ is sufficiently large, assertions (ii) and (iii) in Theorem~\ref{main thm} do not hold for all $\sigma>0$.
 This implies that problem \eqref{eq:Fujita} with \eqref{eq:Fujitaini} possesses no local-in-time solutions.
 Thus, the proof is complete.
 \end{proof}
 
\section{Application.}

Since the minimal solution is unique, we can define the life span $T(\mu)$ as the maximal existence time  of the minimal solution of problem \eqref{eq:Fujita} with \eqref{eq:Fujitaini}.

For \eqref{eq:fjt} and in the case of $\alpha=2$, Lee--Ni \cite{LN92} obtained sharp estimates of $T(\lambda\phi)$  as $\lambda\to0^+$ by use of the behavior of $\phi$ at the space infinity. 
Subsequently,  the second author of this paper and Ishige \cite{HI18} obtained a generalization to the case of $\alpha\in (0,2]$.
Recently, Georgiev--Palmieri \cite{GP21} obtained a generalization of \cite{LN92} to the Heisenberg group $\mathbb{H}^N$ in the case of $\alpha=2$.
In some cases, however, sharp estimates have not yet been obtained.

In this section, as an application of our theorems, we show that similar estimates of $T(\lambda\phi)$ as in \cites{LN92, HI18} in the Heisenberg group $\mathbb{H}^N$. 
Theorems~\ref{Theorem F} and \ref{Theorem G} are generalizations of \cite[Theorem~3.15 and Theorem~3.21]{LN92}, respectively.
At the end of this section, summaries of these theorems and previous study \cite{GP21} are given.

\begin{thm}
\label{Theorem F}
Let $N\ge1$ and $p>1$.
Let $A>0$ and $\phi$ be a nonnegative measurable function in $\mathbb{H}^N$ such that
\begin{equation*}
(1+ |\eta|_{\mathbb{H}^N})^{-A} \le \phi(\eta)  \in L^\infty(\mathbb{H}^N)
\end{equation*}
for a.a.~$\eta\in \mathbb{H}^N$.
\begin{itemize}
\item[(i)] Let $p=p_{\alpha, Q}$ and $A\ge \alpha /(p-1) =Q$. Then there exists a  constant $C>0$ such that
\[
\log T(\lambda\phi) \le \left\{
	\begin{aligned}
	&C \lambda^{-(p-1)}\quad && \mbox{if} \quad A>Q, \\
	&C \lambda^{-\frac{p-1}{p}}\quad && \mbox{if} \quad A=Q, \\	
	\end{aligned}
	\right.
\]
for all sufficiently small $\lambda>0$.
\item[(ii)] Let $1<p<p_{\alpha, Q}$ or $A<\alpha/(p-1)$. Then there exists a  constant $C'>0$ such that
\[
T(\lambda\phi) \le \left\{
	\begin{aligned}
	&C' \lambda^{-\left( \frac{1}{p-1} -\frac{1}{\alpha} \min\{A,Q\}\right)^{-1}}\quad && \mbox{if} \quad A\neq Q, \\
	&C' \left( \frac{\lambda^{-1}}{\log(\lambda^{-1})}\right)^{\left(\frac{1}{p-1}-\frac{Q}{\alpha}\right)^{-1}}\quad && \mbox{if} \quad A=Q, \\	
	\end{aligned}
	\right.
\]
for all sufficiently small $\lambda>0$.
\end{itemize}
\end{thm}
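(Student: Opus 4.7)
The plan is to treat $T(\lambda\phi)$ as the supremum of times $T$ for which problem \eqref{eq:Fujita} with $\mu = \lambda\phi$ admits a solution on $\HH^N\times[0,T)$, and to apply Theorem~\ref{main thm} contrapositively: any such $T$ must satisfy the ball-mass bounds (i)--(iii) with $\mu = \lambda\phi$. The proof then reduces, in each regime of $(p,A)$, to finding the smallest $T = T(\lambda)$ beyond which those bounds cannot hold. The key lower bound I will exploit is obtained from \eqref{eq:6.1} together with $\phi(\eta)\ge(1+|\eta|_{\HH^N})^{-A}$: for $\sigma\ge 1$,
\[
\int_{B(0,\sigma)} \phi(\zeta)\,\di\zeta \simeq \int_0^{\sigma} (1+s)^{-A} s^{Q-1}\,\di s \gtrsim
\begin{cases}
1 & \text{if } A > Q,\\
\log \sigma & \text{if } A = Q,\\
\sigma^{Q-A} & \text{if } A < Q,
\end{cases}
\]
where $Q = 2N+2$.

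For part (ii), I combine this lower bound with the equivalent reformulation (cf.\ the remark following Theorem~\ref{main thm})
\[
\lambda \int_{B(0,\sigma)} \phi(\zeta)\,\di\zeta \le \gamma_A\, \sigma^{Q-\alpha/(p-1)}, \qquad 0<\sigma<T^{1/\alpha},
\]
which is valid in either sub-regime $1<p<p_{\alpha,Q}$ or $p>p_{\alpha,Q}$ with $A < \alpha/(p-1)$. In every sub-case the net exponent after inserting the radial lower bound is negative, so the tightest constraint is obtained at $\sigma = T^{1/\alpha}$. Solving for $T$ yields $T\lesi\lambda^{-(1/(p-1)-\min\{A,Q\}/\alpha)^{-1}}$ when $A\ne Q$, while the borderline case $A=Q$ (which forces $1<p<p_{\alpha,Q}$) gives $T^{1/(p-1)-Q/\alpha}\log T \lesi \lambda^{-1}$; inverting this relation with the ansatz $T = c(\lambda^{-1}/\log\lambda^{-1})^\nu$ with $\nu = (1/(p-1)-Q/\alpha)^{-1}$ yields the claimed logarithmic correction.

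For part (i) with $p=p_{\alpha,Q}$, the relevant necessary condition is
\[
\lambda\int_{B(0,\sigma)} \phi(\zeta)\,\di\zeta \le \gamma_A \left[\log\left(e+\frac{T^{1/\alpha}}{\sigma}\right)\right]^{-Q/\alpha}, \qquad 0<\sigma<T^{1/\alpha}.
\]
If $A>Q$, taking $\sigma = 1$ gives $\lambda\lesi(\log T^{1/\alpha})^{-Q/\alpha}$, hence $\log T \lesi \lambda^{-\alpha/Q} = \lambda^{-(p-1)}$. The critical case $A=Q$ is the main obstacle and requires an optimization in $\sigma$: setting $y = \log\sigma$ and $L = \log T^{1/\alpha}$ reduces the inequality to $\lambda y(L-y)^{Q/\alpha}\lesi 1$, and elementary calculus locates the maximum of the left-hand side over $y\in(0,L)$ at $y^* = L/(1+Q/\alpha)$, with value comparable to $\lambda L^{1+Q/\alpha}$. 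Therefore $L\lesi\lambda^{-1/(1+Q/\alpha)}$, and since $1+Q/\alpha = p/(p-1)$ at the Fujita exponent this translates into $\log T\lesi \lambda^{-(p-1)/p}$, as required. Apart from this optimization step in the critical case, the remainder is a direct bookkeeping exercise combining Theorem~\ref{main thm} with the radial integral displayed above.
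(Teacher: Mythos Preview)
Your approach is essentially the same as the paper's: apply Theorem~\ref{main thm} contrapositively and combine it with the radial lower bound \eqref{eq:6.1} on $\int_{B(0,\sigma)}\phi$. Two remarks are worth making.

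First, there is a small gap in your treatment of part~(ii): the hypothesis ``$1<p<p_{\alpha,Q}$ or $A<\alpha/(p-1)$'' includes the sub-case $p=p_{\alpha,Q}$ with $A<Q$, which your reformulation $\lambda\int_{B(0,\sigma)}\phi\le\gamma_A\sigma^{Q-\alpha/(p-1)}$ explicitly excludes. The paper handles this by evaluating assertion~(ii) of Theorem~\ref{main thm} at $\sigma=T_\lambda^{1/\alpha}$, where the logarithm becomes a constant, giving $\lambda T_\lambda^{(Q-A)/\alpha}\lesssim 1$ and hence the claimed bound.

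Second, for part~(i) the paper avoids your optimization entirely: it simply evaluates the necessary condition at the single scale $\sigma=T_\lambda^{1/(2\alpha)}$, which handles both $A>Q$ and $A=Q$ at once. With this choice, $\log\bigl(e+T_\lambda^{1/\alpha}/\sigma\bigr)\sim\log T_\lambda$ while the radial lower bound gives $\lambda$ (if $A>Q$) or $\lambda\log T_\lambda$ (if $A=Q$), yielding directly $\lambda\lesssim(\log T_\lambda)^{-Q/\alpha}$ and $\lambda\lesssim(\log T_\lambda)^{-Q/\alpha-1}$ respectively. Your optimization recovers essentially the same scale (any $\sigma=T_\lambda^{\theta/\alpha}$ with $\theta\in(0,1)$ would do), so the extra work buys nothing beyond confirming that the paper's choice is near-optimal.
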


For the simplicity of notation, we denote $T_\lambda := T(\lambda \phi)$.
We give a proof of Theorem~\ref{Theorem F}.

\begin{proof}[Proof of Theorem~{\rm \ref{Theorem F}}]
Since $\phi\in L^\infty(\mathbb{H}^N)$, by Theorem~\ref{mainthm 4} we have
\[
T_\lambda \gtrsim   \lambda^{-(p-1)}
\]
for all sufficiently small $\lambda>0$.
This implies that $\lim_{\lambda\to 0^+} T_\lambda = \infty$.
So, we can assume without loss of generality that  $T_\lambda>0$ is sufficiently large.

We apply Theorem~\ref{main thm} to prove Theorem~\ref{Theorem F}
and
assume that problem \eqref{eq:Fujita} with $\mu = \lambda \phi$ possesses a solution in $\mathbb{H}^N\times[0,T_\lambda)$.
For any $p>1$, we see that
\begin{equation}
\label{eq:6.3}
\begin{split}
\int_{B(0,\sigma)} \lambda \phi(\eta) \, \di\eta
& \ge \lambda \int_{B(0,\sigma)} (1+ |\eta|_{\mathbb{H}^N})^{-A} \, \di \eta\\
&\gtrsim  \left\{
	\begin{aligned}
	&\lambda \quad && \mbox{if} \quad \sigma>1,  A> Q, \\
	&\lambda \log (e+ \sigma) \quad && \mbox{if} \quad \sigma>1, A=Q, \\
	&\lambda  \sigma^{Q-A}\quad && \mbox{if} \quad \sigma>1,  0<A<Q, \\	
	\end{aligned}
	\right.
\end{split}
\end{equation}
for all $\sigma>1$ and sufficiently small $\lambda>0$.
In the case of $p=p_{\alpha,Q}$, it follows from assertion (ii) in Theorem~\ref{main thm} that
\[
\int_{B(0,\sigma)} \lambda \phi(\eta) \, \di\eta \le \gamma_A\left[\log\left(e+ \frac{T_\lambda^{1/\alpha}}{\sigma}\right)\right]^{-\frac{Q}{\alpha}}
\]
for all $0<\sigma < T_\lambda^{1/\alpha}$ and sufficiently small $\lambda>0$.
This implies that
\begin{equation}
\label{eq:6.4}
\int_{B(0,T_\lambda^\frac{1}{2\alpha})} \lambda \phi(\eta) \, \di\eta \lesi  
\gamma_A [\log T_\lambda]^{-\frac{Q}{\alpha}},
\end{equation}
\begin{equation}
\label{eq:6.44}
\int_{B(0,T_\lambda^\frac{1}{\alpha})} \lambda \phi(\eta) \, \di\eta \lesi  
\gamma_A,
\end{equation}
for all sufficiently small $\lambda>0$.
By \eqref{eq:6.3} and \eqref{eq:6.4} with $\sigma = T_\lambda^{1/2\alpha}$ we obtain
assertion (i).
Furthermore, by \eqref{eq:6.3} and \eqref{eq:6.44} with $\sigma = T_\lambda^{1/\alpha}$ we obtain assertion (ii)
in the case where $p=p_{\alpha,Q}$ and $A<1/(p-1)$.

We prove assertion (ii) in the case of $1<p<p_{\alpha,Q}$.
By assertion (i) in Theorem~\ref{main thm} we see that
\begin{equation}
\label{eq:6.444}
\int_{B(0,T_\lambda^\f{1}{\alpha})} \lambda \phi (\eta) \, \di \eta \le \gamma_A T_\lambda^{\frac{Q}{\alpha} - \frac{1}{p-1}}.
\end{equation}
By \eqref{eq:6.3} and  \eqref{eq:6.444}, we obtain assertion (ii) in the case of $1<p<p_{\alpha,Q}$. Similarly, we obtain assertion (ii) in the case of $p>p_{\alpha,Q}$. Thus, the proof is complete.
\end{proof}

\begin{thm}
\label{Theorem G}
Let $N\ge1$ and $p>1$.
Let $A>0$ and $\phi$ be a nonnegative measurable function in $\mathbb{H}^N$ such that
\begin{equation*}
0\le \phi(\eta) \le (1+ |\eta|_{\mathbb{H}^N})^{-A}
\end{equation*}
for a.a.~$\eta\in \mathbb{H}^N$.
\begin{itemize}
\item[(i)] Let $p=p_{\alpha, Q}$ and $A\ge \alpha /(p-1) =Q$. Then there exists a  constant $C>0$ such that
\[
\log T(\lambda\phi) \ge \left\{
	\begin{aligned}
	&C \lambda^{-(p-1)}\quad && \mbox{if} \quad A>Q, \\
	&C \lambda^{-\frac{p-1}{p}}\quad && \mbox{if} \quad A=Q, \\	
	\end{aligned}
	\right.
\]
for all sufficiently small $\lambda>0$.
\item[(ii)] Let $1<p<p_{\alpha, Q}$ or $A<\alpha/(p-1)$. Then there exists a positive constant $C'>0$ such that
\[
T(\lambda\phi) \ge \left\{
	\begin{aligned}
	&C' \lambda^{-\left( \frac{1}{p-1} -\frac{1}{\alpha} \min\{A,Q\}\right)^{-1}}\quad && \mbox{if} \quad A\neq Q, \\
	&C' \left( \frac{\lambda^{-1}}{\log(\lambda^{-1})}\right)^{\left(\frac{1}{p-1}-\frac{Q}{\alpha}\right)^{-1}}\quad && \mbox{if} \quad A=Q, \\	
	\end{aligned}
	\right.
\]
for all sufficiently small $\lambda>0$.
\end{itemize}
\end{thm}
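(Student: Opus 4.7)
The strategy is to apply the sufficient-condition theorems (Theorems~\ref{mainthm 3}, \ref{mainthm 4}, and \ref{Theorem:SC3}) to the initial datum $\lambda\phi$. Since $T(\lambda\phi)$ is, by definition, the supremum of times over which a solution to problem \eqref{eq:Fujita} with \eqref{eq:Fujitaini} exists, verifying the hypothesis of any one of those theorems at some $T$ produces $T(\lambda\phi)\ge T$. The three statements of Theorem~\ref{Theorem G} will correspond to three different choices of sufficient-condition theorem, matched to the ranges of $p$ and $A$.

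The first preliminary step is to compute the averages $\fint_{B(\zeta,\sigma)}(1+|\eta|_{\mathbb{H}^N})^{-B}\,\di\eta$ for $B>0$ using the polar-coordinate identity \eqref{eq:6.1}. Because $\phi$ is radially controlled by a decreasing function of $|\eta|_{\mathbb{H}^N}$, the supremum over $\zeta$ is comparable, up to an absolute constant, to the value at $\zeta=0$, so this reduces to a one-dimensional integral $\int_0^\sigma (1+u)^{-B}u^{Q-1}\,\di u$ that bifurcates into three regimes according to whether $B>Q$, $B=Q$, or $B<Q$ (bounded, logarithmic, polynomial growth, respectively). These three regimes will be the source of the three cases in the claim.

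For Case (ii) with $1<p<p_{\alpha,Q}$, I would apply Theorem~\ref{mainthm 3}: the hypothesis reduces to $\lambda\,h(T^{1/\alpha})\le \gamma_C T^{Q/\alpha-1/(p-1)}$, where $h$ is taken from the three regimes above with $B=A$. Solving for the largest admissible $T$ in each regime produces exactly the stated lower bound, with the logarithmic loss in the $A=Q$ case coming precisely from the $\log\sigma$ factor in the ball-average. For Case (ii) with $p\ge p_{\alpha,Q}$ and $A<\alpha/(p-1)$, Theorem~\ref{mainthm 3} is not applicable, so I would instead apply Theorem~\ref{mainthm 4} with some $\theta>1$ chosen close enough to $1$ that $A\theta<Q$ still holds (possible since $A<\alpha/(p-1)\le Q$). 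The hypothesis then becomes $\lambda\sigma^{-A}\lesssim \sigma^{-\alpha/(p-1)}$ for $0<\sigma<T^{1/\alpha}$; since $A<\alpha/(p-1)$ makes the condition strongest at $\sigma=T^{1/\alpha}$, solving yields the bound $T\gtrsim \lambda^{-(1/(p-1)-A/\alpha)^{-1}}$.

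For Case (i) with $p=p_{\alpha,Q}$ and $A\ge Q$, I would apply Theorem~\ref{Theorem:SC3}. Writing $a:=T^{1/(p-1)}\lambda=T^{Q/\alpha}\lambda$ and using $\Psi_\beta(s)\simeq s$ for $s\le 1$ and $\Psi_\beta(s)\simeq s[\log s]^\beta$ for $s\ge 2$, with $\Psi_\beta^{-1}(t)\simeq t[\log t]^{-\beta}$ for $t\ge 2$, I would split the averaging integral $\fint_{B(0,\sigma)}\Psi_\beta(a\phi)$ according to whether $a\phi(\eta)\ge 1$ or $<1$ (i.e.\ $|\eta|_{\mathbb{H}^N}\lessgtr a^{1/A}$), and then check the resulting estimate against $\rho(\sigma T^{-1/\alpha})$. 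This is the main obstacle: one has to pick the right $\beta$ (I would try $\beta=Q/\alpha$ for $A>Q$ and slightly larger for $A=Q$) and track the interaction between the logarithmic growth of $\Psi_\beta$ and the logarithmic decay of $\rho$ so that \eqref{eq:SC32} holds uniformly for $\sigma\in(0,T^{1/\alpha})$ at the claimed value of $T$, namely $T=\exp(c\lambda^{-(p-1)})$ when $A>Q$ and $T=\exp(c\lambda^{-(p-1)/p})$ when $A=Q$; the extra logarithmic loss in the second subcase will come from the boundary regime where $|\eta|_{\mathbb{H}^N}\sim a^{1/A}$, which accounts for the mismatch between the two rates.
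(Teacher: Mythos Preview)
Your overall strategy matches the paper's exactly: Theorem~\ref{mainthm 3} for $1<p<p_{\alpha,Q}$, Theorem~\ref{mainthm 4} (with $\theta>1$ close enough to $1$ that $A\theta<Q$) for $p\ge p_{\alpha,Q}$ with $A<\alpha/(p-1)$, and Theorem~\ref{Theorem:SC3} for $p=p_{\alpha,Q}$ with $A\ge Q$. Your treatment of both subcases of (ii) is correct and is precisely what the paper does.

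For Case (i), however, the paper's route is considerably simpler than the splitting you outline, and it uses the \emph{same} exponent $\beta=Q/\alpha$ in both subcases $A>Q$ and $A=Q$; no larger $\beta$ is needed. The observation you are missing is that since $0\le\phi\le 1$, one has the uniform bound
\[
\log\!\bigl(L+\overline{T}_\lambda^{Q/\alpha}\lambda\,\phi(\eta)\bigr)\le \log\!\bigl(L+\overline{T}_\lambda^{Q/\alpha}\lambda\bigr)\lesssim \log\overline{T}_\lambda,
\]
so $\Psi_{\beta,L}\bigl(\overline{T}_\lambda^{Q/\alpha}\lambda\phi\bigr)\lesssim \overline{T}_\lambda^{Q/\alpha}\lambda\,\phi\cdot(\log\overline{T}_\lambda)^{Q/\alpha}$. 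This pulls the logarithmic factor outside the integral entirely, leaving only $\sigma^{-Q}\int_{B(0,\sigma)}(1+|\eta|_{\mathbb{H}^N})^{-A}\,\di\eta$, which is $\lesssim\sigma^{-Q}$ when $A>Q$ and $\lesssim\sigma^{-Q}\log\overline{T}_\lambda$ when $A=Q$. With $\overline{T}_\lambda=\exp(\epsilon\lambda^{-(p-1)})$ in the first subcase and $\overline{T}_\lambda=\exp(\epsilon\lambda^{-(p-1)/p})$ in the second, one computes directly that $\lambda(\log\overline{T}_\lambda)^{Q/\alpha}=\epsilon^{Q/\alpha}$, respectively $\lambda(\log\overline{T}_\lambda)^{Q/\alpha+1}=\epsilon^{(Q+\alpha)/\alpha}$, so the average of $\Psi_{\beta,L}$ is bounded by $C\epsilon^{Q/\alpha}\overline{T}_\lambda^{Q/\alpha}\sigma^{-Q}$ in both subcases. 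Applying $\Psi_{\beta,L}^{-1}(t)\simeq t[\log(e+t)]^{-Q/\alpha}$ then yields exactly $C\epsilon^{Q/\alpha}\rho(\sigma\overline{T}_\lambda^{-1/\alpha})$, and taking $\epsilon$ small verifies \eqref{eq:SC32}. No decomposition into $\{a\phi\ge 1\}$ versus $\{a\phi<1\}$ is needed, and the extra logarithmic loss for $A=Q$ arises not from any boundary regime but simply from the logarithmic divergence of $\int_0^\sigma(1+u)^{-Q}u^{Q-1}\,\di u$.
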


\begin{proof}
We apply Theorem~\ref{Theorem:SC3} to prove assertion (i).
Let $p=p_{\alpha, Q}$ and set 
\begin{equation*}
		\Psi(s) := s[\log(e+s)]^\frac{Q}{\alpha}, \qquad
		\rho(s) := s^{-Q} \left[\log\left(e+\frac{1}{s}\right)\right]^{-\frac{Q}{\alpha}},
\end{equation*}
for $s>0$
(see \eqref{eq:SC31}).
For any $\lambda\in(0,1)$ and $\epsilon \in(0,1)$, set 
\begin{equation*}
\overline{T}_\lambda := \left\{
	\begin{aligned}
	&\exp(\epsilon \lambda^{-(p-1)}),\quad && \mbox{if} \quad A>Q, \\
	&\exp(\epsilon \lambda^{-\frac{p-1}{p}}),\quad && \mbox{if} \quad A=Q.\\	
	\end{aligned}
	\right.
\end{equation*}
We shall prove that problem \eqref{eq:Fujita} with $\mu = \lambda \phi$ possesses a solution in $\mathbb{H}^N\times[0,\overline{T}_\lambda)$.
Let $L\ge e$ be  such that
\[
s [\log (L+s)]^{-\frac{Q}{\alpha}} \quad \mbox{is increasing in} \,\, [0,\infty).
\]
Then we see that $\Psi(s) \sim s [\log (L+s)]^{{Q/\alpha}}$ and  $\Psi^{-1}(s) \sim s [\log (e+s)]^{-{Q/\alpha}} \sim s [\log (L+s)]^{-{Q/\alpha}}$
for all $s>0$.

We consider the case of $A>Q$. By \eqref{eq:6.1} and \eqref{eq:6.2}, we have
\begin{equation}
\label{eq:6.5}
\begin{split}
& \sup_{\zeta\in\mathbb{H}^N}  \Psi^{-1} \left[\fint_{B(\zeta,\sigma)} \Psi\left(\overline{T}_\lambda^\frac{1}{p-1} \lambda \phi(\eta) \right)\, \di \eta\right]\\
&\le \Psi^{-1} \left[\fint_{B(0,\sigma)} \Psi\left(\overline{T}_\lambda^\frac{Q}{\alpha} \lambda (1+|\eta|_{\mathbb{H}^N})^{-A}\right)\, \di \eta\right]\\
&\le \Psi^{-1} \left[C\sigma^{-Q}\int_0^{\sigma^2}\Psi\left(\overline{T}_\lambda^\frac{Q}{\alpha} \lambda (1+\sqrt{r})^{-A}\right)  r^N \, \di r\right]\\
\end{split}
\end{equation}
for all $\sigma>0$.
Since
\begin{equation}
\label{eq:6.6}
\log \left[L + \overline{T}_\lambda^\frac{Q}{\alpha} \lambda (1+\sqrt{r})^{-A}\right]
\le  \log (C \overline{T}_\lambda^\frac{Q}{\alpha})
\lesi\epsilon \lambda^{-\frac{\alpha}{Q}}
\end{equation}
for sufficiently small $\lambda>0$, we have
\begin{equation}
\label{eq:6.7}
\begin{split}
&\sigma^{-Q}\int_0^{\sigma^2}\Psi\left(\overline{T}_\lambda^\frac{Q}{\alpha} \lambda (1+\sqrt{r})^{-A}\right)  r^N \, \di r\\
&\lesi  \epsilon^\frac{Q}{\alpha} \overline{T}_\lambda^\frac{Q}{\alpha} \sigma^{-Q}\int_0^{\sigma^2} (1+\sqrt{r})^{-A}  r^N \, \di r
\lesi  \epsilon^\frac{Q}{\alpha} \overline{T}_\lambda^\frac{Q}{\alpha} \sigma^{-Q}
\end{split}
\end{equation}
for all $0<\sigma <  \overline{T}_\lambda^{1/\alpha}$ and sufficiently small $\lambda>0$.
This together with \eqref{eq:6.5} implies that
\begin{equation*}
\begin{split}
&\sup_{\zeta\in\mathbb{H}^N}  \Psi^{-1} \left[\fint_{B(\zeta,\sigma)} \Psi\left(\overline{T}_\lambda^\frac{1}{p-1} \lambda \phi(\eta) \right)\, \di \eta\right]\\
&\le  \Psi^{-1}( C \epsilon^\frac{Q}{\alpha} \overline{T}_\lambda^\frac{Q}{\alpha} \sigma^{-Q})
\lesi  \epsilon^\frac{Q}{\alpha} \overline{T}_\lambda^\frac{Q}{\alpha} \sigma^{-Q} \left[\log\left(L +  C \epsilon^\frac{Q}{\alpha} \overline{T}_\lambda^\frac{Q}{\alpha} \sigma^{-Q}\right)\right]^{-\frac{Q}{\alpha}}\\
&\lesi \epsilon^\frac{Q}{\alpha}\overline{T}_\lambda^\frac{Q}{\alpha} \sigma^{-Q} 
\left[\log\left(L +  \frac{\overline{T}_\lambda^{1/\alpha}}{\sigma}\right)\right]^{-\frac{Q}{\alpha}}
=   \epsilon^\frac{Q}{\alpha}  \rho(\sigma \overline{T}_\lambda^{-\frac{1}{\alpha}})\\
\end{split}
\end{equation*}
for all $0<\sigma <  \overline{T}_\lambda^{1/\alpha}$ and sufficiently small $\lambda>0$.
Therefore, taking a sufficiently small $\epsilon>0$ if necessary, we apply  Theorem~\ref{Theorem:SC3}
to see that  problem \eqref{eq:Fujita} with $\mu = \lambda \phi$ possesses a solution in $\mathbb{H}^N\times[0,T_\lambda)$ and 
\[
T(\lambda\phi) \ge \overline{T}_\lambda =\exp(\epsilon \lambda^{-(p-1)})
\]
for all sufficiently small $\lambda>0$.

We consider the case of $A=Q$. 
Similarly to \eqref{eq:6.6} and \eqref{eq:6.7}, we have
\begin{equation*}
\begin{split}
\sigma^{-Q}\int_0^{\sigma^2}\Psi\left(\overline{T}_\lambda^\frac{Q}{\alpha} \lambda (1+\sqrt{r})^{-A}\right)  r^N \, \di r
&\lesi \lambda \overline{T}_\lambda^\frac{Q}{\alpha} [\log \overline{T}_\lambda ]^\frac{Q}{\alpha}\sigma^{-Q}\int_0^{\sigma^2} (1+\sqrt{r})^{-A}  r^N \, \di r\\
&\lesi \lambda \overline{T}_\lambda^\frac{Q}{\alpha} [\log \overline{T}_\lambda ]^{\frac{Q}{\alpha}+1}\sigma^{-Q}
\lesi \epsilon^{\frac{Q+\alpha}{\alpha}} \overline{T}_\lambda^\frac{Q}{\alpha}\sigma^{-Q}
\end{split}
\end{equation*}
for all $0<\sigma <  \overline{T}_\lambda^{1/\alpha}$ and sufficiently small $\lambda>0$.
Then we apply the same argument as in the case of $A>Q$ to see that
\[
T(\lambda \phi) \ge \overline{T}_\lambda = \exp(\epsilon \lambda^{-\frac{p-1}{p}})
\]
for all sufficiently small $\lambda >0$. Thus, assertion (i) follows.

We shall prove assertion (ii) in the case where $p\ge p_{\alpha,Q}$ and $A<\alpha/(p-1)$.
It follows that $A<\alpha/(p-1) \le Q$.
For $\lambda\in(0,1)$ and $\epsilon\in(0,1)$, set
\[
\tilde{T}_\lambda := \epsilon \lambda^{-\left(\frac{1}{p-1}-\frac{A}{\alpha}\right)^{-1}}.
\]
Let $\theta>1$ be  such that $A\theta < Q$.
Then by \eqref{eq:6.1} and \eqref{eq:6.2} we have
\begin{equation}
\label{eq:6.8}
\left(\fint_{B(\eta,\sigma)} (\lambda \phi(\zeta))^\theta \, \di \zeta\right)^\frac{1}{\theta}
\le \lambda \left(\fint_{B(0,\sigma)} (1+|\zeta|_{\mathbb{H}^N})^{-A\theta}\right)^\frac{1}{\theta}
\lesi\lambda \sigma^{-A}
\end{equation}
for all $\zeta\in \mathbb{H}^N$ and $0<\sigma < \tilde{T}_\lambda^{1/\alpha}$.
On the other hand, it follows that
\begin{equation}
\label{eq:6.9}
\lambda\sigma^{-A} = \sigma^{-\frac{\alpha}{p-1}} \cdot \lambda \sigma^{\frac{\alpha}{p-1}-A}
\le \sigma^{-\frac{\alpha}{p-1}} \cdot \lambda \tilde{T}_\lambda^{\frac{1}{p-1}-\frac{A}{\alpha}}
=\epsilon^{\frac{1}{p-1}- \frac{A}{\alpha}} \sigma^{-\frac{\alpha}{p-1}}
\end{equation}
for all $0<\sigma < \tilde{T}_\lambda^{1/\alpha}$.
Taking a sufficiently small $\epsilon>0$ if necessary, by \eqref{eq:6.8} and \eqref{eq:6.9} we obtain \eqref{eq:SC2}
for all $0<\sigma < \tilde{T}_\lambda^{1/\alpha}$.
Then it follows from Theorem~\ref{mainthm 4} that  problem \eqref{eq:Fujita} with $\mu = \lambda \phi$ possesses a solution in $\mathbb{H}^N\times[0,\tilde{T}_\lambda)$
and
\[
T_\lambda \ge \tilde{T}_\lambda = \epsilon \lambda^{-\left(\frac{1}{p-1}-\frac{A}{\alpha}\right)^{-1}}.
\]
Thus, assertion (ii) in the case where $p\ge p_{\alpha,Q}$ and $A<\alpha/(p-1)$ follows.

It remains to prove assertion (ii) in the case of $1<p<p_{\alpha,Q}$. For $\lambda\in(0,1)$ and  $\epsilon\in(0,1)$, set
\[
\hat{T}_\lambda := \left\{
	\begin{aligned}
	&\epsilon \lambda^{-\left( \frac{1}{p-1} -\frac{1}{\alpha} \min\{A,Q\}\right)^{-1}}\quad && \mbox{if} \quad A\neq Q, \\
	&\epsilon \left( \frac{\lambda^{-1}}{\log(\lambda^{-1})}\right)^{\left(\frac{1}{p-1}-\frac{Q}{\alpha}\right)^{-1}}\quad && \mbox{if} \quad A=Q. \\	
	\end{aligned}
	\right.
\]
It follows from \eqref{eq:6.1} that
\[
\sup_{\zeta\in \mathbb{H}^N} \int_{B(\zeta,\hat{T}_\lambda^\frac{1}{\alpha} )} \lambda \phi(\eta) \, \di\eta
\lesi\left\{
	\begin{aligned}
	&\lambda \quad && \mbox{if} \quad A> Q, \\
	&\lambda \log (e+ \hat{T}_\lambda^\frac{1}{\alpha}) \quad && \mbox{if} \quad A=Q, \\
	&\lambda  \hat{T}_\lambda^\frac{Q-A}{\alpha}\quad && \mbox{if} \quad 0<A<Q. \\	
	\end{aligned}
	\right.
\]
Then taking a sufficiently small $\epsilon>0$ if necessary, we obtain 
\[
\sup_{\zeta\in \mathbb{H}^N} \int_{B(\zeta,\hat{T}_\lambda^\frac{1}{\alpha} )} \lambda \phi(\eta) \, \di\eta
\le \gamma_C \hat{T}_\lambda^{\frac{Q}{\alpha}-\frac{1}{p-1}}
\]
for all sufficiently small $\lambda>0$, where $\gamma_C>0$ is as in Theorem~\ref{mainthm 3}.
Then it follows from Theorem~\ref{mainthm 3} that  problem \eqref{eq:Fujita} with $\mu = \lambda \phi$ possesses a solution in $\mathbb{H}^N\times[0,\tilde{T}_\lambda)$
and
\[
T_\lambda \ge \hat{T}_\lambda = \left\{
	\begin{aligned}
	&\epsilon \lambda^{-\left( \frac{1}{p-1} -\frac{1}{\alpha} \min\{A,Q\}\right)^{-1}}\quad && \mbox{if} \quad A\neq Q, \\
	&\epsilon \left( \frac{\lambda^{-1}}{\log(\lambda^{-1})}\right)^{\left(\frac{1}{p-1}-\frac{Q}{\alpha}\right)^{-1}}\quad && \mbox{if} \quad A=Q. \\	
	\end{aligned}
	\right.
\]
Thus, the proof is complete.
\end{proof}

At the end of this section, we describe  summaries of Theorems~\ref{Theorem F} and \ref{Theorem G} in tables.
The following  tables show the behavior of the life span $T_\lambda:= T(\lambda \phi)$ as $\lambda \to0^+$, where 
\[
\phi(\eta) := (1+ |\eta|_{\mathbb{H}^N})^{-A}\quad \mbox{and} \quad A>0.
\]
If it is marked with {\dag}, it is already shown in \cite{GP21} in the case of $\alpha=2$.
\begin{table}[ht]
 \begin{center}
   \caption{The behavior of $T_\lambda$ in the case of $A \neq Q$ (as $\lambda\to0^+$).}
  \begin{tabular}{|l||r|r|r|}
  \hline
    \backslashbox{$p$}{$A$}		&$A<\frac{\alpha}{p-1}$ 	& $A=\frac{\alpha}{p-1}$ 	& $A>\frac{\alpha}{p-1}$ \\	\hline\hline
    $p<p_{\alpha,Q}$ 			& $T_\lambda \sim 	\lambda^{-\left( \frac{1}{p-1} -\frac{1}{\alpha} \min\{A,Q\}\right)^{-1}}$				&$T_\lambda \sim 	\lambda^{-\left( \frac{1}{p-1} -\frac{Q}{\alpha}\right)^{-1}}$ 												&{\dag} $T_\lambda \sim 	\lambda^{-\left( \frac{1}{p-1} -\frac{Q}{\alpha}\right)^{-1}}$\\\hline
    $p= p_{\alpha,Q}$ 			&$T_\lambda \sim 	\lambda^{-\left( \frac{1}{p-1} -\frac{A}{\alpha}\right)^{-1}}$							& ($A=Q$, see Table 2)					
    								&{\dag} $\log T_\lambda \sim \lambda^{-(p-1)}$\\	\hline
    $p>p_{\alpha,Q}$ 			& $T_\lambda \sim 	\lambda^{-\left( \frac{1}{p-1} -\frac{A}{\alpha}\right)^{-1}}$							&{\dag} $T_\lambda =\infty$				
    								&$T_\lambda =\infty$\\\hline
  \end{tabular}
 \end{center}
\end{table}

\begin{table}[ht]
 \begin{center}
   \caption{The behavior of $T_\lambda$ in the case of $A = Q$ (as $\lambda\to0^+$).}
  \begin{tabular}{|l||r|r|r|}
  \hline
    \backslashbox{$p$}{$A$}			& $A=Q$ 	 \\	\hline\hline
    $p<p_{\alpha,Q}$ 				&$T_\lambda \sim 	\left( \frac{\lambda^{-1}}{\log(\lambda^{-1})}\right)^{\left(\frac{1}{p-1}-\frac{Q}{\alpha}\right)^{-1}}$\\\hline
    $p= p_{\alpha,Q}$ 				&$\log T_\lambda \sim \lambda^{-\frac{p-1}{p}}$\\\hline
    $p>p_{\alpha,Q}$ 				&$T_\lambda =\infty$\\\hline
  \end{tabular}
 \end{center}
\end{table}

\newpage

\section*{Acknowledgments.}

The first-named author was supported by the research grant ARC DP220100285 from the Australian Research Council.

\section*{Statements and Declarations.}
\begin{itemize}
    \item The second-named author did not receive support from any organization for the submitted work.
    \item The authors have no relevant financial or non-financial interests to disclose.
\end{itemize}
\begin{bibdiv}
\begin{biblist}		
\bib{A01}{article}{
   author={D'Ambrosio, Lorenzo},
   title={Critical degenerate inequalities on the Heisenberg group},
   journal={Manuscripta Math.},
   volume={106},
   date={2001},
   number={4},
   pages={519--536},
   issn={0025-2611},
   review={\MR{1875346}},
   doi={10.1007/s229-001-8031-2},
}
\bib{AJS15}{article}{
   author={Azman, Ibtehal},
   author={Jleli, Mohamed},
   author={Samet, Bessem},
   title={Blow-up of solutions to parabolic inequalities in the Heisenberg
   group},
   journal={Electron. J. Differential Equations},
   date={2015},
   pages={No. 167, 9},
   review={\MR{3375998}},
}
\bib{BP85}{article}{
   author={Baras, Pierre},
   author={Pierre, Michel},
   title={Crit\`ere d'existence de solutions positives pour des \'equations
   semi-lin\'eaires non monotones},
   language={French, with English summary},
   journal={Ann. Inst. H. Poincar\'e{} Anal. Non Lin\'eaire},
   volume={2},
   date={1985},
   number={3},
   pages={185--212},
   issn={0294-1449},
   review={\MR{0797270}},
}
\bib{BHQ24}{article}{
   author={Bui, The Anh},
   author={Hong, Qing},
   author={Hu, Guorong},
   title={Generalized {Schr{\"o}dinger} operators on the {Heisenberg} group and {Hardy} spaces},
   journal={J. Funct. Anal.},
   volume={286},
   date={2024},
   number={10},
   pages={53},
   issn={0022-1236},
   review={\MR{1875346}},
   doi={10.1007/s229-001-8031-2},
}
\bib{Cohn}{book}{
   author={Cohn, Donald L.},
   title={Measure theory},
   series={Birkh\"auser Advanced Texts: Basler Lehrb\"ucher. [Birkh\"auser
   Advanced Texts: Basel Textbooks]},
   edition={2},
   publisher={Birkh\"auser/Springer, New York},
   date={2013},
   pages={xxi+457},
   isbn={978-1-4614-6955-1},
   isbn={978-1-4614-6956-8},
   review={\MR{3098996}},
   doi={10.1007/978-1-4614-6956-8},
}

\bib{CD}{article}{
   author={Coulhon, Thierry},
   author={Duong, Xuan Thinh},
   title={Maximal regularity and kernel bounds: observations on a theorem by
   Hieber and Pr\"uss},
   journal={Adv. Differential Equations},
   volume={5},
   date={2000},
   number={1-3},
   pages={343--368},
   issn={1079-9389},
   review={\MR{1734546}},
}
\bib{Da}{book}{
   author={Davies, E. B.},
   title={Heat kernels and spectral theory},
   series={Cambridge Tracts in Mathematics},
   volume={92},
   publisher={Cambridge University Press, Cambridge},
   date={1990},
   pages={x+197},
   isbn={0-521-40997-7},
   review={\MR{1103113}},
}
\bib{FRT24}{article}{
   author={Fino, Ahmad Z.},
   author={Ruzhansky, Michael},
   author={Torebek, Berikbol T.},
   title={Fujita-type results for the degenerate parabolic equations on the
   Heisenberg groups},
   journal={NoDEA Nonlinear Differential Equations Appl.},
   volume={31},
   date={2024},
   number={2},
   pages={Paper No. 19, 37},
   issn={1021-9722},
   review={\MR{4691932}},
   doi={10.1007/s00030-023-00907-2},
}
\bib{Folland}{article}{
   author={Folland, G. B.},
   title={Subelliptic estimates and function spaces on nilpotent Lie groups},
   journal={Ark. Mat.},
   volume={13},
   date={1975},
   number={2},
   pages={161--207},
   issn={0004-2080},
   review={\MR{0494315}},
   doi={10.1007/BF02386204},
}
\bib{FS}{book}{
   author={Folland, G. B.},
   author={Stein, Elias M.},
   title={Hardy spaces on homogeneous groups},
   series={Mathematical Notes},
   volume={28},
   publisher={Princeton University Press, Princeton, NJ; University of Tokyo
   Press, Tokyo},
   date={1982},
   pages={xii+285},
   isbn={0-691-08310-X},
   review={\MR{0657581}},
}
\bib{FHIL23}{article}{
   author={Fujishima, Yohei},
   author={Hisa, Kotaro},
   author={Ishige, Kazuhiro},
   author={Laister, Robert},
   title={Solvability of superlinear fractional parabolic equations},
   journal={J. Evol. Equ.},
   volume={23},
   date={2023},
   number={1},
   pages={Paper No. 4, 38},
   issn={1424-3199},
   review={\MR{4520263}},
   doi={10.1007/s00028-022-00853-z},
}
\bib{FHIL24}{article}{
   author={Fujishima, Yohei},
   author={Hisa, Kotaro},
   author={Ishige, Kazuhiro},
   author={Laister, Robert},
   title={Local solvability and dilation-critical singularities of
   supercritical fractional heat equations},
   language={English, with English and French summaries},
   journal={J. Math. Pures Appl. (9)},
   volume={186},
   date={2024},
   pages={150--175},
   issn={0021-7824},
   review={\MR{4745503}},
   doi={10.1016/j.matpur.2024.04.005},
}
\bib{F}{article}{
   author={Fujita, Hiroshi},
   title={On the blowing up of solutions of the Cauchy problem for
   $u\sb{t}=\Delta u+u\sp{1+\alpha }$},
   journal={J. Fac. Sci. Univ. Tokyo Sect. I},
   volume={13},
   date={1966},
   pages={109--124 (1966)},
   issn={0368-2269},
   review={\MR{0214914}},
}
\bib{GP21}{article}{
   author={Georgiev, Vladimir},
   author={Palmieri, Alessandro},
   title={Lifespan estimates for local in time solutions to the semilinear
   heat equation on the Heisenberg group},
   journal={Ann. Mat. Pura Appl. (4)},
   volume={200},
   date={2021},
   number={3},
   pages={999--1032},
   issn={0373-3114},
  review={\MR{4242116}},
   doi={10.1007/s10231-020-01023-z},
}
\bib{G}{article}{
   author={Grigor'yan, Alexander},
   title={Heat kernels and function theory on metric measure spaces},
   conference={
      title={Heat kernels and analysis on manifolds, graphs, and metric
      spaces},
      address={Paris},
      date={2002},
   },
   book={
      series={Contemp. Math.},
      volume={338},
      publisher={Amer. Math. Soc., Providence, RI},
   },
   isbn={0-8218-3383-9},
   date={2003},
   pages={143--172},
   review={\MR{2039954}},
   doi={10.1090/conm/338/06073},
}
\bib{H05}{article}{
   author={Han, Junqiang},
   title={Degenerate evolution inequalities on groups of Heisenberg type},
   journal={J. Partial Differential Equations},
   volume={18},
   date={2005},
   number={4},
   pages={341--354},
   issn={1000-940X},
   review={\MR{2188233}},
}
\bib{H73}{article}{
   author={Hayakawa, Kantaro},
   title={On nonexistence of global solutions of some semilinear parabolic
   differential equations},
   journal={Proc. Japan Acad.},
   volume={49},
   date={1973},
   pages={503--505},
   issn={0021-4280},
   review={\MR{0338569}},
}
\bib{HS}{article}{
   author={Hebisch, W.},
   author={Saloff-Coste, L.},
   title={On the relation between elliptic and parabolic Harnack
   inequalities},
   language={English, with English and French summaries},
   journal={Ann. Inst. Fourier (Grenoble)},
   volume={51},
   date={2001},
   number={5},
   pages={1437--1481},
   issn={0373-0956},
   review={\MR{1860672}},
   doi={10.5802/aif.1861},
}
\bib{H24}{article}{
   author={Hisa, Kotaro},
   title={Optimal singularities of initial data of a fractional semilinear heat equation in open sets},
   journal={arXiv: 2312.10969.},
}
\bib{HI18}{article}{
   author={Hisa, Kotaro},
   author={Ishige, Kazuhiro},
   title={Existence of solutions for a fractional semilinear parabolic
   equation with singular initial data},
   journal={Nonlinear Anal.},
   volume={175},
   date={2018},
   pages={108--132},
   issn={0362-546X},
   review={\MR{3830724}},
   doi={10.1016/j.na.2018.05.011},
}
\bib{HIT23}{article}{
   author={Hisa, Kotaro},
   author={Ishige, Kazuhiro},
   author={Takahashi, Jin},
   title={Initial traces and solvability for a semilinear heat equation on a
   half space of $\mathbb{R}^N$},
   journal={Trans. Amer. Math. Soc.},
   volume={376},
   date={2023},
   number={8},
   pages={5731--5773},
   issn={0002-9947},
   review={\MR{4630758}},
   doi={10.1090/tran/8922},
}
\bib{HS24}{article}{
   author={Hisa, Kotaro},
   author={Sier\.z\k{e}ga, Miko\l aj},
   title={Existence and nonexistence of solutions to the Hardy parabolic
   equation},
   journal={Funkcial. Ekvac.},
   volume={67},
   date={2024},
   number={2},
   pages={149--174},
   issn={0532-8721},
   review={\MR{4787124}},
}
\bib{HT21}{article}{
   author={Hisa, Kotaro},
   author={Takahashi, Jin},
   title={Optimal singularities of initial data for solvability of the Hardy
   parabolic equation},
   journal={J. Differential Equations},
   volume={296},
   date={2021},
   pages={822--848},
   issn={0022-0396},
   review={\MR{4278116}},
   doi={10.1016/j.jde.2021.06.011},
}
\bib{IS19}{article}{
   author={Ikeda, Masahiro},
   author={Sobajima, Motohiro},
   title={Remark on upper bound for lifespan of solutions to semilinear
   evolution equations in a two-dimensional exterior domain},
   journal={J. Math. Anal. Appl.},
   volume={470},
   date={2019},
   number={1},
   pages={318--326},
   issn={0022-247X},
   review={\MR{3865139}},
   doi={10.1016/j.jmaa.2018.10.004},
}	

\bib{IKO20}{article}{
   author={Ishige, Kazuhiro},
   author={Kawakami, Tatsuki},
   author={Okabe, Shinya},
   title={Existence of solutions for a higher-order semilinear parabolic
   equation with singular initial data},
   journal={Ann. Inst. H. Poincar\'e{} C Anal. Non Lin\'eaire},
   volume={37},
   date={2020},
   number={5},
   pages={1185--1209},
   issn={0294-1449},
   review={\MR{4138231}},
   doi={10.1016/j.anihpc.2020.04.002},
}		
\bib{IKS16}{article}{
   author={Ishige, Kazuhiro},
   author={Kawakami, Tatsuki},
   author={Sier\.z\c ega, Miko\l aj},
   title={Supersolutions for a class of nonlinear parabolic systems},
   journal={J. Differential Equations},
   volume={260},
   date={2016},
   number={7},
   pages={6084--6107},
   issn={0022-0396},
   review={\MR{3456827}},
   doi={10.1016/j.jde.2015.12.031},
}
\bib{JS}{article}{
   author={Jerison, David S.},
   author={S\'anchez-Calle, Antonio},
   title={Estimates for the heat kernel for a sum of squares of vector
   fields},
   journal={Indiana Univ. Math. J.},
   volume={35},
   date={1986},
   number={4},
   pages={835--854},
   issn={0022-2518},
   review={\MR{0865430}},
   doi={10.1512/iumj.1986.35.35043},
}
\bib{JKS16}{article}{
   author={Jleli, Mohamed},
   author={Kirane, Mokhtar},
   author={Samet, Bessem},
   title={A Fujita-type theorem for a multitime evolutionary $p$-Laplace
   inequality in the Heisenberg group},
   journal={Electron. J. Differential Equations},
   date={2016},
   pages={Paper No. 303, 8},
   review={\MR{3604748}},
}
\bib{KST77}{article}{
   author={Kobayashi, Kusuo},
   author={Sirao, Tunekiti},
   author={Tanaka, Hiroshi},
   title={On the growing up problem for semilinear heat equations},
   journal={J. Math. Soc. Japan},
   volume={29},
   date={1977},
   number={3},
   pages={407--424},
   issn={0025-5645},
   review={\MR{0450783}},
   doi={10.2969/jmsj/02930407},
}
\bib{KY94}{article}{
   author={Kozono, Hideo},
   author={Yamazaki, Masao},
   title={Semilinear heat equations and the Navier-Stokes equation with
   distributions in new function spaces as initial data},
   journal={Comm. Partial Differential Equations},
   volume={19},
   date={1994},
   number={5-6},
   pages={959--1014},
   issn={0360-5302},
   review={\MR{1274547}},
   doi={10.1080/03605309408821042},
}	
\bib{LS21}{article}{
   author={Laister, Robert},
   author={Sier\.z\polhk ega, Miko\l aj},
   title={A blow-up dichotomy for semilinear fractional heat equations},
   journal={Math. Ann.},
   volume={381},
   date={2021},
   number={1-2},
   pages={75--90},
   issn={0025-5831},
   review={\MR{4322607}},
   doi={10.1007/s00208-020-02078-2},
}	
\bib{LN92}{article}{
   author={Lee, Tzong-Yow},
   author={Ni, Wei-Ming},
   title={Global existence, large time behavior and life span of solutions
   of a semilinear parabolic Cauchy problem},
   journal={Trans. Amer. Math. Soc.},
   volume={333},
   date={1992},
   number={1},
   pages={365--378},
   issn={0002-9947},
   review={\MR{1057781}},
   doi={10.2307/2154114},
}
\bib{MPS}{article}{
   author={Maalaoui, Ali},
   author={Pinamonti, Andrea},
   author={Speight, Gareth},
   title={Function spaces via fractional Poisson kernel on Carnot groups and
   applications},
   journal={J. Anal. Math.},
   volume={149},
   date={2023},
   number={2},
   pages={485--527},
   issn={0021-7670},
   review={\MR{4594398}},
   doi={10.1007/s11854-022-0255-y},
}
\bib{MP01}{article}{
   author={Mitidieri, \`E.},
   author={Pokhozhaev, S. I.},
   title={A priori estimates and the absence of solutions of nonlinear
   partial differential equations and inequalities},
   language={Russian, with English and Russian summaries},
   journal={Tr. Mat. Inst. Steklova},
   volume={234},
   date={2001},
   pages={1--384},
   issn={0371-9685},
   translation={
      journal={Proc. Steklov Inst. Math.},
      date={2001},
      number={3(234)},
      pages={1--362},
      issn={0081-5438},
   },
   review={\MR{1879326}},
}
\bib{P98}{article}{
   author={Pascucci, Andrea},
   title={Semilinear equations on nilpotent Lie groups: global existence and
   blow-up of solutions},
   journal={Matematiche (Catania)},
   volume={53},
   date={1998},
   number={2},
   pages={345--357 (1999)},
   issn={0373-3505},
   review={\MR{1710767}},
}
\bib{P99}{article}{
   author={Pascucci, Andrea},
   title={Fujita type results for a class of degenerate parabolic operators},
   journal={Adv. Differential Equations},
   volume={4},
   date={1999},
   number={5},
   pages={755--776},
   issn={1079-9389},
   review={\MR{1696353}},
}
\bib{PV00}{article}{
   author={Pohozaev, Stanislav},
   author={V\'eron, Laurent},
   title={Nonexistence results of solutions of semilinear differential
   inequalities on the Heisenberg group},
   journal={Manuscripta Math.},
   volume={102},
   date={2000},
   number={1},
   pages={85--99},
   issn={0025-2611},
   review={\MR{1771229}},
   doi={10.1007/PL00005851},
}
\bib{QS19}{book}{
   author={Quittner, Pavol},
   author={Souplet, Philippe},
   title={Superlinear parabolic problems},
   series={Birkh\"auser Advanced Texts: Basler Lehrb\"ucher. [Birkh\"auser
   Advanced Texts: Basel Textbooks]},
   edition={2},
   note={Blow-up, global existence and steady states},
   publisher={Birkh\"auser/Springer, Cham},
   date={2019},
   pages={xvi+725},
   isbn={978-3-030-18220-5},
   isbn={978-3-030-18222-9},
   review={\MR{3967048}},
   doi={10.1007/978-3-030-18222-9},
}
\bib{RS13}{article}{
   author={Robinson, James C.},
   author={Sier\.z\polhk ega, Miko\l aj},
   title={Supersolutions for a class of semilinear heat equations},
   journal={Rev. Mat. Complut.},
   volume={26},
   date={2013},
   number={2},
   pages={341--360},
   issn={1139-1138},
   review={\MR{3068603}},
   doi={10.1007/s13163-012-0108-9},
}
\bib{RY22}{article}{
   author={Ruzhansky, Michael},
   author={Yessirkegenov, Nurgissa},
   title={Existence and non-existence of global solutions for semilinear
   heat equations and inequalities on sub-Riemannian manifolds, and Fujita
   exponent on unimodular Lie groups},
   journal={J. Differential Equations},
   volume={308},
   date={2022},
   pages={455--473},
   issn={0022-0396},
   review={\MR{4340784}},
   doi={10.1016/j.jde.2021.10.058},
}
\bib{Simon}{book}{
   author={Simon, Leon},
   title={Lectures on geometric measure theory},
   series={Proceedings of the Centre for Mathematical Analysis, Australian
   National University},
   volume={3},
   publisher={Australian National University, Centre for Mathematical
   Analysis, Canberra},
   date={1983},
   pages={vii+272},
   isbn={0-86784-429-9},
   review={\MR{0756417}},
}
\bib{St}{book}{
   author={Strauss, Walter A.},
   title={Partial differential equations},
   note={An introduction},
   publisher={John Wiley \& Sons, Inc., New York},
   date={1992},
   pages={xii+425},
   isbn={0-471-54868-5},
   review={\MR{1159712}},
}
\bib{S75}{article}{
   author={Sugitani, Sadao},
   title={On nonexistence of global solutions for some nonlinear integral
   equations},
   journal={Osaka Math. J.},
   volume={12},
   date={1975},
   pages={45--51},
   issn={0388-0699},
   review={\MR{0470493}},
}
\bib{Y21}{article}{
   author={Yang, Zhipeng},
   title={Fujita exponent and nonexistence result for the Rockland heat
   equation},
   journal={Appl. Math. Lett.},
   volume={121},
   date={2021},
   pages={Paper No. 107386, 6},
   issn={0893-9659},
   review={\MR{4260530}},
   doi={10.1016/j.aml.2021.107386},
}
\bib{Z98}{article}{
   author={Zhang, Qi S.},
   title={The critical exponent of a reaction diffusion equation on some {Lie} groups},
   journal={Math. Z.},
   volume={228},
   date={1998},
   number={1},
   pages={51--72},
   issn={0025-5874},
   review={\MR{4630758}},
   doi={10.1007/PL00004602},
}
\bib{Zhang}{article}{
   author={Zhang, Qi S.},
   title={A sharp comparison result concerning Schr\"odinger heat kernels},
   journal={Bull. London Math. Soc.},
   volume={35},
   date={2003},
   number={4},
   pages={461--472},
   issn={0024-6093},
   review={\MR{1978999}},
   doi={10.1112/S002460930300211X},
}

\end{biblist}
\end{bibdiv}  
\end{document}